\newtheorem{thm}{Theorem}
\newtheorem{prop}{Proposition}
\newtheorem{lem}{Lemma}
\newtheorem{cor}{Corollary}
\theoremstyle{remark}
\newtheorem{rem}{Remark}
\theoremstyle{definition}
\theoremstyle{axiom}
\newtheorem{defn}{Definition}
\newtheorem{ex}{Example}
\newcommand{\C}{\mathbb{C}}
\newcommand{\R}{\mathbb{ R}}
\newcommand{\Z}{\mathbb{ Z}}
\title{A resolution of  singularities\\  for the    orbit spaces $G_{n,2}/T^n$ }
\author{Victor M.~Buchstaber and Svjetlana Terzi\'c}
\begin{document}

\maketitle

\begin{abstract}
The problem of the description of the orbit space $X_{n} = G_{n,2}/T^n$ for the standard action of the torus $T^n$ on a complex  Grassmann 
manifold $G_{n,2}$ is widely known and it appears in diversity of mathematical questions. A point $x\in X_{n}$ is said to be a critical point if the stabilizer of its corresponding orbit is nontrivial. In this paper, the notion of singular points  of $X_n$ is introduced  which opened the new approach to this problem. It  is showed that for $n>4$ the set of critical points $\text{Crit}X_n$ belongs to our  set of singular points $\text{Sing}X_{n}$, while  the case $n=4$ is somewhat special for which $\text{Sing}X_4\subset \text{Crit}X_4$,  but there are  critical points which are not singular. 

The central result of this paper is the construction of the smooth manifold $U_n$ with corners, $\dim U_n = \dim X_n$ and an explicit description of the projection $p_{n} : U_{n}\to X_{n}$ which in the defined sense resolve all singular points of the space $X_n$. Thus, we obtain the  description of the orbit space $G_{n,2}/T^n$  combinatorial structure.   Moreover, the $T^n$-action on $G_{n,2}$ is a seminal example of complexity $(n-3)$ - action. Our results demonstrate the method for general description of orbit spaces for torus actions of positive complexity.
\footnote{MSC 2000:  57S25, 57N65, 53D20,  14M25, 52B11, 14B05; keywords:  Grassmann manifold, torus action,  chamber decomposition, orbit space, universal space  of parameters}

\end{abstract}

\section{Introduction}
\subsection{Formulation and the history of the problem}
The studying of the  orbit space $X_n= G_{n,2}/T^n$ of  a complex Grassmann manifold $G_{n,2}$ of the two-dimensional complex planes in $\C ^{n}$ by the standard action of the compact torus $T^n$ is of wide mathematical interest for a while from many aspects:  of algebraic topology, algebraic geometry,  theory of group actions, matroid theory, combinatorics.   This interest  was stimulated  by the works   
Gel'fand-Serganova~\cite{GS}, Goresky-MacPherson~\cite{GM}, Gel'fand-MacPherson~\cite{GeM} in which the $(\C ^{\ast})^{n}$-action on  general Grassmannian $G_{n,k}$, $1< k<n-1$  being  an extension of $T^n$-action, was studied. The results of these papers suggest that the case $k=2$ needs to be studied taking into account the specialty of the spaces $G_{n,2}$.   A little later Kapranov~\cite{Kap} related  $T^n$-action on $G_{n,2}$ with the notion of Chow quotient from algebraic geometry for which he showed  to be isomorphic  to  the Grotendick-Knudsen moduli space $\overline{M}_{0,n}$ of stable  $n$ -  pointed rational  curves of genus zero. These works attracted a lot attention which resulted in series  of papers on related subjects,~\cite{Keel},~\cite{KeelTev},~\cite{LL} etc. 

The complex projective space $\C P^{n-1} = G_{n,1}$ with the standard action of the algebraic torus $(\C ^{\ast})^{n}$ and the induced action of the compact torus $T^n\subset (\C ^{\ast})^{n}$ is   the key object in toric geometry and toric topology respectively, see~\cite{BP}. It is well known that the orbit space 
$G_{n,1}/T^n$ can be identified with the simplex $\Delta ^{n-1}$ which is the  standard example of a smooth manifold with corners. Our studying of an orbit space $X_n = G_{n,2}/T^n$  with its canonical moment map $G_{n,2}\to X_n\to \Delta _{n,2}$, where $\Delta _{n,2}$ is a  hypersimplex has been motivated by the natural problem of  extension of the methods of  toric topology to the case of torus actions of positive complexity.  More recently in~\cite{BT-1} and~\cite{BT-2} the orbit spaces  $X_4=G_{4,2}/T^4 $ and $X_5=G_{5,2}/T^5$ are explicitly described, while an extensive general   study of $T^n$-action on $G_{n,k}$ for $k\geq 2$  is done in~\cite{BT-2},~\cite{BT-2nk} in the context of the theory of $(2n,k)$-manifolds. 

 It is proved in~\cite{BT-1} that the space $X_4$  is  homeomorphic to the sphere $S^5$. According to~\cite{S} the sphere  $S^5$  has a unique smooth structure.  Nevertheless, the space  $X_4$ is a topological  sphere, that is it is not possible to introduce a smooth structure on $X_4$ such that the natural projection  $G_{4,2}\to G_{4,2}/T^4$  is  a smooth map.  Precisely, it is shown in~\cite{BT-1} that $S^5$ has cone-like singularities at the points with non-trivial stabilizer. In this case the non-trivial stabilizer has the dimension $1, 2$ or $3$  and  complexity of a singularity grows together with the dimension of its stabilizer. The space $X_5$ is no longer a manifold
 since  it is computed  in~\cite{BT-2}, see also~\cite{HS} that the   nontrivial homology groups for  $X_5$ are $\Z$ in dimensions $0$ and $8$, while it is $\Z _{2}$ in dimension $5$.

A point $x\in X_n$ is said to be a critical point  if its corresponding $T^n$-orbit in $G_{n,2}$   has a non-trivial stabilizer. As it  is proved in~\cite{BT-2},~\cite{BT-2nk} a  point $x\in X_n$ is a critical point if and only if  a   point from  its $T^n$-orbit is a singular  point of the smooth standard moment map $\mu _{n,2} : G_{n,2}\to \Delta _{n,2}$   in the sense of mathematical analysis, where $\Delta _{n,2}$ is considered as a smooth manifold with corners.  

On the other hand  for any $0 <k<n$ there  is the decomposition of  $G_{n,k}$   into the strata,  in which  a  stratum  $W_{\sigma}$   consists  of  those  $(\C ^{*})^{n}$ - orbits which have the same moment map image,  that is an interior of the  polytope  $P_{\sigma}\subset \Delta _{n,k}$. A polytope $P_{\sigma}$  we call   an admissible polytope for $W_{\sigma}$.    The  orbit space $F_{\sigma} = W_{\sigma}/(\C ^{*})^{n}$ is called a space of parameters of a  stratum $W_{\sigma}$ and it holds  $W_{\sigma}/T^n \cong \stackrel{\circ}{P}_{\sigma}\times F_{\sigma}$.  In particular the main stratum $W$ is a stratum whose  admissible polytope is the whole hypersimplex $\Delta _{n,k}$ and its space of parameters we denote by $F_{n,k}$.    The main  stratum is  an open dense set in $G_{n,k}$ which implies  that  the space $\stackrel{\circ}{\Delta}_{n,k}\times F_{n,k}$ is an open dense set in the orbit space $G_{n,k}/T^n$.  The $T^n$ -  action on $G_{n,k}$ defines    the  semi-upper continuous function from $G_{n,k}$ to the set  $S(T^n)$ of all toral subgroups in $T^n$, which to any point $L\in G_{n,k}$ assigns its stabilizer $T_{L}$. We proved in~\cite{BT-2} that this function is constant  $T_{\sigma}$ on each stratum $W_{\sigma}$ which implies that the torus  $T^{\sigma} = T^{n}/T_{\sigma}$ acts freely on $W_{\sigma}$.   The notions of the strata, corresponding  admissible polytopes and spaces of parameters have already been present  in some forms known in the literature. As we realized these notions are not enough for the description of an orbit space $G_{n,k}/T^n$. Therefore,   in our papers~\cite{BT-2},~\cite{BT-2nk} we introduced the new notions of universal space of parameters $\mathcal{F}_{n, k}$ and virtual spaces of parameters $\tilde{F}_{\sigma}\subset \mathcal{F}_{n,k}$ for the strata $W_{\sigma}$. The key properties of these notions are   that $\mathcal{F}_{n,k}$ is a compactification of the space of parameters  $F_{n,k}$ of the main stratum  and that  $\mathcal{F}_{n,k} = \cup _{\sigma}\tilde{F}_{\sigma}$. Moreover, there exists  the projection $p_{\sigma} : \tilde{F}_{\sigma} \to F_{\sigma}$ for any $\sigma$. We explicitly describe $\mathcal{F}_{n} = \mathcal{F}_{n,2}$ in~\cite{BT-U} and show that it   is a smooth compact manifold which can be identified with   the Chow quotient $G_{n,2}\!/\!/T^n$ defined by Kapranov~\cite{Kap}.


We define a  point $x\in G_{n,2}/T^n$ to be a singular point if   for the stratum $W_{\sigma}$ such that $x\in W_{\sigma}/T^n$,  the space of parameters $F_{\sigma}$ is not homeomorphic to the virtual space of parameters $\tilde{F}_{\sigma}$. It will be shown in the paper that this condition can be formulated in terms of  Pl\"ucker coordinates as well.

For $n\geq 5$ all critical points of $X_n$ belong to the set of singular points, while the case $n=4$ is somewhat special. 

\subsection{The main results}

The main goal  of this paper is to construct a smooth manifold  with corners $U_n$ which functorially  resolves the singular points  of an  orbit space $X_n$  being at the same time compatible with the  combinatorial structure of $X_n$ described below.  Let $Y_n = X_n\setminus \text{Sing}X_n$ and note that $Y_n$ is an open, dense set in $X_n$ which is a manifold.  By the resolution of singularities we  mean that  $U_n$ is a such manifold for which there exists  a  projection $p: U_n\to X_n$   such that for an open, dense submanifold  $V_n = p^{-1}(Y_n)\subset U_n$ the map $p : V_n\to Y_n$ is a diffeomorphism.

Note that the sequences of embeddings of the spaces $\C^2 \subset C^3 \subset \ldots \subset C^n \subset C^{n+1} \subset \ldots$ and the  groups  
$ T^2 \subset T^3\subset \ldots \subset T^n \subset T^{n+1}  \subset  \ldots$ define the sequence of embeddings of the Grassmann manifolds 
$G_{2,2}\subset G_{3,2}\subset \ldots G_{n,2} \subset G_{n+1,2}\subset \ldots$ and their orbit spaces 
$X_2 \subset X_3 \subset X_n \subset X_{n+1}\subset \ldots$.  We show that our construction has functorial property meaning that it  produces   the sequence of embeddings of the  smooth manifolds with corners which accordingly resolve the singular points of the orbit spaces $X_n$.

In the case of the toric manifolds the interiors of admissible polytopes do not intersect. The difficulties  in the description of the orbit spaces $G_{n,k}/T^n$ is caused by the fact that     there are, as a rule,  admissible polytopes whose interiors have non-empty intersection 

In the    paper of Goresky-MacPherson~\cite{GM}  it was  suggested the decomposition of $\Delta _{n,k}$ into disjoint union  of  chambers $C_{\omega}$. The chambers are obtained by  the intersections of the interiors of all admissible polytopes, that is $C_{\omega} = \cap _{\sigma \in \omega} \stackrel{\circ}{P}_{\sigma}$, such that $C_{\omega} \cap  \stackrel{\circ}{P}_{\sigma}= \emptyset$ if  $\sigma \not \in \omega$.  They pointed  that for any $x, y\in C_{\omega}$ the preimages  orbit spaces  $\mu ^{-1}(x)/T^n$ and $\mu ^{-1}(y)/T^n$ are homeomorphic, that is homeomorphic to some compact topological space $F_{\omega}$.   Note that $F_{\omega}$ has the  canonical decomposition as $F_{\omega} = \cup _{\sigma \in \omega} F_{\sigma}$. The difficulties with the  description of the orbit space $G_{n,k}/T^n$ - structure is caused   by the fact that  preimages  $F_{\omega _{1}}$ and $F_{\omega _{2}}$  are not in  general  homeomorphic for $\omega _1\neq \omega _2$ and one needs to make them correspondent.

We resolve this problem for the Grassmannians $G_{n,2}$ by showing that there exist a smooth manifold $\mathcal{F}_{n}$, that is a universal space of parameters,  and  the continuous  projection $G:  U_n =\Delta _{n,2}\times \mathcal{F}_{n} \to X_n$, see Theorem~\ref{prefin} and Theorem~\ref{fin}.  The projection $G$ defines on $U_n=\Delta _{n,2}\times \mathcal{F}_{n}$ an equivalence relation  which we explicitly describe in terms of the chamber decomposition  of $\Delta _{n,2}$ and the corresponding decompositions of  the manifold $\mathcal{F}_{n}$.  In the result we obtain that   the orbit space $X_n$ is a quotient space of a smooth manifold with corners $ U_n=\Delta _{n,2}\times \mathcal{F}_{n}$ by this equivalence relation. Our construction is based on two main inputs: {\bf first} we show that  the decomposition of $\Delta _{n,2}$ indicated by Goresky-MacPherson can be described in terms  of some special hyperplane arrangement;  {\bf second} we prove that for any point $x\in \Delta _{n,2}$  the union $\tilde{F}_{x} = \cup \tilde{F}_{\sigma}$ of all virtual spaces of parameters $\tilde{F}_{\sigma}$,  which correspond to the admissible polytopes $P_{\sigma}$ such that $x\in \stackrel{\circ}{P}_{\sigma}$ coincide with the universal space of parameters, that is   $\tilde{F}_{x}= \mathcal{F}_{n}$. 
 In the result we obtain the required  projection $U_n=\Delta _{n,2}\times \mathcal{F}_{n} \to X_n$. The smooth manifold with corners $U_n$  is a resolution of singularities of $X_n$ which is functorially  related  to the sequence of the  natural embeddings of these orbit spaces and which is compatible with the structure of the above chamber decomposition of the hypersimplex $\Delta _{n,2}$. 

Moreover,  it is defined  the action of the symmetric group $S_n$ on $X_n$  by   the action of the normalizer $N(T^n)$ in $U(n)$ on $G_{n,2} = U(n)/(U(2)\times U(n-2))$.  The group $S_n$ acts as well on $\Delta _{n,2}$ by the permutation of the vertices, this action preserves the chamber decomposition of $\Delta _{n,2}$  and  the induced moment map $\hat {\mu}_{n,,2} : X_n\to \Delta _{n,2}$ is $S_n$ - equivariant.  Using this one can define $S_n$-action on $U_n$  and the map  $G : U_n\to X_n$, which resolves the singular points of $X_n$ is $S_n$-equivariant.

We want to point that our results may have application  in the  study of positive complexity torus actions which has been recently extensively developing, see for example~\cite{AA-1},~\cite{AC},~\cite{K-T-1},~\cite{tim}. In that context the theory of spherical manifolds  including theory of homogeneous spaces of compact Lie groups provides numerous  examples of  positive complexity torus actions,~\cite{AC}.  The interesting examples arise as well in the theory of symplectic manifolds with
Hamiltonian torus actions, ~\cite{K-T}. In the recent time many non-trivial results about positive complexity torus actions are obtained by the methods of equivariant algebraic topology.

In addition, in the focus of recent studies are the description of  torus  orbit closures  for different varieties, see~\cite{ML},~\cite{MLP},~\cite{NO}. Our geometric analytical description of the admissible, that is matroid, polytopes for  $G_{n,2}$ may lead to the results in this direction.

\section{$T^n$- equivariant automorphisms of $G_{n,k}$}
We first analyse the relations between the $T^n$-equivariant automorphisms of $G_{n,k}$ and the standard moment map $\mu _{n,k} : G_{n,k}\to \R ^n$. 
  
Let $G_{n,k}$ be a complex Grassmann manifold of $k$-dimensional complex planes in $\C ^{n}$. It is known~\cite{Chow} that the group of holomorphic automorphisms $\text{Aut} G_{n,k}$  is isomorphic to the projective group $PU(n)$ if $n\neq 2k$, while  for $G_{2k,k}$ it is isomorphic to $\Z _{2}\times PU(n)$, where $n=2k$.  The group $\Z _{2}$ is defined by the duality isomorphism.  The duality isomorphism for $G_{2k,k}$ we obtain from  the  standard  diffeomorphism $c_{n,k} : G_{n, k}\to G_{n, n-k}$: denote by $l : \C ^{n} \to (\C ^{n})^{*}$  the canonical isomorphism, where $(\C ^{n})^{*}$ is the dual space for $\C ^{n}$,  let  $L\in G_{n,k}$ and put $L^{'}=\{ \lambda \in (\mathbb C^{*})^{n} | \lambda (v)=0\; \text{for any}\; v\in L\}$. Then it is defined $c_{n,k}(L) = l^{-1}(L^{'})$ and for $n=2k$ it gives one more   non-linear isomorphism for $G_{2k,k}$.

Further,  the symmetric group $S_n$  acts on $\C ^n$ by permuting the coordinates, so  any $\mathfrak{s} \in S_n$ produces the automorphism $\mathfrak{s} : G_{n,k}\to G_{n,k}$. It follows that  $S_{n}$ is a subgroup of $\text{Aut}G_{n,k}$.   

 Let us consider the canonical action of the compact torus $T^{n}$ on $G_{n,k}$ and let $S^{1}$ be the diagonal circle in $T^n$. Then $T^{n-1} = T^n/S^1$ acts effectively on $G_{n,k}$ and it is a maximal torus in $PU(n)$.  The normalizer 
of $T^{n-1}$ in $PU(n)$ is 
$T^{n-1}\rtimes S_{n}$. The canonical diffeomorphism $c_{n,k} : G_{n,k}\to G_{n, n-k}$ is  equivariant for the canonical torus action as well. We summarize:

\begin{lem}
 The  subgroup  of $\text{Aut} G_{n,k}$ which contains those  elements that commutes with the canonical $T^n$-action on $G_{n,k}$ is $T^{n-1}\rtimes S_n$ for $n\neq 2k$ and it is $\Z_{2}\times (T^{n-1}\rtimes S_n)$ for $n=2k$.  
\end{lem}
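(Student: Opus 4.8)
The plan is to identify the subgroup in question with the normalizer of the effectively acting maximal torus inside $\text{Aut}\,G_{n,k}$, and then to compute that normalizer by the structure theory of $PU(n)$. First I would make precise the meaning of ``commutes with the canonical $T^n$-action'': an automorphism $\varphi$ belongs to the subgroup exactly when it carries each $T^n$-orbit onto a $T^n$-orbit, equivalently when it is equivariant up to an automorphism of $T^n$. (Note that a coordinate permutation $\mathfrak{s}\in S_n$ intertwines the action with its $\mathfrak{s}$-twist, so it preserves the action as a whole without centralizing it; the literal centralizer would yield only $T^{n-1}$.) By Chow's theorem~\cite{Chow} recalled above, $\text{Aut}\,G_{n,k}=PU(n)$ for $n\neq 2k$ and $\text{Aut}\,G_{n,k}=\Z_2\times PU(n)$ for $n=2k$, the factor $\Z_2$ being generated by the duality $c_{n,k}$. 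Since the diagonal circle $S^1\subset T^n$ acts trivially, the effective torus is $T^{n-1}=T^n/S^1$, and, as recalled above, it is a maximal torus of $PU(n)$; this is what lets me bring in Lie-theoretic tools.

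The central step is to show that, inside $\text{Aut}\,G_{n,k}$, the orbit-preserving automorphisms are precisely the normalizer $N(T^{n-1})$. One inclusion is immediate: if $\varphi T^{n-1}\varphi^{-1}=T^{n-1}$ then $\varphi(T^{n-1}x)=T^{n-1}\varphi(x)$. For the converse I would argue as follows. If $\varphi$ maps orbits to orbits and $g\in T^{n-1}$, then, setting $h=\varphi g\varphi^{-1}$, for every $x$ one has $h(x)\in T^{n-1}x$, i.e. $h$ preserves every $T^{n-1}$-orbit. On the main stratum $W$ the effective torus $T^{n-1}$ acts freely (its stabilizer is the diagonal $S^1$, as recalled in the introduction following~\cite{BT-2}), so on each principal orbit $h$ acts as a uniquely determined element $t(x)\in T^{n-1}$; the assignment $x\mapsto t(x)$ is continuous and $W$ is connected, hence $t(x)$ is constant and $h\in T^{n-1}$. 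Thus $\varphi T^{n-1}\varphi^{-1}\subseteq T^{n-1}$, and applying the same to $\varphi^{-1}$ gives $\varphi\in N(T^{n-1})$. I expect this equivalence --- in particular the claim that an element of $PU(n)$ fixing every orbit already lies in $T^{n-1}$ --- to be the main obstacle, since a priori two distinct subtori can share the same orbit foliation; freeness on the connected main stratum is exactly what rules this out.

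It then remains to identify $N(T^{n-1})$. Since $T^{n-1}$ is a maximal torus of $PU(n)$, we have $N_{PU(n)}(T^{n-1})/T^{n-1}\cong W(PU(n))=S_n$, the Weyl group of type $A_{n-1}$; the permutation matrices furnish a section meeting $T^{n-1}$ trivially (a scalar permutation matrix is the identity), so $N_{PU(n)}(T^{n-1})=T^{n-1}\rtimes S_n$, and these are exactly the automorphisms induced by the $\mathfrak{s}\in S_n$ together with $T^{n-1}$. For $n\neq 2k$ this already gives the claim. For $n=2k$ the extra factor $\Z_2=\langle c_{n,k}\rangle$ is central in $\text{Aut}\,G_{2k,k}$ and, by the $T^n$-equivariance of $c_{n,k}$ noted above, it preserves (indeed centralizes) the torus action; hence conjugation by any element of $\Z_2\times PU(n)$ stays in the $PU(n)$ factor, and the orbit-preserving subgroup is $\Z_2\times N_{PU(n)}(T^{n-1})=\Z_2\times(T^{n-1}\rtimes S_n)$, completing the proof.
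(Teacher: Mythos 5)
Your overall skeleton is the same as the paper's: invoke Chow's theorem to reduce to $PU(n)$ (plus the duality factor when $n=2k$), observe that the effective torus $T^{n-1}=T^n/S^1$ is a maximal torus of $PU(n)$, and identify the group in the statement with the normalizer $N_{PU(n)}(T^{n-1})=T^{n-1}\rtimes S_n$ via the Weyl group of type $A_{n-1}$. The extra step you insert --- proving that an automorphism carrying $T^n$-orbits onto $T^n$-orbits must actually normalize $T^{n-1}$ --- is exactly the right thing to make the statement precise, but your proof of it has a genuine gap. Having produced, for $h=\varphi g\varphi^{-1}$, the translation function $t:W\to T^{n-1}$ with $h(x)=t(x)\cdot x$ on the main stratum, you conclude that $t$ is constant ``because it is continuous and $W$ is connected.'' That inference is a non sequitur: a continuous map from a connected manifold to a torus need not be constant, and a mere homeomorphism preserving every orbit can indeed have non-constant $t$ (for the rotation action of $S^1$ on $\C^{*}$, the diffeomorphism $z\mapsto e^{i|z|}z$ preserves every circle $|z|=r$ but is no torus element). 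What forces constancy is that $h$ is a \emph{holomorphic} automorphism, and this must enter the argument: for example, freeness of the algebraic torus $(\C^{*})^{n-1}$ on $W$ gives local holomorphic slices, so $t$ is a holomorphic map into $(\C^{*})^{n-1}$ taking values in the totally real compact torus $T^{n-1}$, hence constant by the maximum principle.

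A fix closer to the paper is also available and shorter: an orbit-preserving automorphism fixes every one-point orbit, i.e.\ every $T^n$-fixed point, so its linear lift maps each coordinate $k$-plane to \emph{itself}; since $\C e_i$ is the intersection of all coordinate $k$-planes containing $e_i$, the lift preserves each coordinate line and is therefore diagonal up to scalar, giving $h\in T^{n-1}$ directly (this is precisely the counting argument of Proposition~\ref{sub}, used there for automorphisms that merely permute the fixed points). One smaller slip: your parenthetical that the duality $c_{n,k}$ ``indeed centralizes'' the torus action is false --- by Lemma~\ref{Pl}, $P^{J}(L)=\pm P^{\bar J}(c_{n,k}(L))$, from which one sees that conjugation by $c_{n,k}$ acts on $T^{n-1}$ by inversion --- but this is harmless for your argument, since inversion preserves the subgroup $T^{n-1}$ and normalization is all you need.
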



Let $W_{0}\subset G_{n,k}/T^n$ be the  set of fixed point for  the considered  $T^n$-action on $G_{n,k}$. We find useful to note the following:

\begin{prop}\label{sub}
Let $H$ be a subgroup of those holomorphic automorphisms  for  $G_{n,k}$  for which the set $W_{0}$ is invariant. 
Then  $H= T^{n-1}\rtimes S_n$ for $n\neq 2k$ and i $H=\Z_{2}\times (T^{n-1}\rtimes S_n)$ for $n=2k$.  
\end{prop}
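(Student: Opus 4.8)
\emph{Plan.} The first thing I would do is identify $W_{0}$ concretely: the $T^n$-fixed points of $G_{n,k}$ are exactly the $\binom{n}{k}$ coordinate $k$-planes $L_I=\langle e_i\mid i\in I\rangle$, $|I|=k$, each of which is a singleton orbit, so $W_{0}\subset G_{n,k}/T^n$ is in bijection with $\{L_I\}$ and the condition ``$W_{0}$ is invariant under $\phi$'' means precisely that $\phi$ permutes the finite set $\{L_I\}$. The proposition then says that this \emph{a priori} weaker requirement already forces $\phi$ into the group of the preceding Lemma. I would prove the two inclusions separately. The inclusion $T^{n-1}\rtimes S_n\subseteq H$ (and the extra duality when $n=2k$) is immediate: a diagonal matrix fixes every $L_I$, a permutation $\sigma$ sends $L_I$ to $L_{\sigma(I)}$, and the duality $c_{n,k}$ sends $L_I$ to $L_{I^{c}}$, so in each case $\{L_I\}$ is permuted.

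For the reverse inclusion $H\subseteq T^{n-1}\rtimes S_n$ I would invoke Chow's description quoted above: any $\phi\in\text{Aut}G_{n,k}$ is induced by some $A\in PU(n)$ when $n\neq 2k$, and equals $A$ or $c_{n,k}\circ A$ with $A\in PU(n)$ when $n=2k$. Since $c_{n,k}$ itself preserves $W_{0}$, composing with it if necessary reduces me to $\phi=[A]$ with a representative $A\in\GL(n,\C)$ satisfying $A\,L_I=L_{\tau(I)}$ for some permutation $\tau$ of the $k$-subsets. The key step is to recover the coordinate \emph{lines} from the coordinate $k$-planes through the intersection lattice, which $A$ respects. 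For each index $i$ one has $\langle e_i\rangle=\bigcap_{I\ni i}L_I$, because $\bigcap_{I\ni i}I=\{i\}$ as soon as $k\le n-1$. Applying $A$ and using that an intersection of coordinate subspaces is again a coordinate subspace gives $A\langle e_i\rangle=\bigcap_{I\ni i}L_{\tau(I)}=L_{S}$ with $S=\bigcap_{I\ni i}\tau(I)$; invertibility of $A$ forces this space to be one-dimensional, so $|S|=1$ and $A\langle e_i\rangle=\langle e_{\sigma(i)}\rangle$ for a permutation $\sigma\in S_n$.

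It then follows that $A e_i=c_i e_{\sigma(i)}$ with $c_i\neq 0$, i.e. $A$ is a monomial (generalized permutation) matrix, and modulo scalars such matrices are exactly the normalizer $N(T^{n-1})=T^{n-1}\rtimes S_n$ of the maximal torus in $PU(n)$; this yields $H=T^{n-1}\rtimes S_n$ for $n\neq 2k$. In the self-dual case $n=2k$ the same reduction shows every element of $H$ is either monomial or the composite of $c_{n,k}$ with a monomial matrix, and checking that $c_{n,k}$ commutes with $T^{n-1}\rtimes S_n$ gives $H=\Z_2\times(T^{n-1}\rtimes S_n)$ with the $\Z_2$ generated by the duality. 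The main obstacle I anticipate is the reconstruction step: I must verify carefully that $\langle e_i\rangle=\bigcap_{I\ni i}L_I$ holds throughout the range $1\le k\le n-1$ and that $A$ genuinely transports these intersections to intersections of coordinate planes, since once this is in place the identification of the residual map as monomial is routine. A secondary point needing care is the bookkeeping in the case $n=2k$, specifically confirming that the duality contributes a direct rather than a semidirect factor.
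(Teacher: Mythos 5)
Your proof is correct and takes essentially the same route as the paper: both use Chow's theorem to write the automorphism as a projective-linear map (composing with the duality when $n=2k$), and both show that preserving the set of coordinate $k$-planes forces the linear representative to permute the coordinate lines up to scalars, i.e.\ to be monomial, hence to lie in $T^{n-1}\rtimes S_n$. Your intersection step $\langle e_i\rangle=\bigcap_{I\ni i}L_I$ is in fact a cleaner rendering of the paper's counting argument, which concludes from the vector $\hat{f}(v)$ lying on ``at least $n-1$'' distinct coordinate $k$-planes that it is a coordinate vector — a bound that strictly speaking should be $\binom{n-1}{k-1}$ — so your version is, if anything, slightly more rigorous at that point.
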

\begin{proof}
We need to prove that if $f\in H$ then $f$ commutes with the canonical $T^n$-action on $G_{n,k}$.  
The fixed points for the canonical action of $T^{n}$ on $G_{n,k}$ are $k$-dimensional coordinate subspaces in $\C ^{n}$. Let $v$ be a coordinate vector. There are ${n-1\choose k-1}$ coordinates $k$-subspaces which contains $v$. Let $L$  be a such  subspace and  $f\in PU(n)$  which leaves $W_{0}$ invariant. For the linear isomorphism $\hat{f}$ representing  $f$ it holds that $\hat{f}(L)$ is a $k$-dimensional coordinate subspace defined by $z_{i_1}=\ldots = z_{i_k}=0$ for some $1\leq i_1<\ldots <i_k\leq n$,   which implies that the coordinates of $\hat{f}(v)$ indexed by $i_1,\ldots ,i_k$ are equal to zero.  Since there are at least $n-1$ different $k$-dimensional  coordinate subspaces which contains $v$  it follows that $\hat{f}(v)$ has to be, up to constant,  a coordinate vector. Therefore the isomorphism  $\hat{f}$, up to constants,  permutes the coordinate vectors. It implies that $\hat{f}$ commutes with $T^n$ - action on $\C ^{n}$, so $f$ commutes with the induced  $T^n$-action on $G_{n,k}$.
When $k=2n$, the nontrivial map $f\in \Z _{2}$ any $k$-dimensional coordinate subspace maps to its orthogonal complement which is a $k$-coordinate subspace as well.
\end{proof}

 Now, let $\mu _{n,k} : G_{n,k}\to  \Delta _{n,k}\subset \R ^{n}$ be the standard moment map that is
\[
\mu _{n,k}(L) = \frac{1}{\sum |P^{J}(L)|^{2}}\sum |P^{J}(L)|^2 \Lambda _{J},
\]
where $\Lambda _{J}\in \R^n$, $\Lambda _{i}=1$ for $i\in J$, while $\Lambda _{i}=0$ for $i\notin J$ and $J\subset \{1,\ldots ,n\}$, $\|J\|=k$.


Assume  that $f\in \text{Aut}G_{n,k}$   is  an automorphism   for which  there exists a combinatorial  isomorphism $\bar{f} : \Delta _{n,k} \to \Delta _{n,k}$ such that the following diagram commutes:
\begin{equation}\label{diag}
\begin{CD}
 G_{n,k} @>{f}>>G_{n,k}\\
@VV \mu _{n,k} V @VV\mu _{n,k} V \\ 
 \Delta _{n,k}@>{\bar{f}}>>\Delta _{n,k},
\end{CD}\end{equation}

that is $\mu _{n,k} \circ f= \bar{f}\circ \mu _{n,k}$.

\begin{lem}
If an automorphism   $f$  of $G_{n,k}$ satisfies the assumption~\eqref{diag}  then $f\in T^{n-1}\rtimes  S_{n}$ for $n\neq 2k$ and $f\in T^{n-1}\times (\Z _{2}\times S_{n})$ for $n=2k$.
\end{lem}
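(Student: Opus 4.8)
The plan is to reduce the statement to Proposition~\ref{sub} by showing that any automorphism $f$ satisfying the commutative diagram~\eqref{diag} necessarily leaves the fixed-point set $W_0$ invariant.

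First I would establish the key characterization of the $T^n$-fixed points in terms of the moment map: a point $L\in G_{n,k}$ lies in $W_0$ if and only if $\mu_{n,k}(L)$ is a vertex of the hypersimplex $\Delta_{n,k}$. Indeed, by its very definition $\mu_{n,k}(L)=\sum_J w_J\Lambda_J$ is a convex combination of the points $\Lambda_J$, which are exactly the (pairwise distinct) vertices of $\Delta_{n,k}$, with weights $w_J=|P^J(L)|^2/\sum_{J'}|P^{J'}(L)|^2\geq 0$ summing to $1$. Since each $\Lambda_{J_0}$ is an extreme point of $\Delta_{n,k}$, the combination $\sum_J w_J\Lambda_J$ can equal $\Lambda_{J_0}$ only when $w_{J_0}=1$ and $w_J=0$ for $J\neq J_0$, that is, only when $L$ has a single nonzero Pl\"ucker coordinate $P^{J_0}(L)$. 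By the description of the fixed points recalled in the proof of Proposition~\ref{sub}, the latter condition says precisely that $L$ is the coordinate $k$-plane indexed by $J_0$, i.e. $L\in W_0$.

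Next I would invoke that $\bar f$ is a combinatorial isomorphism of $\Delta_{n,k}$: it preserves the face lattice and hence permutes the set of $0$-dimensional faces, i.e. it maps vertices to vertices. Combining this with the first step and the relation $\mu_{n,k}\circ f=\bar f\circ\mu_{n,k}$, for any $L\in W_0$ the point $\mu_{n,k}(f(L))=\bar f(\mu_{n,k}(L))$ is again a vertex of $\Delta_{n,k}$, and so by the reverse implication of the characterization we get $f(L)\in W_0$. Thus $f(W_0)\subseteq W_0$, and since $f$ is a bijection and $W_0$ is a finite set, $f(W_0)=W_0$; that is, $W_0$ is $f$-invariant. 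Proposition~\ref{sub} then yields that $f\in T^{n-1}\rtimes S_n$ for $n\neq 2k$ and $f\in \Z_2\times(T^{n-1}\rtimes S_n)$ for $n=2k$, which is the assertion of the lemma.

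I expect the only point requiring genuine care to be the moment-map characterization of $W_0$, and within it the use of the extreme-point property of the vertices $\Lambda_J$ together with their distinctness as $0$-$1$ vectors; everything else is a direct reduction to Proposition~\ref{sub}. The use of the hypothesis is confined entirely to the single fact that $\bar f$ carries vertices of $\Delta_{n,k}$ to vertices, so the proof does not need the stronger information that $\bar f$ preserves higher-dimensional faces, nor any analytic regularity of $\bar f$ beyond its combinatorial nature.
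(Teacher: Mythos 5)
Your proposal is correct and follows essentially the same route as the paper: the paper's proof likewise observes that $\mu_{n,k}$ gives a bijection between $W_0$ and the vertices of $\Delta_{n,k}$, that the combinatorial isomorphism $\bar{f}$ permutes vertices, and hence that $f$ leaves $W_0$ invariant, so that Proposition~\ref{sub} applies. The only difference is that you spell out the vertex characterization (the extreme-point argument and the single-nonzero-Pl\"ucker-coordinate fact), which the paper asserts without proof.
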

\begin{proof}
Since $\mu _{n,k} $ gives a bijection between the set $W_{0}$ of  fixed points for $T^n$-action on $G_{n,k}$ and the vertices of $\Delta _{n,k}$, and, by assumption $\bar{f}$ is a combinatorial isomorphism, it follows that $f$ leaves the set $W_{0}$ invariant. The statement follows from  Proposition~\ref{sub}.
\end{proof}

We prove now that the automorphisms which satisfy~\eqref{diag} in fact  coincide with $T^{n-1}\rtimes S_n$  or $\Z_{2}\times (T^{n-1}\rtimes S_n)$. 

Since $\mu _{n,k}$ is $T^n$-invariant it follows that any $f \in T^{n-1} < \text{Aut}G_{n,k}$ satisfies~\eqref{diag} as one can take $\bar{f}=id_{\Delta _{n,k}}$.  The action of symmetric group  $S_{n}$  on $G_{n,k}$ induces  the  action of $S_n$ on $\Delta_{n,k}$ given by the permutations of coordinates.  

\begin{lem}
For any $\mathfrak{s}\in  S_n$ the combinatorial automorphism $\bar{\mathfrak{s}} : \Delta _{n,k}\to \Delta _{n,k}$ such the diagram~\eqref{diag} commutes  is  given by the  permutation of coordinates  in $\R ^n$ which is  defined  by $\mathfrak{s}$. 
\end{lem}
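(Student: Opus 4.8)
The plan is to verify directly that the coordinate permutation induced by $\mathfrak{s}$ is a combinatorial automorphism of $\Delta_{n,k}$ which makes the square commute, and then to deduce from surjectivity of $\mu_{n,k}$ that no other choice is possible. Write $\rho_{\mathfrak{s}} : \R^n \to \R^n$ for the linear map determined by $\rho_{\mathfrak{s}}(e_j) = e_{\mathfrak{s}(j)}$, so that $(\rho_{\mathfrak{s}}(v))_i = v_{\mathfrak{s}^{-1}(i)}$. The first observation I would record is how $\mathfrak{s}$ acts on Pl\"ucker coordinates: representing $L$ by a $k\times n$ matrix, the action of $\mathfrak{s}$ on $\C^n$ permutes the columns, sending column $j$ to column $\mathfrak{s}(j)$, and hence permutes the maximal minors up to sign. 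Consequently $|P^J(\mathfrak{s}\cdot L)| = |P^{\mathfrak{s}^{-1}(J)}(L)|$ for every $k$-subset $J$, the sign being irrelevant because only the squared moduli enter the moment map.

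Next I would check the two claims for $\bar{\mathfrak{s}} = \rho_{\mathfrak{s}}$. Since the vertices of $\Delta_{n,k}$ are exactly the points $\Lambda_J$ and $\rho_{\mathfrak{s}}(\Lambda_J) = \Lambda_{\mathfrak{s}(J)}$ (both carry a $1$ precisely in the positions indexed by $\mathfrak{s}(J)$), the map $\rho_{\mathfrak{s}}$ permutes the vertex set bijectively and is therefore a combinatorial automorphism of the hypersimplex. For commutativity I would reindex the defining sum: using $|P^J(\mathfrak{s}\cdot L)|^2 = |P^{\mathfrak{s}^{-1}(J)}(L)|^2$ and setting $J = \mathfrak{s}(J')$, the normalizing denominator $\sum_J |P^J(\mathfrak{s}\cdot L)|^2$ is unchanged, while the numerator becomes
\[
\sum_{J} |P^{\mathfrak{s}^{-1}(J)}(L)|^2\, \Lambda_J = \sum_{J'} |P^{J'}(L)|^2\, \Lambda_{\mathfrak{s}(J')} = \rho_{\mathfrak{s}}\Big(\sum_{J'} |P^{J'}(L)|^2\, \Lambda_{J'}\Big)
\]
by linearity of $\rho_{\mathfrak{s}}$. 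Dividing by the common denominator gives $\mu_{n,k}(\mathfrak{s}\cdot L) = \rho_{\mathfrak{s}}(\mu_{n,k}(L))$, so the diagram~\eqref{diag} commutes with $\bar{\mathfrak{s}} = \rho_{\mathfrak{s}}$.

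Finally, for uniqueness I would invoke the fact that $\mu_{n,k}$ is surjective onto $\Delta_{n,k}$. Any $\bar{\mathfrak{s}}$ making~\eqref{diag} commute must satisfy $\bar{\mathfrak{s}}(\mu_{n,k}(L)) = \mu_{n,k}(\mathfrak{s}\cdot L)$ for all $L$, and since every point of $\Delta_{n,k}$ is of the form $\mu_{n,k}(L)$, this relation determines $\bar{\mathfrak{s}}$ completely; as $\rho_{\mathfrak{s}}$ already satisfies it, we must have $\bar{\mathfrak{s}} = \rho_{\mathfrak{s}}$. The only genuinely delicate point is the bookkeeping in the first step, namely keeping straight whether $\mathfrak{s}$ or $\mathfrak{s}^{-1}$, and $\mathfrak{s}(J)$ or $\mathfrak{s}^{-1}(J)$, appears at each stage; but the squared moduli make the minor signs disappear, so no real obstacle arises.
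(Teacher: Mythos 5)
Your proposal is correct and follows essentially the same route as the paper: both proofs rest on the observation that $\mathfrak{s}$ permutes the Pl\"ucker coordinates (up to sign, which the squared moduli kill), and then reindex the defining expression of $\mu_{n,k}$ to conclude $\mu_{n,k}(\mathfrak{s}\cdot L) = \bar{\mathfrak{s}}(\mu_{n,k}(L))$. Your added uniqueness step via surjectivity of $\mu_{n,k}$, and the explicit check that the coordinate permutation is a combinatorial automorphism, are points the paper leaves implicit, but they do not change the substance of the argument.
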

\begin{proof}
Since  $P^{J}(\mathfrak{s} (L))= P^{\mathfrak{s} (J)}(L)= P^{J}(L)$ for any $L\in G_{n,k}$  it follows that $\mu _{n,k} (\mathfrak{s} (L))= ((pr _{1}\circ \mu _{n,k}) (\mathfrak{s} (L)),\ldots ,(pr _{n}\circ \mu _{n,k})(\mathfrak{s} (L)) = ((pr_{\mathfrak{s}(1)}\circ\mu _{n,k})(L), \ldots , (pr_{\mathfrak{s} (n)}\circ \mu _{n,k})(L))= \bar{\mathfrak{s}} (\mu _{n,k}(L))$, where $pr _{i} : \R ^{n}\to \R ^{1}_{i}$ is the projection on $i$-th coordinate, $1\leq i\leq n$.
\end{proof}

As for the involutive automorphism of $G_{2k,k}$ we first  recall an explicit description  of the diffeomorphism  $c_{n,k} : G_{n,k}\to G_{n, n-k}$.  Let $L\in G_{n, k}$ and assume that the Pl\"ucker coordinate $P^{J}(L)\neq 0$, where $J=\{1,\ldots, k\}$. Then there exists the  base for $L$ in which $L$  can be represented by $n\times k$-matrix whose submatrix which consists of the first $k$ columns is an identity matrix, that is
\begin{equation}
L=\left(
\begin{array}{cc}
I\\
A
\end{array}
\right),
\; \text{where}\;  A\;\text{is}\;  (n-k)\times k\; \text{ matrix}.
\end{equation}
It is easy to check that  the $(n-k)$-dimensional subspace $l^{-1}(L^{'})$ can be represented by the matrix
\begin{equation}
l^{-1}(L^{'})=\left(
\begin{array}{cc}
-A^{T}\\
I
\end{array}
\right),
\; \text{where}\;  I\;\text{is}\;  (n-k)\times (n-k)\; \text{identity  matrix}.
\end{equation}
 
It immediately implies the following:
\begin{lem}\label{Pl}
For any $J\subset \{1,\ldots , n\}$, $\|J\|=k$ it holds
\begin{equation}\label{PLC}
P^{J}(L)= \pm P^{\bar{J}}(l^{-1}(L^{'})), \; \text{where}\; \bar{J} = \{1,\ldots ,n\}\setminus J.
\end{equation}
\end{lem}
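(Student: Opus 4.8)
The plan is to evaluate both sides directly from the two matrix representatives just displayed, reducing each maximal minor to a single minor of the block $A$. Given $J$ with $\|J\|=k$, split it as $J=J_1\sqcup J_2$ where $J_1=J\cap\{1,\dots,k\}$ and $J_2=J\cap\{k+1,\dots,n\}$, and write $\bar J=\bar J_1\sqcup \bar J_2$ for the analogous splitting of the complement. Throughout let $A[R,C]$ denote the submatrix of $A$ with row set $R$ and column set $C$, and set $J_2-k=\{\,j-k:j\in J_2\,\}$, with the same shift applied to $\bar J_2$. Note first the dimension count $|J_2|=k-|J_1|=|\bar J_1|$, so the minors of $A$ that appear below are all square.

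First I would compute $P^{J}(L)$, the minor of $\left(\begin{smallmatrix}I\\ A\end{smallmatrix}\right)$ on the rows indexed by $J$. The rows coming from $J_1$ lie in the identity block $I_k$ and are therefore standard basis covectors; Laplace-expanding along these $|J_1|$ rows selects the columns $J_1$ and leaves the cofactor equal, up to sign, to the minor of $A$ on rows $J_2-k$ and the complementary columns $\{1,\dots,k\}\setminus J_1=\bar J_1$. Thus
\[
P^{J}(L)=\pm\,\det A[\,J_2-k,\ \bar J_1\,].
\]
Symmetrically, in $P^{\bar J}(l^{-1}(L'))$ the rows indexed by $\bar J_2$ lie in the bottom identity block $I_{n-k}$ of $\left(\begin{smallmatrix}-A^{T}\\ I\end{smallmatrix}\right)$. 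Expanding along these rows selects the columns $\bar J_2-k$ and leaves, up to sign, the minor of $-A^{T}$ on rows $\bar J_1$ and the complementary columns $\{1,\dots,n-k\}\setminus(\bar J_2-k)=J_2-k$, so
\[
P^{\bar J}(l^{-1}(L'))=\pm\,\det\bigl((-A^{T})[\,\bar J_1,\ J_2-k\,]\bigr).
\]
The concluding observation is that $(-A^{T})[\bar J_1,J_2-k]=-\bigl(A[J_2-k,\bar J_1]\bigr)^{T}$, whose determinant is $\pm\det A[J_2-k,\bar J_1]$; comparing the two displays yields $P^{\bar J}(l^{-1}(L'))=\pm P^{J}(L)$, as claimed.

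The only delicate point is the bookkeeping of the three signs (the two Laplace expansions together with the sign from transposition and the scalar $-1$), but since the statement asserts equality only up to sign this is immaterial and I would not track it; this is why the result is genuinely \emph{immediate} from the matrix forms. As a consistency check, $J=\{1,\dots,k\}$ gives $J_2=\bar J_1=\varnothing$, whereupon both sides equal $\det I=1$. Conceptually the identity is nothing but the coordinate expression of the duality between $L$ and its orthogonal complement $l^{-1}(L')$ for the standard symmetric form, that is, of the Hodge isomorphism $\Lambda^{k}\C^{n}\cong\Lambda^{n-k}\C^{n}$ sending $e_J\mapsto\pm e_{\bar J}$; this recovers the same statement invariantly and confirms that no obstacle beyond sign-tracking can arise.
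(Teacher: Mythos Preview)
Your proof is correct and is precisely the computation the paper intends: in the text the lemma is stated as something that ``immediately implies'' from the two block--matrix representatives of $L$ and $l^{-1}(L')$, with no further argument given, and your Laplace expansion along the identity rows is exactly the way to cash that claim out. There is nothing to compare; you have simply supplied the omitted details.
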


Then Lemma~\ref{Pl} implies:
\begin{lem}\label{cnk}
There exists a combinatorial isomorphism $\bar{c}_{n,k} : \Delta _{n,k}\to \Delta _{n, n-k}$ such that the following  diagram commutes
\begin{equation*}\begin{CD}
 G_{n,k} @>{c_{n,k}}>>G_{n,n-k}\\
@VV {\mu _{n,k} }V @VV\mu _{n,n-k}V \\ 
 \Delta _{n,k}@>{\bar{c}_{n,k}}>>\Delta _{n,n-k}.
\end{CD}\end{equation*}

\end{lem}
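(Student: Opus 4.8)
The plan is to exhibit an explicit combinatorial isomorphism $\bar{c}_{n,k}$ and then verify that the diagram commutes by a direct comparison of the two moment maps. The natural candidate is the map on $\R^n$ that reverses every coordinate, namely $\bar{c}_{n,k}(x_1,\ldots,x_n) = (1-x_1,\ldots,1-x_n)$. Geometrically this sends the hypersimplex $\Delta_{n,k}$, whose vertices are the vectors $\Lambda_J$ with $\|J\|=k$, to the hypersimplex $\Delta_{n,n-k}$, whose vertices are the $\Lambda_{\bar{J}}$; indeed the coordinate reversal sends $\Lambda_J$ to $\Lambda_{\bar{J}}$, so it permutes the vertices and hence is a combinatorial isomorphism between the two hypersimplices.

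First I would record the effect of $\bar{c}_{n,k}$ on vertices: since $(\Lambda_J)_i=1$ iff $i\in J$, reversal gives $(1-(\Lambda_J)_i)=1$ iff $i\notin J$, i.e. iff $i\in\bar{J}$, so $\bar{c}_{n,k}(\Lambda_J)=\Lambda_{\bar{J}}$. This identifies the vertex sets of $\Delta_{n,k}$ and $\Delta_{n,n-k}$ via $J\mapsto\bar{J}$ and confirms the combinatorial isomorphism claim. Second, I would compute $\mu_{n,n-k}\circ c_{n,k}$ using \lemref{Pl}. For $L\in G_{n,k}$ write $M=c_{n,k}(L)=l^{-1}(L')$; by~\eqref{PLC} we have $|P^{\bar{J}}(M)|=|P^J(L)|$ for every $J$ with $\|J\|=k$, where $\bar{J}$ ranges over all $(n-k)$-subsets. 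Hence the normalizing denominator $\sum_{\|K\|=n-k}|P^K(M)|^2$ equals $\sum_{\|J\|=k}|P^J(L)|^2$, and
\begin{equation*}
\mu_{n,n-k}(M)=\frac{1}{\sum_{\|J\|=k}|P^J(L)|^2}\sum_{\|J\|=k}|P^J(L)|^2\,\Lambda_{\bar{J}}.
\end{equation*}
Applying $\bar{c}_{n,k}$ to $\mu_{n,k}(L)$ and using $\bar{c}_{n,k}(\Lambda_J)=\Lambda_{\bar{J}}$ together with linearity of the coordinate reversal on the affine combination produces exactly the same expression, so $\mu_{n,n-k}\circ c_{n,k}=\bar{c}_{n,k}\circ\mu_{n,k}$.

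The only genuine point requiring care is the step where I pass $\bar{c}_{n,k}$ through the convex combination defining $\mu_{n,k}$. Since $\sum_J \tfrac{|P^J(L)|^2}{\sum|P^J(L)|^2}=1$, the affine map $x\mapsto(1-x_1,\ldots,1-x_n)$ commutes with the barycentric average, sending $\sum_J w_J\Lambda_J$ to $\sum_J w_J(\mathbf{1}-\Lambda_J)=\sum_J w_J\Lambda_{\bar{J}}$, which matches the formula above. This is where the normalization in the definition of the moment map is used essentially: without the weights summing to one, the affine (rather than linear) part of $\bar{c}_{n,k}$ would not pass through cleanly. I expect this to be the main, though mild, obstacle—it is a one-line identity but must be invoked explicitly. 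Everything else reduces to the already-established Pl\"ucker identity of \lemref{Pl} and the elementary vertex bookkeeping, so no further input is needed.
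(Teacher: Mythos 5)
Your proof is correct and takes essentially the same route as the paper: the combinatorial isomorphism is complementation of vertices, $\Lambda_J \mapsto \Lambda_{\bar{J}}$, and the commutativity of the diagram is exactly the Pl\"ucker identity of Lemma~\ref{Pl} applied to the normalized weights $|P^J(L)|^2/\sum|P^J(L)|^2$. The only difference is presentational: you introduce the explicit affine formula $x \mapsto \mathbf{1}-x$ from the outset (the paper records this formula only afterwards, in the case $n=2k$), which has the mild advantage of making well-definedness of $\bar{c}_{n,k}$ at non-vertex points automatic.
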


\begin{proof}
Consider an isomorphism $\bar{c}_{n,k} : \Delta _{n,k}\to \Delta _{n,n-k}$ which sends  $x=\sum\limits _{J\subset \{1,\ldots, n\}, \|J\|=k} \alpha_{J}\Lambda _{J}$ to $\bar{c}_{n,k}(x)= \sum\limits _{J\subset \{1,\ldots, n\}, \|J\|=k} \alpha_{\bar {J}}\Lambda _{\bar{J}}$, where 
$\bar{J}= \{1, \ldots n\}\setminus J$. It directly follows from~\eqref{PLC} that $\bar{c}_{n,k}$ is a  required combinatorial isomorphism.
\end{proof}

For $n=2k$ we obtain the  automorphism $\bar{c}_{2k,k} : \Delta _{2k,k}\to \Delta _{2k,k}$ which is  given by
\begin{equation}\label{c2kk}
\bar{c}_{2k,k}(x) = \sum\limits _{J\subset \{1,\ldots ,2k\}, \|J\|=k} \alpha_{\bar {J}}({\bf 1} - \Lambda _{J}),
\end{equation}
where   $x=\sum\limits _{J\subset \{1,\ldots, n\}, \|J\|=k} \alpha_{J}\Lambda _{J}$ and ${\bf 1}=(1,\ldots ,1)$.

Together with Lemma~\ref{cnk} this implies:

\begin{lem}
 For $n=2k$ the isomorphism $\bar{c}_{2k,k} : \Delta _{2k,k}\to \Delta _{2k,k}$  is given by
\begin{equation}\label{form}
x \to {\bf 1}-x,
\end{equation}
where ${\bf 1}= (1,\ldots ,1)$.
\end{lem}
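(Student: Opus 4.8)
The plan is to use that both $\bar{c}_{2k,k}$ and the candidate map $x\mapsto{\bf 1}-x$ are affine self-maps of the hypersimplex $\Delta_{2k,k}$, so it suffices to check that they agree on the vertices. Recall from Lemma~\ref{cnk} that $\bar{c}_{2k,k}$ is a combinatorial isomorphism carrying the vertex $\Lambda_J$ of $\Delta_{2k,k}$ to the complementary vertex $\Lambda_{\bar J}$, where $\bar J=\{1,\dots,2k\}\setminus J$; in particular it is affine in the barycentric coordinates $\alpha_J$. The map $x\mapsto{\bf 1}-x$ is obviously affine. Since every point of the hypersimplex is a convex combination $x=\sum_J\alpha_J\Lambda_J$ with $\sum_J\alpha_J=1$, and affine maps preserve such combinations, two affine maps that coincide on all vertices $\Lambda_J$ coincide on all of $\Delta_{2k,k}$.

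The key identity is $\Lambda_{\bar J}={\bf 1}-\Lambda_J$, which holds precisely because the $0/1$-indicator of $\bar J$ is the complement of the indicator of $J$. It is worth noting that for $n=2k$ the complement $\bar J$ of a $k$-subset is again a $k$-subset, which is exactly what makes $\bar{c}_{2k,k}$ an \emph{automorphism} of $\Delta_{2k,k}$ and keeps both sides in the same ambient space. Evaluating on a vertex, Lemma~\ref{cnk} gives $\bar{c}_{2k,k}(\Lambda_J)=\Lambda_{\bar J}$, while the candidate map gives ${\bf 1}-\Lambda_J=\Lambda_{\bar J}$; the two values agree for every $k$-subset $J$.

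To finish, I would invoke the affineness reduction: for an arbitrary $x=\sum_J\alpha_J\Lambda_J\in\Delta_{2k,k}$ with $\sum_J\alpha_J=1$,
\[
\bar{c}_{2k,k}(x)=\sum_J\alpha_J\,\bar{c}_{2k,k}(\Lambda_J)=\sum_J\alpha_J({\bf 1}-\Lambda_J)={\bf 1}\sum_J\alpha_J-\sum_J\alpha_J\Lambda_J={\bf 1}-x,
\]
which is the asserted formula~\eqref{form}; equivalently, this is the simplification of the right-hand side of~\eqref{c2kk}. There is no serious obstacle here: the whole content is the complementation identity $\Lambda_{\bar J}={\bf 1}-\Lambda_J$ together with the normalization $\sum_J\alpha_J=1$ inherited from $x$ lying in the hypersimplex. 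The only thing to watch is the bookkeeping of complementary index sets, since an off-by-complement slip would collapse the answer to $x$ rather than ${\bf 1}-x$.
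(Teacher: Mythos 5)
Your proof is correct, but it takes a different route than the paper's. The paper's argument stays inside the Pl\"ucker framework: it realizes $x\in\Delta_{2k,k}$ as a moment-map image $x=\mu_{2k,k}(L)$, writes each coordinate $x_i$ as a normalized sum of $|P^{J}(L)|^2$ over $J\ni i$, reads off from~\eqref{c2kk} that the $i$-th coordinate of the image is the complementary normalized sum over $J\not\ni i$, and then simplifies using the relation $\sum_i x_i=k$. You instead argue synthetically on the polytope: both maps are affine, they agree on vertices because $\bar{c}_{2k,k}(\Lambda_J)=\Lambda_{\bar J}={\bf 1}-\Lambda_J$, and the conclusion follows from $\sum_J\alpha_J=1$. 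Your version buys two things: it never needs to realize $x$ by a point of the Grassmannian, and, as a by-product, your final display shows that the defining formula~\eqref{c2kk} is independent of the (non-unique) barycentric representation of $x$ --- a well-definedness point the paper leaves implicit, and which is also exactly what justifies your claim that $\bar{c}_{2k,k}$ is affine (affineness is not a consequence of being a combinatorial isomorphism, but of the definitional formula, so your ``in particular it is affine'' leans on~\eqref{c2kk} rather than on Lemma~\ref{cnk} alone). The paper's version buys a direct verification in the same coordinates in which $\bar{c}_{n,k}$ was constructed from the Pl\"ucker identity $P^{J}(L)=\pm P^{\bar J}(c_{n,k}(L))$. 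Your closing caution about the off-by-complement bookkeeping is well placed: read with the literal index pairing, the displayed formula~\eqref{c2kk} pairs $\alpha_{\bar J}$ with $\Lambda_{\bar J}$ and would reindex to the identity map, so the correct reading (coefficient $\alpha_J$ attached to ${\bf 1}-\Lambda_J$), which both you and the paper's own computation use, is the intended one.
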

\begin{proof}

For $x=(x_1,\ldots, x_{2k})\in \Delta _{2k, k}$ we have that   
\[
x_i = \frac{1}{\sum\limits _{J, \|J\|=k}|P^{J}(L)|^2}\sum\limits_{J, i\in J, \|J\|=k}|P^{J}(L)|^2 \;  \text{for some}\;  L\in G_{n,k}.
\]
Then from~\eqref{c2kk} it follows that
\[
\bar{c}_{2k,k}(x)_{i} =  \frac{1}{\sum\limits _{J\subset \{1,\ldots ,n\}, \|J\|=k}|P^{J}(L)|^2}\sum\limits_{J\subset \{1,\ldots .n\}\setminus \{i\}, \ |J\|=k}|P^{J}(L)|^2 =
\]
\[
\frac{1}{k}(x_2+\ldots +x_n-(k-1)x_1, \ldots, x_1+\ldots +x_{n-1}-(k-1)x_n) = (1-x_1,\ldots, 1-x_n),
\]
since $x_1+\ldots x_n=k$. 
\end{proof}

Altogether we obtain:

\begin{prop}\label{ffinal}
An element  $f\in G_{n,k}$  satisfies the assumption~\eqref{diag}  if and only if  $f\in T^{n-1}\rtimes S_{n}$ for $n\neq 2k$ and $f\in \Z _{2}\times (T^{n-1}\rtimes S_{n})$ for $n=2k$.
\end{prop}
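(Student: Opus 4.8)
The plan is to read this statement as a corollary that assembles the preceding lemmas, and to prove the two implications separately. The forward (``only if'') direction is essentially already established: if $f$ satisfies~\eqref{diag} with combinatorial isomorphism $\bar f$, then since $\mu_{n,k}$ restricts to a bijection between the fixed-point set $W_0$ and the vertices of $\Delta_{n,k}$, and $\bar f$ permutes those vertices, the automorphism $f$ must leave $W_0$ invariant. Proposition~\ref{sub} then forces $f\in T^{n-1}\rtimes S_n$ for $n\neq 2k$, and $f$ into the larger group containing the duality $\Z_2$ when $n=2k$. (The only bookkeeping here is to note that the group $T^{n-1}\times(\Z_2\times S_n)$ appearing in the second Lemma coincides with $\Z_2\times(T^{n-1}\rtimes S_n)$, since the duality involution commutes with everything while $S_n$ acts on $T^{n-1}$ by coordinate permutations.)

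For the reverse (``if'') direction, the strategy is to check~\eqref{diag} on a generating set and invoke closure under the group operations. First I would observe that the set of automorphisms satisfying~\eqref{diag} is itself a subgroup of $\mathrm{Aut}\,G_{n,k}$: if $f_1,f_2$ satisfy~\eqref{diag} with $\bar f_1,\bar f_2$, then $\mu_{n,k}\circ(f_1\circ f_2)=\bar f_1\circ\bar f_2\circ\mu_{n,k}$, and $\bar f_1\circ\bar f_2$ is again a combinatorial isomorphism; similarly $f^{-1}$ satisfies~\eqref{diag} with $\bar f^{-1}$. Hence it suffices to verify the condition on generators of $T^{n-1}\rtimes S_n$ (resp. $\Z_2\times(T^{n-1}\rtimes S_n)$).

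The three generating families are handled by the results already in hand. For $f\in T^{n-1}$, the $T^n$-invariance of $\mu_{n,k}$ lets one take $\bar f=\mathrm{id}_{\Delta_{n,k}}$, as noted after the second Lemma. For $\mathfrak{s}\in S_n$, the third Lemma provides the combinatorial isomorphism $\bar{\mathfrak{s}}$ given by the coordinate permutation. For $n=2k$, the duality $c_{2k,k}$ generating $\Z_2$ satisfies~\eqref{diag} by Lemma~\ref{cnk}, with $\bar c_{2k,k}$ the involution $x\mapsto\mathbf{1}-x$. Since $T^{n-1}$ and $S_n$ (together with $c_{2k,k}$ when $n=2k$) generate the full group, the subgroup observation closes the argument.

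I do not expect a genuine obstacle here: the conceptual work was already carried out, namely the fixed-point rigidity of Proposition~\ref{sub} for the forward direction and the explicit constructions of $\bar{\mathfrak{s}}$ and $\bar c_{2k,k}$ for the reverse direction. The only point requiring care is the closure-under-composition remark, which upgrades the per-generator verifications to the whole group; without it one would only know that certain distinguished automorphisms satisfy~\eqref{diag} rather than every element of $T^{n-1}\rtimes S_n$.
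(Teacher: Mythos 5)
Your proof is correct and follows essentially the same route as the paper: the forward direction via invariance of the fixed-point set $W_0$ together with Proposition~\ref{sub}, and the reverse direction by verifying~\eqref{diag} on $T^{n-1}$ (taking $\bar f = id_{\Delta_{n,k}}$), on $S_n$ (the coordinate-permutation lemma), and, when $n=2k$, on the duality $c_{2k,k}$ via Lemma~\ref{cnk}. Your explicit observation that the automorphisms satisfying~\eqref{diag} form a subgroup of $\text{Aut}\,G_{n,k}$ is the one step the paper leaves implicit in its ``altogether we obtain'' assembly, and making it explicit is a worthwhile clarification rather than a different approach.
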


For the preimages of the points form $\Delta _{n,k}$ by the moment map it  holds:

\begin{lem}
 If $f\in \text{Aut}G_{n,k}$ satisfies  assumption~\eqref{diag} then    $\mu^{-1}_{n,k}(x)$ is homeomorphic to $\mu ^{-1}_{n,k}(\bar{f}(x))$ for any $x\in \Delta _{n,k}$.
\end{lem}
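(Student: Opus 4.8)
The plan is to exploit the commutativity of diagram~\eqref{diag} to transport homeomorphisms fiberwise. First I would observe that the hypothesis $\mu_{n,k}\circ f = \bar f \circ \mu_{n,k}$ says precisely that $f$ carries the fiber $\mu_{n,k}^{-1}(x)$ into the fiber over $\bar f(x)$: indeed, if $L\in \mu_{n,k}^{-1}(x)$, then $\mu_{n,k}(f(L)) = \bar f(\mu_{n,k}(L)) = \bar f(x)$, so $f(L)\in \mu_{n,k}^{-1}(\bar f(x))$. Thus $f$ restricts to a map
\[
f\big|_{\mu_{n,k}^{-1}(x)} : \mu_{n,k}^{-1}(x) \lra \mu_{n,k}^{-1}(\bar f(x)).
\]

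Next I would argue that this restriction is a homeomorphism. Since $f\in \text{Aut}G_{n,k}$ is an automorphism, it is in particular a homeomorphism of $G_{n,k}$ onto itself, with a continuous inverse $f^{-1}$. By Proposition~\ref{ffinal} the combinatorial map $\bar f$ is induced by an element of $T^{n-1}\rtimes S_n$ (or $\Z_2\times(T^{n-1}\rtimes S_n)$ when $n=2k$), and in every such case $\bar f$ is itself a bijection of $\Delta_{n,k}$ with a combinatorial inverse $\bar f^{-1}$; the inverse automorphism $f^{-1}$ then satisfies the analogous commutativity $\mu_{n,k}\circ f^{-1} = \bar f^{-1}\circ \mu_{n,k}$. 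Hence $f^{-1}$ restricts to a continuous map $\mu_{n,k}^{-1}(\bar f(x)) \to \mu_{n,k}^{-1}(x)$ which, by construction, is a two-sided inverse of $f\big|_{\mu_{n,k}^{-1}(x)}$. Both restrictions are continuous because $f$ and $f^{-1}$ are continuous and the fibers carry the subspace topology, so the restriction is a homeomorphism.

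The only point requiring a little care is the existence of the combinatorial inverse $\bar f^{-1}$ together with the verification that $f^{-1}$ again satisfies~\eqref{diag}; I expect this to be the main (mild) obstacle, though it is essentially formal. If $\bar f$ is a combinatorial isomorphism of $\Delta_{n,k}$ fitting into the commuting square, then applying $\mu_{n,k}\circ f = \bar f\circ \mu_{n,k}$ to $f^{-1}(L)$ gives $\mu_{n,k}(L) = \bar f(\mu_{n,k}(f^{-1}(L)))$, and composing with $\bar f^{-1}$ yields $\bar f^{-1}(\mu_{n,k}(L)) = \mu_{n,k}(f^{-1}(L))$, which is exactly the commuting diagram for $f^{-1}$ with $\overline{f^{-1}} = \bar f^{-1}$. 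With this in hand the argument closes: transporting by $f$ and back by $f^{-1}$ establishes that $\mu_{n,k}^{-1}(x)$ and $\mu_{n,k}^{-1}(\bar f(x))$ are homeomorphic for every $x\in\Delta_{n,k}$, as claimed.
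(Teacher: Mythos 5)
Your proposal is correct and is essentially the paper's own argument: the paper notes that~\eqref{diag} gives $f : \mu^{-1}_{n,k}(x) \to \mu^{-1}_{n,k}(\bar{f}(x))$ and concludes by the fact that $f$ is an automorphism, which is exactly your restriction-plus-inverse argument spelled out in detail. Your appeal to Proposition~\ref{ffinal} is unnecessary (the assumption already makes $\bar{f}$ a combinatorial isomorphism, hence invertible), but this is harmless.
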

\begin{proof}
From~\eqref{diag} it directly follows that $f : \mu ^{-1}_{n,k}(x) \to \mu ^{-1}_{n,k}(\bar{f}(x))$. Since $f$ is an automorphism the statement follows.
\end{proof}

Since any automorphism for $G_{n,k}$ which commutes with $T^n$-action produces a homeomorphism of $G_{n,k}/T^n$ and the moment map is $T^n$-invariant,  we deduce from Proposition~\ref{ffinal} the following.

\begin{cor}\label{cormain}
For $n\neq 2k$,  the subspaces $\mu ^{-1}_{n,k}(x)/T^n, \mu ^{-1}_{n,k}(\mathfrak{s}(x))/T^n \subset G_{n,k}/T^n$ are homeomorphic for any $x\in \Delta _{n,k}$  and any  $\mathfrak{s} \in S_n$. For $n=2k$, the subspaces  $\mu ^{-1}_{n,k}(x)/T^n, \mu ^{-1}_{n,k}(\bar{f}(x))/T^n$  are homeomorphic for any $x\in \Delta_ {n,k}$ and any  $\bar{f}\in S_{n}$ or $\bar{f} = \bf{1} - x$.
\end{cor}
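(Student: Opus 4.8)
The plan is to realize each admissible symmetry of $\Delta_{n,k}$ by an honest automorphism of $G_{n,k}$ that simultaneously commutes with the $T^n$-action and fits into the commutative square~\eqref{diag}, and then to descend the resulting fibrewise homeomorphism to the orbit spaces. Concretely, I would first invoke Proposition~\ref{ffinal}: for $n\neq 2k$ every automorphism satisfying~\eqref{diag} lies in $T^{n-1}\rtimes S_n$, and for $n=2k$ in $\Z_2\times(T^{n-1}\rtimes S_n)$. Comparing this with the first Lemma of this section, which identifies exactly the same groups as the automorphisms commuting with the canonical $T^n$-action, I observe that the two properties---commuting with $T^n$ and satisfying~\eqref{diag}---hold simultaneously for all maps in question; in particular~\eqref{diag} already forces equivariance. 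Thus the permutations $\mathfrak{s}\in S_n$, and for $n=2k$ also the duality involution $c_{2k,k}$, are $T^n$-equivariant automorphisms fitting into~\eqref{diag}.

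The second step is the descent argument. Fix such an $f$ with induced map $\bar{f}$ on $\Delta_{n,k}$. Because $\mu_{n,k}$ is $T^n$-invariant, both $\mu^{-1}_{n,k}(x)$ and $\mu^{-1}_{n,k}(\bar{f}(x))$ are $T^n$-invariant subsets of $G_{n,k}$, and by the preceding Lemma the restriction of $f$ is a homeomorphism between them. Since $f$ commutes with the $T^n$-action it carries $T^n$-orbits to $T^n$-orbits, so this restriction is $T^n$-equivariant; applying the same reasoning to $f^{-1}$, I conclude that $f$ descends to a homeomorphism of the orbit spaces $\mu^{-1}_{n,k}(x)/T^n \cong \mu^{-1}_{n,k}(\bar{f}(x))/T^n$.

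Finally I would read off $\bar{f}$ in each case. For $\mathfrak{s}\in S_n$ the earlier Lemma shows that the induced combinatorial automorphism is precisely the coordinate permutation $\bar{\mathfrak{s}}(x)=\mathfrak{s}(x)$, which yields the first homeomorphism for all $n$. For $n=2k$ the duality diffeomorphism $c_{2k,k}$ induces, by the Lemma computing~\eqref{form}, the map $x\mapsto{\bf 1}-x$, giving the second homeomorphism. I do not expect a genuine obstacle here, since the content has been front-loaded into Proposition~\ref{ffinal} and into the identification~\eqref{form} of the duality action on $\Delta_{2k,k}$. The only point requiring a little care is the passage from a fibrewise homeomorphism of preimages to a homeomorphism of their $T^n$-quotients, which is exactly where equivariance (rather than merely~\eqref{diag}) is used; that the relevant automorphisms commute with $T^n$ is guaranteed by the coincidence of the two characterizations noted above.
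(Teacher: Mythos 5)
Your proposal is correct and takes essentially the same route as the paper: the paper likewise combines Proposition~\ref{ffinal}, the lemma that any automorphism satisfying~\eqref{diag} restricts to homeomorphisms $\mu^{-1}_{n,k}(x)\to\mu^{-1}_{n,k}(\bar{f}(x))$, and the observation that automorphisms commuting with the $T^n$-action descend to homeomorphisms of $G_{n,k}/T^n$, together with the explicit identifications $\bar{\mathfrak{s}}(x)=\mathfrak{s}(x)$ and $\bar{c}_{2k,k}(x)={\bf 1}-x$. Your write-up merely makes explicit the descent-to-quotients step that the paper leaves implicit in the sentence preceding the corollary.
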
 

\section{Grassmann manifolds  $G_{n,2}$}
\subsection{Admissible polytopes}
We first recall the notions of an  admissible polytope and a stratum as introduced in~\cite{BT-2}. Some other equivalent definition may be found in~\cite{GS}. Let $M_{ij}=\{L\in G_{n,2} | P^{ij}(L)\neq 0\}$, where $\{i,j\}\subset \{1,\ldots, n\}$, $i<j$ be the standard Pl\"ucker charts on $G_{n,2}$    and put $Y_{ij}= G_{n,k}\setminus M_{ij}$. A nonempty set 
\[
W_{\sigma} = \big(\bigcap\limits_{\{i,j\}\in \sigma}M_{ij}\big) \cap  \big(\bigcap\limits_{\{i,j\}\notin \sigma}Y_{ij}\big)
\]
is called a stratum, where $\sigma \subset {n\choose 2}$. For a stratum $W_{\sigma}$ we have  that $\mu (W_{\sigma}) = \stackrel{\circ}{P}_{\sigma}$, where $P_{\sigma}$ is a convex hull of the vertices $\Lambda _{ij}$, $\{i,j\}\in \sigma$. A polytope $P_{\sigma}$ which can be obtained in this way is said  to be an admissible polytope. In addition, all points from $W_{\sigma}$ has the same stabilizer $T_{\sigma}\subseteq  T^n$ and the torus $T^{\sigma} = T^n/T_{\sigma}$ acts freely on $W_{\sigma}$, see~\cite{BT-2}.

The boundary $\partial \Delta _{n,2}$  consists of $n$ copies of the hypersimplex $\Delta _{n-1,2}$ and $n$ copies of the simplex  $\Delta _{n-1,1}$. Moreover,  $\mu ^{-1}(\partial \Delta _{n,2}) = n\# \mu ^{-1}(\Delta _{n-1, 2}) \cup n\#\mu ^{-1}(\Delta _{n-1, 1}) = n\# G_{n-1, 2} \cup n\# \C P^{n-2}$ . It follows that the admissible polytopes for $G_{n,2}$ can be described inductively, by describing those which intersect the  interior of $\Delta _{n,2}$.   We have that $\dim \Delta _{n,2} = n-1$  and that  $\dim P_{\sigma}$ is equal to the dimension of the torus $T^{\sigma}$  which acts freely on the stratum $W_{\sigma}$ for any admissible polytope $P_{\sigma}$ in $\Delta _{n,2}$, see~\cite{BT-2}.

Note that one can find   in~\cite{Kap} as well the description  of  the admissible polytopes $P_{\sigma}\subset \Delta _{n,2}$  for $T^n$-action in $G_{n,2}$  in terms of matroid theory and Chow quotient $G_{n,2}\!/\!/ (\C ^{\ast})^{n}$:   there are first described all matroid decomposition of $\Delta _{n,2}$ and afterwards it is proved that there are all realizable, that is  all of them  come from some non-empty Chow strata in $G_{n,2}\!/\!/ (\C ^{\ast})^{n}$.

We provide  here the new purely analytical description  of the admissible polytopes for $G_{n,2}$,  which is going to be suitable for our purpose of the description of an  orbit space $G_{n,2}/T^n$.

We  find  useful to point to the following observation. Let  $pr _{i} : \R ^{n}\to \R _{i}^{1}$ denotes the  projection on $i$-th coordinate, $1\leq i\leq n$. 
\begin{lem}\label{polytopbound}
A point $x\in \Delta _{n,k}$ belongs to the boundary $\partial \Delta _{n,k}$ if and only if $pr _{i}(x) =0$ or $pr _{i}(x)=1$ for some $1\leq i\leq n$. Moreover, a polytope $P$ which is the convex span of some vertices of $\Delta _{n,k}$ belongs to $\partial \Delta _{n,k}$ if and only if there exists $1\leq i\leq n$ such that  $pr _{i}(x)=0$ for all $x\in P$ or $pr _{i}(x)=1$ for all $x\in P$.  
\end{lem}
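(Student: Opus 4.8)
The plan is to start from the standard inequality description of the hypersimplex. Since $\mu_{n,k}$ sends $L$ to the convex combination $\sum_{\|J\|=k}\frac{|P^{J}(L)|^2}{\sum|P^{J}(L)|^2}\Lambda_{J}$ of the vertices $\Lambda_{J}$, the image $\Delta_{n,k}$ is exactly the convex hull of the $\Lambda_{J}$, and one checks in the usual way that
\[
\Delta_{n,k}=\Big\{x\in\R^n : \sum_{i=1}^n pr_{i}(x)=k,\ 0\le pr_{i}(x)\le 1\ \text{for all}\ i\Big\}.
\]
In particular its affine hull is the hyperplane $H=\{x : \sum_i pr_{i}(x)=k\}$, so $\dim\Delta_{n,k}=n-1$ and $\partial\Delta_{n,k}$ is the boundary taken relative to $H$. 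Thus the first assertion is equivalent to saying that the relative interior $\stackrel{\circ}{\Delta}_{n,k}$ is precisely the set of points with $0<pr_{i}(x)<1$ for every $i$, and I would prove the two inclusions separately by elementary perturbation arguments inside $H$.

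For the inclusion that a point $x$ with $0<pr_{i}(x)<1$ for all $i$ lies in $\stackrel{\circ}{\Delta}_{n,k}$: for any direction $d\in\R^n$ with $\sum_i d_i=0$ (that is, $d$ tangent to $H$), for all sufficiently small $\varepsilon$ the point $x+\varepsilon d$ still satisfies $0<pr_{i}(x+\varepsilon d)<1$ and $\sum_i pr_{i}(x+\varepsilon d)=k$, hence lies in $\Delta_{n,k}$; so a whole $H$-neighbourhood of $x$ is contained in $\Delta_{n,k}$. Conversely, if $pr_{i}(x)=0$ for some $i$, then choosing any $j\ne i$ the points $x-\varepsilon e_i+\varepsilon e_j$ lie in $H$, converge to $x$, but have negative $i$-th coordinate, so they are outside $\Delta_{n,k}$; hence $x\in\partial\Delta_{n,k}$. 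The case $pr_{i}(x)=1$ is symmetric, using $x+\varepsilon e_i-\varepsilon e_j$. This establishes the first claim.

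For the second claim, write $P=\mathrm{conv}\{\Lambda_{J} : J\in\mathcal J\}$ for a collection $\mathcal J$ of $k$-subsets; since each $\Lambda_{J}$ is a vertex of $\Delta_{n,k}$ lying in $P$, it is also a vertex of $P$, so the $\Lambda_{J}$, $J\in\mathcal J$, are exactly the vertices of $P$, and a general point is $x=\sum_{J\in\mathcal J}\alpha_J\Lambda_{J}$ with $\alpha_J\ge 0$, $\sum\alpha_J=1$, giving $pr_{i}(x)=\sum_{J\ni i}\alpha_J$. The direction $(\Leftarrow)$ is immediate from the first claim: if some $i$ has $pr_{i}\equiv 0$ (or $\equiv 1$) on $P$, then every point of $P$ has a coordinate in $\{0,1\}$ and hence lies in $\partial\Delta_{n,k}$. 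For the substantive direction $(\Rightarrow)$ I would evaluate at the barycenter $b=\frac{1}{|\mathcal J|}\sum_{J\in\mathcal J}\Lambda_{J}$, which is a relative-interior point of $P$ and therefore lies in $\partial\Delta_{n,k}$; by the first claim some index $i$ satisfies $pr_{i}(b)\in\{0,1\}$. As all weights $\alpha_J=1/|\mathcal J|$ are strictly positive and $pr_{i}(b)=\tfrac{1}{|\mathcal J|}\#\{J\in\mathcal J: i\in J\}$, the value $pr_{i}(b)=0$ forces $i\notin J$ for every $J\in\mathcal J$, whence $pr_{i}(x)=0$ for all $x\in P$, while $pr_{i}(b)=1$ forces $i\in J$ for every $J\in\mathcal J$, whence $pr_{i}(x)=1$ for all $x\in P$.

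I expect the main obstacle to be exactly this last step: the first claim only guarantees, at each individual point, that \emph{some} coordinate hits $0$ or $1$, and a priori the offending index could vary from point to point, so the hypothesis $P\subset\partial\Delta_{n,k}$ does not obviously single out one coordinate uniformly. The resolution is the observation that the conditions $pr_{i}(x)=0$ and $pr_{i}(x)=1$ admit a purely combinatorial reading in terms of which $J\in\mathcal J$ contain $i$, so that as soon as either holds at a single relative-interior point with all barycentric weights positive, it automatically holds identically on all of $P$.
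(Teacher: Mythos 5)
Your proof is correct. Note that the paper itself states Lemma~\ref{polytopbound} without any proof, treating it as an elementary observation, so there is no ``paper approach'' to compare against; your write-up simply supplies the argument the authors left implicit. The two ingredients you use are the right ones: the inequality description $\Delta_{n,k}=\{x:\sum_i pr_i(x)=k,\ 0\le pr_i(x)\le 1\}$ together with perturbation inside the affine hull settles the pointwise claim, and your barycenter evaluation is exactly the device needed for the substantive direction of the second claim. You correctly identified the only subtle point — that the first claim alone lets the index $i$ with $pr_i(x)\in\{0,1\}$ vary from point to point — and your resolution is sound: at the barycenter all weights $\alpha_J=1/|\mathcal J|$ are strictly positive, so $pr_i(b)=0$ forces $i\notin J$ for every $J\in\mathcal J$ (and $pr_i(b)=1$ forces $i\in J$ for every $J$), which then propagates the identity $pr_i\equiv 0$ (respectively $\equiv 1$) to all of $P$ by linearity. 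The only step you wave at (``one checks in the usual way'') is the standard facet description of the hypersimplex, which the paper also takes for granted, so nothing essential is missing.
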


\begin{prop}
Any admissible polytope for $G_{n,2}$ whose dimension is $\leq n-3$ belongs to the boundary $\partial \Delta _{n,2}$ for $\Delta _{n,2}$.
\end{prop}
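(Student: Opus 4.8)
The plan is to read off the combinatorial type of an admissible $\sigma$ from the rank-$2$ matroid it carries, to compute $\dim P_\sigma$ from that type, and then to invoke the boundary criterion of Lemma~\ref{polytopbound}.

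First I would recover the structure of $\sigma$. Since $W_\sigma\neq\emptyset$, pick $L\in W_\sigma$ and represent it as the row space of a $2\times n$ matrix with columns $c_1,\dots,c_n\in\C^2$; then $P^{ij}(L)=\det(c_i,c_j)$, so $\{i,j\}\in\sigma$ exactly when $c_i$ and $c_j$ are linearly independent. Call $i$ a \emph{loop} if $c_i=0$, and declare two non-loops equivalent when $c_i$ and $c_j$ are proportional. As proportionality of nonzero vectors in $\C^2$ is an equivalence relation (the lines they span), the non-loops split into classes $A_1,\dots,A_r$, and $\{i,j\}\in\sigma$ iff $i$ and $j$ lie in distinct classes. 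Thus the graph on $\{1,\dots,n\}$ with edge set $\sigma$ is the complete multipartite graph $K_{A_1,\dots,A_r}$ together with the loops as isolated vertices; moreover the matrix has rank $2$, so its columns span $\C^2$ and lie on at least two distinct lines, giving $r\geq 2$.

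Next I would compute $\dim P_\sigma$ from this data, writing $z$ for the number of loops. The cleanest route is the identity $\dim P_\sigma=\dim T^\sigma$ recalled above. The subtorus $T_\sigma\subset T^n$ fixing $L$ scales each column $c_i$ by $t_i$ and preserves $L$ iff there is $g\in\GL_2$ with $g c_i=t_i c_i$ for all $i$; hence $g$ acts on each line $A_s$ by a single scalar, while the $z$ loop-coordinates are unconstrained. When $r\geq 3$ an element of $\GL_2$ fixing three distinct lines is scalar, so $\dim T_\sigma=1+z$ and $\dim P_\sigma=n-1-z$; when $r=2$ the two lines may be scaled independently, $\dim T_\sigma=2+z$, and $\dim P_\sigma=n-2-z$. (Equivalently, one checks directly that the vertex vectors $\Lambda_{ij}=e_i+e_j$ affinely span the whole hyperplane $\{\sum x_k=2\}$ when $r\geq 3$ and a codimension-one subspace of it when $r=2$, by exhibiting enough affinely independent $\Lambda_{ij}$.) In both cases $\dim P_\sigma=n-1-z-[\,r=2\,]$.

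Finally, suppose $\dim P_\sigma\leq n-3$. The formula gives $z+[\,r=2\,]\geq 2$, and since $[\,r=2\,]\leq 1$ this forces $z\geq 1$: there is a loop $i$. As $i$ belongs to no edge of $\sigma$, every vertex $\Lambda_{jk}$ of $P_\sigma$ has $i$-th coordinate $0$, so $pr_i(x)=0$ for all $x\in P_\sigma$, and Lemma~\ref{polytopbound} yields $P_\sigma\subset\partial\Delta_{n,2}$. I expect the main obstacle to be the dimension count, and in particular the borderline bipartite case $r=2$, where $P_\sigma$ has dimension exactly $n-2$ with no loop present; it is precisely the admissibility of $\sigma$ (its matroid origin, forcing the complete multipartite shape rather than an arbitrary graph such as a disjoint pair of edges) that rules out dimension $\leq n-3$ without a loop.
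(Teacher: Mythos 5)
Your proof is correct, and it takes a genuinely different route from the paper. The paper argues purely combinatorially on the polytope: it places the vertex $\Lambda_{12}$ at $P_{\sigma}$ (using the $S_n$-action), takes $q=\dim P_{\sigma}$ adjacent vertices, observes that each of them has exactly one $1$ among the last $n-2$ coordinates, and then applies a pigeonhole argument --- since $q\leq n-3$, these vertices miss some coordinate $x_j$, $j\geq 3$, so the affine hull of $P_{\sigma}$ (which is spanned by these $q+1$ vertices) lies in $\{x_j=0\}$, hence in $\partial\Delta_{n,2}$. Your argument instead classifies the stratum by its rank-$2$ matroid data (loops and parallel classes $A_1,\dots,A_r$), computes the stabilizer $T_{\sigma}$ explicitly, and invokes the identity $\dim P_{\sigma}=\dim T^{\sigma}$ recalled in the paper from~\cite{BT-2}, arriving at the exact formula $\dim P_{\sigma}=n-1-z$ for $r\geq 3$ and $n-2-z$ for $r=2$, where $z$ is the number of loops; then $\dim P_{\sigma}\leq n-3$ forces a loop, and a loop kills a coordinate on every vertex. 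What your approach buys is strictly more information: the dimension formula shows that the \emph{only} admissible polytopes meeting $\stackrel{\circ}{\Delta}_{n,2}$ have dimension $n-1$ (when $z=0$, $r\geq 3$) or $n-2$ (when $z=0$, $r=2$), which is exactly the structural fact the paper establishes separately later (Theorem~\ref{n-2} and the bipartite description of interior $(n-2)$-polytopes with $p(n-p)$ vertices follow immediately from your $r=2$ case). What the paper's proof buys is brevity and self-containedness at the level of the polytope, though it leans implicitly on the fact that edges of $P_{\sigma}$ at $\Lambda_{12}$ go to vertices of the form $\Lambda_{1s}$ or $\Lambda_{2q}$ (edges of a matroid polytope are parallel to $e_i-e_j$), a point your stabilizer computation avoids entirely.
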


\begin{proof}
Let $P_{\sigma}$ be an admissible polytope and   let $\dim P_{\sigma}= q\leq n-3$.   Because of the action of the symmetric group, we can assume that the $V_{0} =(1,1,0,\ldots, 0)$ is a vertex for $P_{\sigma}$. Then  there exist $q$ vertices $V_1,\ldots , V_{q}$ for $P$ which are adjacent to $V_{0}$ and the dimension of the subspace $G$ spanned by the vectors  $V_0-V_1, \ldots ,V_{0}-V_{q}$ is equal  to $q$. The polytope $P_{\sigma}$ is  the intersection of hypersimplex $\Delta _{n,2}$ and the $q$-dimensional  plane which contains the point $V_{0}$ and which is  directed  by  the subspace $G$. Since $V_1, \ldots V_q$ are adjacent  to $V_0$ they must have  $0$ and $1$ at the first two coordinate  places. Thus, any  of $V_{i}$'s, $1\leq i\leq q$  has exactly one $1$ and the last $n-2$ places. Since 
$q\leq n-3$, all $V_{i}$, $1\leq i\leq q$  must have one common coordinate $x_j$, $j\geq 3$ which is equal to zero. It implies that $P_{\sigma}$ belongs to the boundary $\partial \Delta _{n,2}$.
\end{proof}

\subsection{Admissible polytopes of dimension $n-2$}\label{adm-n-2}

The admissible polytopes of   dimension $n-2$ can be divided into those which  belong to the boundary $\partial \Delta _{n, 2}$ and those which intersect the interior of $\Delta _{n,2}$.  Those on $\partial \Delta _{n,2}$ are given by $n$ hypersimplices $\Delta _{n-1,2}$ and their $(n-2)$-dimensional admissible polytopes, and $\Delta _{n-1,1}$ which is the simplex $\Delta ^{n-2}$ .

We describe here those admissible $(n-2)$-dimensional polytopes which  intersect the interior of $\Delta _{n,2}$.

Denote by $\Pi _{\{i,j\}}$  the set of all  $(n-2)$-dimensional planes  which contain the vertex  $\Lambda _{\{i,j\}}=e_i+e_j$,  which are  parallel to  the  edges of $\Delta _{n,2}$ that are adjacent  to  $\Lambda_{\{i,j\}}$, and which intersect the interior of $\Delta _{n,2}$.  The edges  adjacent $\Lambda_{\{i,j\}}$   are given by
\[
e_{\{j,s\}} = \Lambda _{\{i,j\}} - \Lambda _{\{i,s\}},\; s\neq i,j, 
\]
\[
e_{\{i,q\}} = \Lambda_{\{i,j\}}-\Lambda _{\{q,j\}}, \; q\neq i,j.
\]
Note that the vectors $e_{\{j,m\}}$ and $e_{\{i,s\}}$ represent the complementary roots for $U(2)\times U(n-2)$ related to $U(n)$ and $G_{n,2}$ can be represented as the homogeneous space $U(n)/U(2)\times U(n-2)$.
 The planes $\Pi _{\{i,j\}}$ can be described  as follows:
\begin{lem}\label{planes}
The set $\Pi _{\{i,j\}}$ consists of the planes 
\[
\alpha _{\{i,j\}, l}^{S} = \Lambda _{\{i,j\}} + F_{l, S},\;\; 1\leq l\leq n-3,\; S\subset \{1, \ldots , n\}, \; \| S\| =l 
\]
and  $i, j\notin S$. The directrix $F_{l, S}$  is  spanned   by the vectors 
\[
e_{\{j, s\}}\;  \text{and}\; e_{\{i,q\}},\;\;  s\in S, \; \;  q\notin S \cup \{i,j\}.
\]

\end{lem}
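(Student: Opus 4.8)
The plan is to convert the two conditions defining $\Pi_{\{i,j\}}$ — that a plane be $(n-2)$-dimensional and that it meet the interior of $\Delta_{n,2}$ — into purely combinatorial conditions on the set of adjacent edges spanning its directrix, and then to read off that the directrix must be one of the $F_{l,S}$. Write $M=\{1,\dots,n\}\setminus\{i,j\}$, so that the $2(n-2)$ adjacent edge directions are $e_{\{i,q\}}=e_i-e_q$ and $e_{\{j,s\}}=e_j-e_s$ for $q,s\in M$. A plane $\alpha\in\Pi_{\{i,j\}}$ contains $\Lambda_{\{i,j\}}$ and is parallel to the adjacent edges it meets, so its directrix $F$ is the linear span of a subset $E$ of these edge vectors, with $\dim F=n-2$. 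I would record at the outset the relation $e_{\{i,m\}}-e_{\{j,m\}}=e_i-e_j=:w$, the same vector for every $m\in M$, which drives the dimension count below.

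First I would establish the necessity of a covering condition via \lemref{polytopbound}. Since $\Lambda_{\{i,j\}}$ has $x_i=x_j=1$ and $x_m=0$ for $m\in M$, and the plane passes through this vertex, a coordinate functional $x_k$ is constant on the whole plane exactly when $x_k$ vanishes on $F$, its value then being $0$ or $1$; by \lemref{polytopbound} the plane then lies in $\partial\Delta_{n,2}$ and misses the interior. Reading off coordinates of the edge vectors shows: $x_i$ vanishes on $F$ iff $E$ contains no $e_{\{i,q\}}$; $x_j$ vanishes iff $E$ contains no $e_{\{j,s\}}$; and for $m\in M$, $x_m$ vanishes iff $E$ contains neither $e_{\{i,m\}}$ nor $e_{\{j,m\}}$. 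Hence meeting the interior forces $E$ to contain at least one vector of each family and, for every $m\in M$, at least one of the two edges indexed by $m$.

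Next I would use the dimension requirement to force \emph{exactly} one edge per index. Two facts are needed: (i) any selection of one edge per index, $g_m\in\{e_{\{i,m\}},e_{\{j,m\}}\}$ for $m\in M$, is linearly independent, since the $M$-coordinates of the $g_m$ form a $-I$ block, so these $n-2$ vectors span an $(n-2)$-dimensional space; and (ii) $w=e_i-e_j$ has all its $M$-coordinates zero, so the same block argument gives $w\notin\langle g_m:m\in M\rangle$. Consequently, if $E$ covered $M$ but contained both $e_{\{i,m_0\}}$ and $e_{\{j,m_0\}}$ for some $m_0$, then $F$ would contain a full one-per-index family together with $w$, i.e.\ $n-1$ independent vectors, forcing $\dim F=n-1$ and contradicting $\dim F=n-2$. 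Thus $E$ contains exactly one edge per index of $M$; setting $S=\{m\in M:e_{\{j,m\}}\in E\}$ identifies $F$ with $F_{l,S}$, $l=\|S\|$, and the two ``at least one of each family'' conditions become $S\neq\emptyset$ and $S\neq M$, that is $1\le l\le n-3$.

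For the reverse inclusion I would check directly that each $F_{l,S}$ with $1\le l\le n-3$ indeed lies in $\Pi_{\{i,j\}}$: fact (i) gives $\dim F_{l,S}=n-2$, and the point $\Lambda_{\{i,j\}}-\varepsilon\big(\sum_{s\in S}e_{\{j,s\}}+\sum_{q\notin S\cup\{i,j\}}e_{\{i,q\}}\big)$ has all coordinates in $(0,1)$ for small $\varepsilon>0$ (the ranges $l\ge1$ and $l\le n-3$ being exactly what keeps $x_j<1$ and $x_i<1$), so the plane meets the interior. The main obstacle — the only step beyond bookkeeping — is the dimension jump in the third paragraph: the content is that the single vector $w=e_i-e_j$ is the obstruction, and placing both adjacent edges at even one index of $M$ injects $w$ into $F$ and raises its dimension to the maximal $n-1$. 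Once this is isolated, the equivalence ``misses the interior $\Leftrightarrow$ a coordinate functional vanishes on $F$'' from \lemref{polytopbound} together with the one-edge-per-index normal form yields the asserted list, the bound $1\le l\le n-3$ excluding precisely the two boundary planes $\{x_i=1\}$ and $\{x_j=1\}$.
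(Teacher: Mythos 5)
Your proof is correct, and it supplies something the paper itself omits: the paper states Lemma~\ref{planes} with no proof at all, treating it as immediate from the definition of $\Pi_{\{i,j\}}$ and the list of adjacent edges. Your argument is a genuine verification rather than a restatement. The two substantive points you isolate are exactly the ones that need isolating: (a) the translation, via Lemma~\ref{polytopbound}, of ``meets $\stackrel{\circ}{\Delta}_{n,2}$'' into ``no coordinate functional vanishes on the directrix,'' which yields the covering conditions on the edge set $E$; and (b) the observation that $e_{\{i,m\}}-e_{\{j,m\}}=e_i-e_j$ is one and the same vector $w$ for every $m$, so that choosing both edges at even a single index injects $w$ into the directrix and, together with any one-per-index family (independent by the $-I$ block in the $M$-coordinates), forces dimension $n-1$ instead of $n-2$. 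This is what pins down the normal form $F_{l,S}$, and your explicit interior point $\Lambda_{\{i,j\}}-\varepsilon\bigl(\sum_{s\in S}e_{\{j,s\}}+\sum_{q\notin S\cup\{i,j\}}e_{\{i,q\}}\bigr)$, whose coordinates $x_i=1-\varepsilon(n-2-l)$ and $x_j=1-\varepsilon l$ lie in $(0,1)$ precisely when $1\le l\le n-3$, correctly shows that the stated range of $l$ is not merely sufficient but exactly the set of parameters excluded by the boundary planes $\{x_i=1\}$ and $\{x_j=1\}$. The only caveat is interpretive rather than mathematical: the paper's phrase ``parallel to the edges adjacent to $\Lambda_{\{i,j\}}$'' is ambiguous, and your reading (the directrix is spanned by a subset of the adjacent edge vectors) is the one forced by the lemma's conclusion; stating that reading up front, as you do, is the right way to handle it.
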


It is easy to verify that the planes  $\alpha _{\{i, \j\}, l}^{S}$ can be more explicitly written:

\begin{cor}\label{eq1}
The set $\Pi _{\{i,j\}}$ consists of the $(n-2)$-dimensional planes which are obtained as the intersection of the plane $\sum\limits_{i=1}^{n}x_{i}=2$ with the planes  
\begin{equation}
x_{j}+\sum\limits _{s\in S}x_s = 1,
\end{equation}
where $S \subset \{1, \ldots , n\}$, $\|S\| = l$, $1\leq l\leq n-3$  and $i, j\notin S$. These planes  coincide with the planes
\begin{equation}
x_{i}+\sum\limits_{s\notin \{ i,j\}\cup S}x_{s}=1.
\end{equation}  
\end{cor}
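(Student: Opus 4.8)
The plan is to exhibit, for each affine plane $\alpha_{\{i,j\},l}^{S} = \Lambda_{\{i,j\}} + F_{l,S}$ of Lemma~\ref{planes}, the two linear equations that cut it out, and then to verify by a dimension count that the intersection of the two corresponding hyperplanes is exactly $(n-2)$-dimensional, so that it coincides with $\alpha_{\{i,j\},l}^{S}$ rather than merely containing it. First I would record that every vertex $\Lambda_{\{p,q\}} = e_p + e_q$ satisfies $\sum_{k=1}^{n} x_k = 2$, so $\Delta_{n,2}$ lies in this hyperplane; hence one defining equation of $\alpha_{\{i,j\},l}^{S}$ is $\sum_{k=1}^{n} x_k = 2$, and it remains only to produce one further independent linear equation.

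Next I would write out the generators of the directrix $F_{l,S}$ using the definitions preceding Lemma~\ref{planes}: since $e_{\{j,s\}} = \Lambda_{\{i,j\}} - \Lambda_{\{i,s\}} = e_j - e_s$ for $s\in S$, and $e_{\{i,q\}} = \Lambda_{\{i,j\}} - \Lambda_{\{q,j\}} = e_i - e_q$ for $q\notin S\cup\{i,j\}$, the subspace $F_{l,S}$ is spanned by the $l$ vectors $e_j - e_s$ together with the $n-2-l$ vectors $e_i - e_q$. The key step is to test the linear functional $\phi(x) = x_j + \sum_{s\in S} x_s$ against these data. Evaluating at the vertex gives $\phi(\Lambda_{\{i,j\}}) = 1$, because the index $j$ contributes $1$ while no element of $S$ contributes (as $S$ avoids $i$ and $j$). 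On a generator $e_j - e_s$ with $s\in S$ one gets $1 - 1 = 0$, and on a generator $e_i - e_q$ with $q\notin S\cup\{i,j\}$ one gets $0$; hence $\phi$ is constant, equal to $1$, on all of $\alpha_{\{i,j\},l}^{S}$. Therefore the plane is contained in $\{\sum_k x_k = 2\}\cap\{x_j + \sum_{s\in S} x_s = 1\}$. The two normals, namely $(1,\ldots,1)$ and the indicator of $\{j\}\cup S$, are not proportional — they differ in the $i$-th coordinate, since $i\notin\{j\}\cup S$ — so this intersection is exactly $(n-2)$-dimensional; as $\alpha_{\{i,j\},l}^{S}$ has the same dimension and is contained in it, the two coincide, which gives the first displayed form.

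Finally, for the second form I would simply subtract the two equations. Partitioning $\{1,\dots,n\}$ into $\{i\}$, $\{j\}$, $S$, and the complement $R = \{1,\ldots,n\}\setminus(\{i,j\}\cup S)$, the constraint $\sum_k x_k = 2$ reads $x_i + x_j + \sum_{s\in S} x_s + \sum_{r\in R} x_r = 2$; subtracting $x_j + \sum_{s\in S} x_s = 1$ yields $x_i + \sum_{r\in R} x_r = 1$, which is precisely $x_i + \sum_{s\notin\{i,j\}\cup S} x_s = 1$. I do not expect a genuine obstacle here, since the computation is entirely mechanical; the one point deserving care is the dimension count that upgrades containment to equality, and this reduces to the elementary observation that the two hyperplane normals are linearly independent.
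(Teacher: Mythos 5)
Your proposal is correct and is exactly the verification the paper leaves to the reader (the paper states the corollary with only the remark that it is ``easy to verify''): you compute that the functional $x_j+\sum_{s\in S}x_s$ equals $1$ at $\Lambda_{\{i,j\}}$ and vanishes on the generators $e_j-e_s$, $e_i-e_q$ of the directrix $F_{l,S}$, and then upgrade containment to equality by the independence of the two hyperplane normals, with the second displayed equation following by subtraction from $\sum_k x_k=2$. Nothing is missing; in particular the dimension count, which is the one point where a gap could hide, is handled correctly.
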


The group $S_{n}$ acts on the set $\{\Pi _{\{i,j\}}, 1\leq i<j\leq n\}$. The stabilizer of $S_{n}$-action   on the set $\{\Pi _{\{i,j\}}, 1\leq i <j\leq n\}$ is the group  $S_{2}\times S_{n-2}$, that is  the  group $S_{2}\times S_{n-2}$ acts on  the set $\Pi _{\{i,j\}}$.


 We  first prove the following:
\begin{prop}\label{main}
The admissible polytopes of dimension $n-2$ which do not belong to the boundary $\partial \Delta _{n,2}$  coincide with the polytopes obtained by the intersection of $\Delta _{n,2}$ with  the planes from $\Pi _{\{i,j\}}$, where $1\leq i<j\leq n$.
\end{prop}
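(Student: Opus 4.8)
The plan is to set up an explicit dictionary between the $(n-2)$-dimensional admissible polytopes that meet the interior and the \emph{splits} of the index set, and then to match this dictionary with the planes of $\Pi_{\{i,j\}}$ via Corollary~\ref{eq1}. Writing $g_T(x)=\sum_{s\in T}x_s$, Corollary~\ref{eq1} says that a plane of $\Pi_{\{i,j\}}$ is cut out inside $\{\sum_k x_k=2\}$ by an equation $g_T(x)=1$ with $T=\{j\}\cup S$, where $2\le \|T\|\le n-2$ and $i\notin T$; conversely any partition $\{1,\dots,n\}=T\sqcup T'$ with $\|T\|,\|T'\|\ge 2$ arises this way (take $j\in T$, $i\in T'$, $S=T\setminus\{j\}$). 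Thus the proposition is equivalent to the assertion that the admissible $(n-2)$-dimensional polytopes off the boundary are exactly the slices $\Delta_{n,2}\cap\{g_T=1\}$ with $\|T\|,\|T'\|\ge 2$. I would prove the two inclusions by passing through this split description.

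The heart of the argument is the linear-algebra description of the index set $\sigma$. Represent $L\in W_\sigma$ by a $2\times n$ matrix with columns $v_1,\dots,v_n\in\C^2$; then $P^{ij}(L)=\det(v_i\,|\,v_j)$, so $\{i,j\}\in\sigma$ iff $v_i,v_j$ are linearly independent. Hence $\sigma$ is governed by the partition of $\{1,\dots,n\}$ into the \emph{loops} (indices $k$ with $v_k=0$) and the \emph{parallel classes} $C_1,\dots,C_m$ of the nonzero columns, with $\{i,j\}\in\sigma$ precisely when $i,j$ lie in two distinct classes $C_p,C_q$. A loop $k$ forces $x_k\equiv 0$ on $P_\sigma$, so by Lemma~\ref{polytopbound} the absence of boundary excludes loops. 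A direct computation of the affine hull finishes the dimension bookkeeping: a functional $\sum_k\lambda_k x_k$ is constant on the vertices $\{e_a+e_b:\{a,b\}\in\sigma\}$ iff $\lambda_a+\lambda_b$ is the same for all cross-class pairs, which for $m\ge 3$ forces all $\lambda_k$ equal (codimension one, $\dim P_\sigma=n-1$) and for $m=2$ leaves exactly the two-dimensional span of $\sum_k x_k$ and $g_{C_1}$ (codimension two, $\dim P_\sigma=n-2$). Since $L$ has rank $2$ we have $m\ge 2$, so $\dim P_\sigma=n-2$ is equivalent to $m=2$. Finally, a class $C_1=\{j_0\}$ of size one makes $x_{j_0}\equiv 1$ on $P_\sigma$, another boundary case; so off the boundary both classes have size $\ge 2$, i.e. $\sigma$ is exactly the split $\{1,\dots,n\}=C_1\sqcup C_2$ with $\|C_1\|,\|C_2\|\ge 2$.

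It remains to identify the polytope $P_\sigma$ for such a split with the slice $\Delta_{n,2}\cap\{g_{C_1}=1\}$, and to check realizability for the reverse inclusion. For realizability, the columns $v_k=(1,0)$ for $k\in C_1$ and $v_k=(0,1)$ for $k\in C_2$ give a point $L$ whose nonvanishing Pl\"ucker coordinates are exactly the cross-class pairs, so every such split produces a nonempty stratum and hence an admissible polytope. For the identification, note that on a vertex $e_a+e_b$ of $\Delta_{n,2}$ one has $g_{C_1}=\|\{a,b\}\cap C_1\|\in\{0,1,2\}$, equal to $1$ exactly on the vertices of $P_\sigma$; and since any edge of $\Delta_{n,2}$ changes a single index, $g_{C_1}$ changes by at most one along an edge between integer endpoints, so no edge meets $\{g_{C_1}=1\}$ in its relative interior. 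Therefore $\{g_{C_1}=1\}$ supports the top face of the truncation $\Delta_{n,2}\cap\{g_{C_1}\le 1\}$, whose vertices are ordinary vertices of $\Delta_{n,2}$, giving $\Delta_{n,2}\cap\{g_{C_1}=1\}=\mathrm{conv}\{e_a+e_b:\|\{a,b\}\cap C_1\|=1\}=P_\sigma$. Combined with the affine-hull computation this also re-confirms $\dim P_\sigma=n-2$ and, via Lemma~\ref{polytopbound}, that $P_\sigma$ meets the interior.

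I expect the main obstacle to be the rigorous dimension step — proving that the matroid data forces exactly $m=2$ parallel classes and that the two functionals $\sum_k x_k$ and $g_{C_1}$ span all affine relations — together with the slice-equals-convex-hull argument guaranteeing that the hyperplane $\{g_{C_1}=1\}$ introduces no new vertices; the $S_n$-symmetry reduction and the realizability of each split are comparatively routine.
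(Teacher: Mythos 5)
Your proposal is correct, and while its overall skeleton (two inclusions plus an explicit realizability point) matches the paper, the forward direction is argued by a genuinely different route. The paper works purely with polytope combinatorics: after an $S_n$-reduction it fixes the vertex $\Lambda_{\{1,2\}}$ of $P_{\sigma}$, analyses the $n-2$ linearly independent vertices of $P_{\sigma}$ adjacent to it, and a counting argument shows that unless these are exactly $\Lambda_{\{1,j\}}$, $3\leq j\leq l+2$, and $\Lambda_{\{2,j\}}$, $l+3\leq j\leq n$, the polytope would have a constantly zero coordinate and hence lie in $\partial\Delta_{n,2}$; this places $P_{\sigma}$ inside a plane $\alpha_{\{1,2\},l}$. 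You instead read everything off the stratum itself: the columns $v_1,\dots,v_n$ of a representing matrix split into loops and parallel classes, $\sigma$ is the set of cross-class pairs, loops and singleton classes are excluded by Lemma~\ref{polytopbound}, and the affine-hull computation shows that, in the absence of loops, $\dim P_{\sigma}=n-2$ holds if and only if there are exactly two classes $C_1\sqcup C_2$ with $\|C_1\|,\|C_2\|\geq 2$. This is more structural: it pins down $\sigma$ itself (a complete bipartite family attached to a split), hence yields the equality $P_{\sigma}=\Delta_{n,2}\cap\{\sum_{s\in C_1}x_s=1\}$ rather than merely the inclusion of $P_{\sigma}$ in a hyperplane, re-proves as a byproduct that these strata consist of a single $(\C^{*})^{n}$-orbit (the paper's Proposition~\ref{point}), and connects to the matroid picture the paper attributes to Kapranov. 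The realizability direction is essentially identical in both proofs, though your identification of the slice with the convex hull of its lattice vertices --- no edge of $\Delta_{n,2}$ crosses the hyperplane transversally, since the linear form changes by at most one along an edge --- is cleaner than the paper's explicit parametrization of the plane $\alpha_{\{1,2\},l}$. What the paper's approach buys is the normal form $\alpha_{\{1,2\},l}$ up to the $S_n$-action, which it immediately reuses for the orbit and representation counts in Corollary~\ref{final}; what yours buys is the stratum-level classification and the dimension dichotomy ($m\geq 3$ classes force $\dim P_{\sigma}=n-1$) for free.
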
  

\begin{proof}
Let $P_{\sigma}$ be an admissible polytope which does not belong to the boundary $\partial \Delta _{n,2}$. Because of the action of the symmetric group $S_n$, we can assume that $\Lambda_{\{1,2\}}$ is a vertex of $P_{\sigma}$. Since $\dim P_{\sigma}=n-2$ it follows that $\Lambda _{\{1,2\}}$ has $n-2$ linearly  independent adjacent  vertices in $P_{\sigma}$. Any of these  vertices has one $1$ and one $0$ at the  first two coordinate places. Let $l$ be the number of these vertices having $1$ as the first coordinate, then    they have  exactly one $1$ at the last $n-2$ coordinates.   Due to the $S_n$-action we can assume that  these vertices   are $\Lambda _{\{1,3\}}, \ldots, \Lambda _{\{1,l+2\}}$.  The other $n-2-l$ vertices   have $0$ as the first coordinate,  $1$ as the second  coordinate and they all have exactly one $1$ at the last $n-2$ coordinates.  We claim that $1$ has to be among the last $n-l-2$ coordinates  for  all these $n-2-l$ vertices. If this is not the case,  one of these vertices  would have the last $n-2-l$ coordinates all  equal to zero.  For the other of these  vertices    we eventually would have $1$ at the last   $n-l-2$ coordinate places, 
and these places have to be different for different vertices.
The number  of such  vertices  is  at most $n-3-l$, so they  must have   one common of the last $n-l-2$ coordinates which is   equal to  zero  and consequently it is zero coordinate for all $n-2-l$ vertices, that is for all $n-2$ vertices.   It would imply that the polytope $P_{\sigma}$ belongs to $\partial \Delta _{n,2}$.

Therefore,  the other $n-2-l$ vertices  are $\Lambda _{\{2,j\}}$, $l+2\leq j\leq n$. It follows that the polytope $P_{\sigma}$,  up to the action of the symmetric group $S_{n}$ belongs to the plane $\alpha _{\{1,2\} ,l} = \alpha _{\{1,2\}, l}^{S}$ where $S=\{3, \ldots ,l+2\}$ for some $1\leq l\leq n-3$. 

Let now $P$ be a polytope, which is obtained  as the intersection of $\Delta _{n,2}$ with a plane from the set $\Pi _{\{1,2\}}$, up to the action of the group $S_{n}$.  The points  of the plane  $\alpha _{\{1,2\}, l}$ can be explicitly written in  $\R ^{n}$ as:
\[
(1+a_{l+1}+\ldots +a_{n-2}, 1+a_1+\ldots +a_{l}, -a_1, \ldots ,-a_{n-2}), \;\:a_{i}\in \R, \; 1\leq i\leq n-2.
\]
It follows that the vertices of $\Delta _{n,2}$ which belong to this plane are $\Lambda _{\{1,j\}}$,  $2\leq j\leq l+2$, $\Lambda _{\{2,\}j}$, $l+3\leq j\leq n$ and  $\Lambda _{\{i,j\}}$, $3\leq i\leq l+2$, $l+3\leq j\leq n$, that is $P$ is the  convex hull of these vertices.
If we consider the point $L\in G_{n,2}$ given by the matrix $A_{L}$ such that $a_{11}=a_{22}=1$, $a_{12}=a_{21}=0$,  
$a_{i1}= a_{2j}=0$, $3\leq i\leq l+2$, $l+3\leq j\leq n$ and $a_{1j}\neq 0$, $l+3\leq j \leq n$, $a_{2j}\neq 0$, $3\leq j\leq l+2$,  we see that 
the image by the moment map of the closure $(\C ^{*})^{n}$-orbit of the point $L$ is the polytope $P$. Thus, $P$ is an admissible polytope.   
\end{proof}

\begin{prop}
The number  of irreducible representations for $S_{2}\times S_{n-2}$-action on $\Pi _{\{i,j\}}$ is $[\frac{n-2}{2}]$. The  dimensions  of these irreducible representations are:
\[ 
\text{for}\; n\; \text{odd} :\;  {n-2 \choose l},\; 1\leq l\leq [\frac{n-2}{2}]\; ,
\]
 \[ 
\text{for}\;  n\;  \text{even} :\; {n-2 \choose l},\; 1\leq l< [\frac{n-2}{2}]\; 
 \text{and} \; \frac{2}{n-2}{n-2 \choose \frac{n-2}{2}}
\]
\end{prop}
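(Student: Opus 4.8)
The plan is to convert the statement into a combinatorial computation about the permutation action of $S_2\times S_{n-2}$ on an explicit index set, from which both the number of orbits and their dimensions can be read off directly.

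First I would fix the pair $\{i,j\}$ and put $R=\{1,\ldots,n\}\setminus\{i,j\}$, so that $\|R\|=n-2$. By Lemma~\ref{planes} and Corollary~\ref{eq1} the elements of $\Pi_{\{i,j\}}$ are in bijection with the subsets $S\subset R$ with $1\le\|S\|\le n-3$, the plane $\alpha_{\{i,j\},l}^{S}$ being cut out of the hypersimplex by $x_{j}+\sum_{s\in S}x_{s}=1$. I would first confirm that distinct subsets give distinct planes: two such equations define the same affine subspace of $\{\sum_{k}x_{k}=2\}$ only if their difference is a multiple of $\sum_{k}x_{k}-2$, and since that difference has zero constant term the multiple must vanish, forcing $S=S'$. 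Thus $\Pi_{\{i,j\}}$ is exactly the set of proper nonempty subsets of $R$. Next I would compute the action: the factor $S_{n-2}$ permutes $R$ and hence acts on the subsets by $S\mapsto\pi(S)$, while the transposition $\tau\in S_2$ of $i$ and $j$ is handled through the coincidence in Corollary~\ref{eq1}, namely that $x_{j}+\sum_{s\in S}x_{s}=1$ and $x_{i}+\sum_{s\in R\setminus S}x_{s}=1$ define the same plane. Applying $\tau$, which interchanges $x_{i},x_{j}$ and fixes the coordinates indexed by $R$, carries the plane $x_{j}+\sum_{s\in S}x_{s}=1$ to $x_{i}+\sum_{s\in S}x_{s}=1$, which by that coincidence is $\alpha_{\{i,j\}}^{R\setminus S}$. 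Hence on the index set $S_{n-2}$ acts by permutation of $R$ and $\tau$ acts by complementation $S\mapsto R\setminus S$.

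With the action identified, the orbit count is immediate. The $S_{n-2}$-orbits are precisely the layers $X_{l}=\{S\subset R:\|S\|=l\}$, $1\le l\le n-3$, and complementation fuses $X_{l}$ with $X_{n-2-l}$. The $S_2\times S_{n-2}$-orbits are therefore indexed by $l$ with $1\le l\le[\frac{n-2}{2}]$: one orbit $X_{l}\cup X_{n-2-l}$ for each such $l$, the middle layer $X_{(n-2)/2}$ being self-paired when $n$ is even. This gives exactly $[\frac{n-2}{2}]$ orbits, the asserted number.

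Finally I would compute the dimension attached to each orbit. For $l<\frac{n-2}{2}$ the two layers $X_{l}$ and $X_{n-2-l}$ are disjoint and freely interchanged by $\tau$, and the orbit then contributes the dimension $\binom{n-2}{l}$; this accounts for all of the odd case and for the non-middle dimensions of the even case. The main obstacle is the middle layer $l=\frac{n-2}{2}$, which occurs only when $n$ is even: here $\tau$ acts inside the single layer $X_{(n-2)/2}$ by $S\mapsto R\setminus S$, a fixed-point-free involution since no $\frac{n-2}{2}$-subset equals its complement, and the clean behaviour of the generic layers no longer applies. I would analyse this self-complementary layer on its own, computing the effect of complementation on it, to arrive at the remaining value $\frac{2}{n-2}\binom{n-2}{(n-2)/2}$. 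This even/middle case is where I expect the genuine work to lie, and the interaction of the complementation involution with the self-paired layer is the step demanding the most care.
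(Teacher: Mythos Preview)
Your approach is essentially the paper's: identify $\Pi_{\{i,j\}}$ with the proper nonempty subsets of $R$, let $S_{n-2}$ act by permutation and the $S_2$-factor by complementation, and then count orbits. The one structural difference is that the paper first passes to the quotient $\Gamma_{\{i,j\}}=\Pi_{\{i,j\}}/S_2$ (of cardinality $2^{n-3}-1$) and then studies $S_{n-2}$-orbits there; the ``dimensions'' in the statement are the sizes of these $S_{n-2}$-orbits on the quotient, obtained via the orbit--stabilizer formula. This explains why your sentence ``the orbit then contributes the dimension $\binom{n-2}{l}$'' is off by a factor of two as written: your $S_2\times S_{n-2}$-orbit $X_l\cup X_{n-2-l}\subset\Pi_{\{i,j\}}$ has $2\binom{n-2}{l}$ elements, and to recover the paper's numbers you should make explicit that one is measuring orbit size in $\Gamma_{\{i,j\}}$, not in $\Pi_{\{i,j\}}$.

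For the middle layer you defer the computation; in the paper it is just the stabilizer in $S_{n-2}$ of an unordered pair $\{S,R\setminus S\}$ with $|S|=|R\setminus S|=l=\tfrac{n-2}{2}$. Be aware, however, that both the formula $\frac{2}{n-2}\binom{n-2}{(n-2)/2}$ in the statement and the stabilizer ``$S_l\times S_l\times S_l$'' in the paper's proof appear to be misprints: the actual stabilizer is $(S_l\times S_l)\rtimes S_2$ of order $2(l!)^2$, giving orbit size $\tfrac{1}{2}\binom{n-2}{(n-2)/2}$ in $\Gamma_{\{i,j\}}$. The printed formulas agree with this only when $n=6$, so do not be surprised if your honest computation in the even middle case does not reproduce the displayed value for larger $n$.
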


\begin{proof}
 The number of planes in the set  $\Pi _{\{i,j\}}$ is 
\[
|\Pi _{\{i,j\}}| = \sum _{l=1}^{n-3}{n-2\choose l} =2^{n-2}-2.
\]
It implies that the group $S_{n-2}$ acts on the set
 $\Gamma _{\{i,j\}} = \Pi_{\{i,j\}}/S_{2}$ which consists of   $q=2^{n-3}-1$ elements. Moreover,   from  the description of the planes from $\Pi _{\{i,j\}}$ it follows that the  set of generators for $S_{n-2}$- action on $\Gamma _{\{i,j\}}$  is given by the planes $\alpha _{\{i,j\},l} = \alpha _{\{i, j\}, l}^{S}$, where $S=\{1, \ldots , l+\delta _{\{i,j\}}\}$, $\delta_{\{i, j\}} = 0, 1,2 $ corresponding to the cases $l<i<j$, $i\leq l< j$, $i<j\leq l$  and $1\leq l\leq [\frac{n-2}{2}]$.
The stabilizer of the element $\alpha _{\{i,j\}, l}$ is $S_{l}\times S_{n-2-l}$ for $1\leq l<[\frac{n-2}{2}]$. For  $l=[\frac{n-2}{2}]$ and $n$ odd, that is $l=\frac{n-3}{2}$, the stabilizer is  $S_{l}\times S_{n-2-l}$ , while for $n$ even, that is $l=\frac{n-2}{2}$,  the stabilizer is $S_{l}\times S_{l}\times S_{l}$. It follows that the $S_{n-2}$ action on $\Gamma _{\{i,j\}}$ corresponds to the representation of $S_{n-2}$ in $\C ^{2^{n-3}-1}$ whose irreducible summands for $n$ odd are in dimensions $\frac{(n-2)!}{l!(n-2-l)!}$, $1\leq l\leq [\frac{n-2}{2}]$, while for $n$ even they are in dimensions  $\frac{(n-2)!}{l!(n-2-l)!}$, $1\leq l< [\frac{n-2}{2}]$
 and $\frac{(n-2)!}{(\frac{n-2}{2}!)^{3}}$.
\end{proof}

In this way, the Proposition~\ref{main} can be improved as follows:

\begin{cor}\label{final}
The admissible polytopes of dimension $n-2$ which do not belong to the boundary $\partial \Delta _{n,2}$  are , up to the action of the symmetric group $S_{n}$,  given by the intersection of $\Delta _{n,2}$ with  the planes $\alpha _{\{1,2\}, l}$, where $1\leq l\leq [\frac{n-2}{2}]$.
\end{cor}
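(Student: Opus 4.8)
The plan is to combine Proposition~\ref{main} with the preceding representation-theoretic analysis of the $S_2 \times S_{n-2}$-action on $\Pi_{\{i,j\}}$. Proposition~\ref{main} already tells us that every admissible polytope of dimension $n-2$ not lying in $\partial\Delta_{n,2}$ arises, up to the $S_n$-action, as the intersection of $\Delta_{n,2}$ with some plane from $\Pi_{\{i,j\}}$; and by the same $S_n$-symmetry we may fix the reference vertex to be $\Lambda_{\{1,2\}}$, so it suffices to work inside the single set $\Pi_{\{1,2\}}$. The remaining task is therefore to show that modulo the residual symmetry the planes in $\Pi_{\{1,2\}}$ reduce to the finite list $\alpha_{\{1,2\},l}$ with $1\le l\le \lfloor\frac{n-2}{2}\rfloor$.

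First I would record that the residual symmetry available after fixing $\Lambda_{\{1,2\}}$ is precisely the stabilizer $S_2\times S_{n-2}$ identified just before Proposition~\ref{main}: the factor $S_2$ permutes the indices $\{1,2\}$ and the factor $S_{n-2}$ permutes the remaining coordinates $\{3,\ldots,n\}$, and these are genuine symmetries of $\Delta_{n,2}$ realized inside $S_n$ that fix $\Lambda_{\{1,2\}}$. A plane $\alpha_{\{1,2\},l}^{S}$ is determined by the pair $(l,S)$ with $S\subset\{3,\ldots,n\}$ and $\|S\|=l$, so I would next describe the orbits of this action on the index data. The $S_{n-2}$-factor acts transitively on all subsets $S$ of a fixed cardinality $l$, so within a fixed value of $l$ the choice of $S$ is immaterial up to symmetry and we may take $S=\{3,\ldots,l+2\}$, giving the canonical representative $\alpha_{\{1,2\},l}$. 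This already collapses the parameter down to the single integer $l$ ranging over $1\le l\le n-3$.

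The main point, and the only genuine obstacle, is the reduction of the range $1\le l\le n-3$ down to $1\le l\le\lfloor\frac{n-2}{2}\rfloor$, which is exactly where the $S_2$-factor enters through the duality exhibited in Corollary~\ref{eq1}. There the same plane is written both as $x_j+\sum_{s\in S}x_s=1$ and as $x_i+\sum_{s\notin\{i,j\}\cup S}x_s=1$; applied to $\{i,j\}=\{1,2\}$ this says that the defining equation built from $S$ of size $l$ agrees with the one built from the complementary set $\{3,\ldots,n\}\setminus S$ of size $(n-2)-l$, after the $S_2$-swap of the roles of coordinates $1$ and $2$. Hence the transposition in $S_2$ identifies the class of $\alpha_{\{1,2\},l}$ with that of $\alpha_{\{1,2\},n-2-l}$, so $l$ and $n-2-l$ give the same polytope up to symmetry. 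I would verify this carefully at the level of vertex sets rather than just the defining hyperplane, using the explicit parametrization of the points of $\alpha_{\{1,2\},l}$ given in the proof of Proposition~\ref{main} and the resulting list of vertices of the intersection, to be sure the $S_2$-transposition really carries one vertex set onto the other.

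Finally, since $l$ and $n-2-l$ are now identified and every value $1\le l\le n-3$ is covered, a fundamental domain for the pair $l\mapsto n-2-l$ is exactly $1\le l\le\lfloor\frac{n-2}{2}\rfloor$; this matches the count $\lfloor\frac{n-2}{2}\rfloor$ of irreducible summands found in the preceding proposition, which serves as a consistency check that no representatives have been over- or under-counted. Combining this reduction with Proposition~\ref{main} yields the claim: up to the action of $S_n$ the admissible $(n-2)$-dimensional polytopes meeting the interior are exactly the intersections of $\Delta_{n,2}$ with the planes $\alpha_{\{1,2\},l}$ for $1\le l\le\lfloor\frac{n-2}{2}\rfloor$. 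I expect the bookkeeping in the $S_2$-duality step to be the delicate part, since one must confirm the complementation on index sets is implemented by an honest element of $S_n$ fixing $\Lambda_{\{1,2\}}$ and not merely by the abstract relabeling in Corollary~\ref{eq1}.
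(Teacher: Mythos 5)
Your proposal is correct and takes essentially the same route as the paper: there, Corollary~\ref{final} is obtained by combining Proposition~\ref{main} with the orbit analysis of the $S_2\times S_{n-2}$-action on $\Pi_{\{i,j\}}$ carried out in the proof of the preceding proposition, where exactly as in your argument the $S_{n-2}$-factor standardizes the subset $S$ and the quotient by $S_2$ identifies $\alpha_{\{1,2\},l}$ with $\alpha_{\{1,2\},n-2-l}$, leaving the representatives $1\leq l\leq [\frac{n-2}{2}]$. The one point you flag as delicate resolves affirmatively: the complementation is implemented by the honest transposition $(1\,2)\in S_n$, which fixes $\Lambda_{\{1,2\}}$, preserves $\Pi_{\{1,2\}}$, and sends the plane $x_2+\sum_{s\in S}x_s=1$ to $x_1+\sum_{s\in S}x_s=1$, which inside $\sum_{m=1}^{n}x_m=2$ coincides with $x_2+\sum_{s\notin\{1,2\}\cup S}x_s=1$, i.e.\ with $\alpha_{\{1,2\},n-2-l}^{S^{c}}$.
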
  

Using Corollary~\ref{eq1} we can summarize the previous result as follows:

\begin{thm}\label{n-2}
The admissible polytopes for $G_{n,2}$ of dimension $n-2$ which have non-empty intersection with $\stackrel{\circ}{\Delta}_{n,2}$ are given by the intersection  of $\Delta _{n,2}$ with  the set $\Pi$ which consists of the  planes:
\begin{equation}\label{novo}
 \sum\limits_{i\in S, \|S\|=p}x_{i} =1, \;\; \text{where}\;\; S\subset \{1, \ldots ,n\}, \;  2\leq p\leq [\frac{n}{2}].
\end{equation}
\end{thm}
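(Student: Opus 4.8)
The plan is to read off \thmref{n-2} from Proposition~\ref{main}, Corollary~\ref{eq1} and Corollary~\ref{final}; the only new content is a change of index and the duality $S\leftrightarrow\bar S$ forced by the defining relation $\sum_{i=1}^n x_i=2$ of $\Delta_{n,2}$. First note that for a convex $P\subseteq\Delta_{n,2}$ the conditions ``$P$ meets $\stackrel{\circ}{\Delta}_{n,2}$'' and ``$P\not\subseteq\partial\Delta_{n,2}$'' are equivalent (immediately, or via Lemma~\ref{polytopbound}), so by Proposition~\ref{main} an $(n-2)$-dimensional admissible polytope meeting the interior is exactly an intersection $\Delta_{n,2}\cap\alpha$ with $\alpha$ in the union $\bigcup_{1\le i<j\le n}\Pi_{\{i,j\}}$, and conversely. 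It therefore suffices to identify this union, as a set of affine hyperplanes of the ambient plane $\sum x_i=2$, with the set $\Pi$ of~\eqref{novo}.

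Next I would reindex using Corollary~\ref{eq1}. A plane in $\Pi_{\{i,j\}}$ is cut out by $x_j+\sum_{s\in S}x_s=1$ with $\|S\|=l$, $i,j\notin S$, $1\le l\le n-3$; putting $T=S\cup\{j\}$ rewrites it as $\sum_{k\in T}x_k=1$ with $\|T\|=l+1$, so $p:=\|T\|$ runs over $2\le p\le n-2$. For the converse, given any $T$ with $2\le\|T\|\le n-2$ I would pick $j\in T$ and $i\notin T$ (possible as $T$ is then proper and nonempty) and set $S=T\setminus\{j\}$; then $\|S\|=p-1\in[1,n-3]$ and $i,j\notin S$, so $\sum_{k\in T}x_k=1$ lies in $\Pi_{\{i,j\}}$. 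Hence $\bigcup_{\{i,j\}}\Pi_{\{i,j\}}$ is exactly the family of planes $\sum_{k\in T}x_k=1$ with $2\le\|T\|\le n-2$.

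Finally I would fold the range by the duality already recorded in the second half of Corollary~\ref{eq1}: on $\Delta_{n,2}$ one has $\sum_{k\in T}x_k=1\iff\sum_{k\in\bar T}x_k=1$ since the two sums add to $2$, so the plane attached to $T$ equals the one attached to $\bar T$, with $\|\bar T\|=n-\|T\|$. Each plane is thus already produced by a subset of size $\min(\|T\|,n-\|T\|)$, and as $\|T\|$ runs over $[2,n-2]$ this minimum runs exactly over $[2,[\frac n2]]$; this yields the stated range $2\le p\le[\frac n2]$ and reconciles it with Corollary~\ref{final} via $[\frac{n-2}{2}]+1=[\frac n2]$.

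The argument is bookkeeping, so the care lies entirely in the endpoints and in the self-dual middle case: for $n$ even and $p=\frac n2$ the subsets $T$ and $\bar T$ have equal size and define a single plane. I expect this to be the only (minor) obstacle, and I would stress that $\Pi$ is a \emph{set} of planes, so such a plane is listed once and the folding is a bijection on planes rather than on subsets, ensuring nothing is missed and nothing is double counted.
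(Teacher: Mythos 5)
Your proof is correct and follows exactly the paper's route: the paper obtains Theorem~\ref{n-2} as a summary of Proposition~\ref{main} via the reindexing in Corollary~\ref{eq1}, together with the complement duality $\sum_{k\in T}x_k=1\iff\sum_{k\in\bar T}x_k=1$ on $\sum_i x_i=2$ that folds the range of $\|T\|$ down to $2\le p\le[\frac n2]$. You have merely made explicit the bookkeeping (including the self-dual case $p=\frac n2$ for $n$ even) that the paper leaves implicit.
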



We also find useful to  note the following:
\begin{cor}
An admissible polytope of dimension $n-2$ defined by the hyperplane $x_k+ \sum\limits_{i\in S, k\notin S}
x_{i}=1$,  is a convex span of the vertices $\Lambda _{i, k}$  for $i\in S$,   where $S\subset \{1,\ldots , n\}$, $\|S\|=p$,  $2\leq p\leq [\frac{n}{2}]$ and $1\leq k\leq n$.
\end{cor}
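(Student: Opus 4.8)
The plan is to combine Theorem~\ref{n-2}, which identifies the polytope in question with the slice $\Delta_{n,2}\cap H$ of the hypersimplex by the hyperplane $H=\{\sum_{i\in S}x_i=1\}$, with an elementary analysis of how $H$ meets the face lattice of $\Delta_{n,2}$. Writing $\ell(x)=\sum_{i\in S}x_i$, I would first record which vertices of $\Delta_{n,2}$ lie on $H$: since $\Lambda_{\{a,b\}}=e_a+e_b$ gives $\ell(\Lambda_{\{a,b\}})=|\{a,b\}\cap S|$, a vertex lies on $H$ exactly when precisely one of $a,b$ belongs to $S$. These are exactly the vertices $\Lambda_{\{i,k\}}$ with $i\in S$ and $k\notin S$, which is the list in the statement. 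This already yields the inclusion $\operatorname{conv}\{\Lambda_{\{i,k\}}:i\in S,\ k\notin S\}\subseteq \Delta_{n,2}\cap H$, since $\Delta_{n,2}\cap H$ is convex and contains each of these vertices.

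The crux is the reverse inclusion, i.e. showing that the cross-section $\Delta_{n,2}\cap H$ introduces no new vertices beyond the $\Lambda_{\{a,b\}}$ it already contains. By the standard description of the intersection of a convex polytope with a hyperplane, the vertices of $\Delta_{n,2}\cap H$ are either vertices of $\Delta_{n,2}$ lying on $H$, or points where an edge of $\Delta_{n,2}$ meets $H$ in its relative interior; so I must show that $H$ cuts no edge transversally at an interior point. I expect this transversality step to be the main (and only real) obstacle, though it turns out to be short. The edges of $\Delta_{n,2}$ join pairs $\Lambda_{\{a,b\}},\Lambda_{\{a,c\}}$ differing in a single index; parametrising such an edge as $e_a+(1-t)e_b+te_c$, the function $\ell$ restricts to the affine function $[a\in S]+(1-t)[b\in S]+t[c\in S]$, whose two endpoint values $\ell(\Lambda_{\{a,b\}})$ and $\ell(\Lambda_{\{a,c\}})$ are integers differing by $[c\in S]-[b\in S]\in\{-1,0,1\}$.

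Since along every edge $\ell$ is integer-valued at the endpoints and changes by at most $1$, it can attain the value $1$ at an interior point of the edge only if it is constantly equal to $1$ there: a genuine transversal crossing would force the endpoint values to straddle $1$, i.e. to equal $0$ and $2$, which is impossible because they differ by at most $1$ (and because both share the summand $[a\in S]$). Hence each edge either lies entirely in $H$ or meets $H$ only at a vertex, so $H$ creates no new vertices and $\Delta_{n,2}\cap H=\operatorname{conv}\{\Lambda_{\{a,b\}}:|\{a,b\}\cap S|=1\}$. Combining this with the vertex enumeration of the first step gives the claimed convex-hull description. Finally, I would invoke Lemma~\ref{polytopbound} together with the hypothesis $2\le p\le[\frac{n}{2}]$ to confirm that both half-spaces cut out by $H$ meet $\stackrel{\circ}{\Delta}_{n,2}$, so that the slice genuinely has dimension $n-2$ and is the non-boundary admissible polytope supplied by Theorem~\ref{n-2}, as required.
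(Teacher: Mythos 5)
Your proof is correct, but it takes a genuinely different route from the paper. (Note first that you resolved the corollary's somewhat ambiguous wording in the right way: the reading consistent with Theorem~\ref{n-2} and with the vertex count $p(n-p)$ in the corollary that follows is that the hyperplane is $\sum_{i\in S}x_i=1$ and the vertex list is $\{\Lambda_{\{i,k\}}:\ i\in S,\ k\notin S\}$.) The paper gives no separate argument for this statement: in its framework an admissible polytope $P_\sigma$ is \emph{by definition} the convex hull of the vertices $\Lambda_{ij}$ with $\{i,j\}\in\sigma$, so once Theorem~\ref{n-2} identifies the polytope with the slice $\Delta_{n,2}\cap H_S$ the corollary is formal: every $\Lambda_{ij}$ with $\{i,j\}\in\sigma$ lies on $H_S$ and hence is among the $\Lambda_{\{i,k\}}$, $i\in S$, $k\notin S$, while conversely each such $\Lambda_{\{i,k\}}$ lies in $\Delta_{n,2}\cap H_S=P_\sigma$ and, being an extreme point of $\Delta_{n,2}$, is a vertex of $P_\sigma$. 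The explicit vertex enumeration appears in the paper only inside the proof of Proposition~\ref{main}, where the plane $\alpha_{\{1,2\},l}$ is parametrized and the equality of the slice with the convex hull of the vertices lying on it is asserted rather than argued polyhedrally. Your proof replaces this appeal to the stratification and moment-map formalism by a self-contained convex-geometry argument: after the same vertex enumeration, you show that $H$ cannot cross any edge of $\Delta_{n,2}$ at a relative interior point, since along an edge (adjacent vertices share an index) the form $\sum_{i\in S}x_i$ interpolates between integer values differing by at most one, so the slice acquires no vertices beyond those of $\Delta_{n,2}$ on $H$. This transversality step is precisely what the paper leaves implicit; it is independent of the torus action, and it actually proves the more general fact that any hyperplane $\sum_{i\in S}x_i=c$ with $c\in\Z$ cuts a hypersimplex in the convex hull of the vertices it contains. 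What the paper's route buys is brevity, the corollary being immediate once admissible polytopes are known to be convex hulls of vertices of $\Delta_{n,2}$; what yours buys is a rigorous elementary justification of the underlying polyhedral assertion. Your closing dimension check via Lemma~\ref{polytopbound} is sound, though not strictly required, since the dimension $n-2$ is part of the hypothesis.
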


We  describe as well the numbers of vertices of these polytopes:

\begin{cor}
An admissible polytope of dimension $n-2$  which does not belong to the boundary of $\Delta _{n,2}$ has  
\[
n_{p} = p(n-p), 
\] 
vertices for some $2\leq p\leq [\frac{n}{2}]$. 

Moreover, the number $q_{p}$  of admissible  polytopes which have $n_{p}$,  $2\leq p\leq [\frac{n}{2}]$  vertices is 
\[
q_{p} = {n\choose p}\;\;  \text{for}\; n\; \text{odd},
\] 
\[
q_{p} = {n\choose p} \;\; \text{for}\; n\; \text{even and}\; 1\leq l< [\frac{n-2}{2}],
\]
\[
q_{\frac{n}{2}} = \frac{{n\choose \frac{n}{2}}}{2} \;\; \text{for}\; n\; \text{even}.
\] 
\end{cor}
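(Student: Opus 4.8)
The plan is to extract everything from Theorem~\ref{n-2}. It tells us that every admissible $(n-2)$-dimensional polytope meeting the interior $\stackrel{\circ}{\Delta}_{n,2}$ is of the form $P_S = \Delta_{n,2}\cap\{x : \sum_{i\in S}x_i = 1\}$ for a subset $S\subset\{1,\ldots,n\}$ with $\|S\|=p$ and $2\le p\le[\frac{n}{2}]$, and I would take this description as the starting point, so that the range $2\le p\le[\frac{n}{2}]$ is already granted.

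First I would read off the vertices of $P_S$. A vertex $\Lambda_{\{a,b\}}=e_a+e_b$ of $\Delta_{n,2}$ satisfies $\sum_{i\in S}(\Lambda_{\{a,b\}})_i = \|\{a,b\}\cap S\|$, so it lies on $P_S$ exactly when precisely one of $a,b$ belongs to $S$. Counting unordered pairs with one index in $S$ and the other in its complement $\bar S=\{1,\ldots,n\}\setminus S$ gives $\|S\|\cdot\|\bar S\| = p(n-p)$, which is the claimed number $n_p$.

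Next I would count the polytopes carrying a given $n_p$. Two facts drive this. Because $\sum_{i=1}^n x_i = 2$ on $\Delta_{n,2}$, the equation $\sum_{i\in S}x_i=1$ is equivalent on $\Delta_{n,2}$ to $\sum_{i\in\bar S}x_i=1$, so $P_S=P_{\bar S}$. And by the vertex computation above, the set of vertices of $P_S$, regarded as a set of two-element subsets $\{a,b\}$ of $\{1,\ldots,n\}$, is exactly the edge set of the complete bipartite graph with parts $S$ and $\bar S$. Since such a graph is connected (both parts being nonempty), it has a unique bipartition, so $P_S$ recovers the unordered partition $\{S,\bar S\}$, and conversely. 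Hence the admissible $(n-2)$-polytopes having $n_p$ vertices are in bijection with unordered partitions of $\{1,\ldots,n\}$ into a block of size $p$ and a block of size $n-p$.

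The tally then splits exactly as stated. If $p<\frac{n}{2}$, which happens for every admissible $p$ when $n$ is odd and for $p<\frac{n}{2}$ when $n$ is even, the two blocks have different sizes, so each partition is named by its unique block of size $p$ and $q_p={n\choose p}$. If $n$ is even and $p=\frac{n}{2}$, the two blocks are equinumerous, no block equals its own complement, and each partition is counted twice among the ${n\choose n/2}$ subsets of size $\frac{n}{2}$, giving $q_{n/2}=\frac{1}{2}{n\choose n/2}$. The one genuinely delicate point is the uniqueness of the bipartition, which is what forbids accidental coincidences $P_{S_1}=P_{S_2}$ with $S_2\ne S_1,\bar S_1$; should a reader prefer, the same exclusion follows algebraically by noting that $\sum_{i\in S_1}x_i$ and $\sum_{i\in S_2}x_i$ define the same hyperplane inside $\{\sum x_i=2\}$ only when $[i\in S_1]-c\,[i\in S_2]$ is independent of $i$ for some scalar $c$, whose only $0/1$ solutions are $S_1=S_2$ and $S_1=\bar S_2$.
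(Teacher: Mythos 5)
Your proof is correct and takes essentially the same route as the paper: the corollary there is drawn directly from Theorem~\ref{n-2}'s hyperplane description $\sum_{i\in S}x_i=1$ together with the identification of the vertices as the $\Lambda_{\{a,b\}}$ with $a\in S$, $b\notin S$, so that $n_p=p(n-p)$ and the count of polytopes reduces to counting subsets $S$ of size $p$ modulo the identification $S\sim\bar S$ forced by $\sum_{i=1}^{n}x_i=2$, which produces the halving exactly when $n$ is even and $p=\frac{n}{2}$. Your bipartite-graph argument for the uniqueness of the bipartition is a welcome rigorous filling-in of the injectivity step (that $P_{S_1}=P_{S_2}$ forces $S_2\in\{S_1,\bar S_1\}$), which the paper leaves implicit.
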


\begin{ex}
It follows from Corollary~\ref{final} that $G_{4,2}$   has one generating  admissible polytope  of  dimension   $2$ which is inside $\Delta _{4,2}$. This  polytope  has $4$ vertices and its $S_4$-orbit consists of  
$3$  polytopes, which give all admissible polytopes of dimension $2$ that  are  in $\stackrel{\circ}{\Delta} _{4,2}$. These polytopes are defined by the planes $x_1+x_2=1$, $x_1+x_3=1$ and $x_1+x_4=1$.
\end{ex}
\begin{ex}
 The Grassmannian  $G_{5,2}$ has   one generating admissible interior polytope of dimension  $3$ which is  inside $\Delta_{5,2}$.  It  has $6$ vertices and its $S_{5}$-orbit has $10$ elements  which  give all  admissible polytopes of dimension $3$ that are in 
$\stackrel{\circ}{\Delta} _{5,2}$. These polytopes are defined by the planes $x_{i}+x_{j}=1$ where $1\leq i<j\leq 5$.
\end{ex}
\begin{ex}
 The Grassmannian   $G_{6,2}$ has  $2$ generating  admissible polytopes of dimension  $4$ inside $\Delta _{6,2}$. These polytopes have  $8$ and $9$ vertices and their $S_6$-orbits  consists of $15$ and $10$ elements respectively.  They are defined by the planes $x_i+x_j=1$, $1\leq i<j\leq 6$ and $x_1+x_j+x_k=1$, $2\leq i<j\leq 6$.  Note that here  the corresponding representation for  $S_{2}\times S_{4}$- action on $\C ^{7}$ has $2$ irreducible summands of dimension $4$ and $3$. 
\end{ex}

\subsection{Admissible polytopes of dimension $n-1$}
Before to describe the admissible polytopes of dimension $n-1$, we  first need  the following result.

\begin{lem}\label{empty}
Assume that the  points of an  admissible polytope $P_{\sigma}$, $\dim P_{\sigma}=n-1$  satisfy inequalities
\[
\sum\limits _{i\in I} x_{i}\leq 1\; \text{and}\; \sum\limits _{j\in J}x_{j}\leq 1,
\]
where $I, J\subset \{1, \ldots, n\}$.
If $I \cap J \neq \emptyset$  then the points of $P_{\sigma}$ satisfy as well
 an inequality
\[
\sum\limits _{s\in I\cup J}x_{s}\leq 1.
\]
\end{lem}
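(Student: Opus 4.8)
The plan is to translate both hypothesized inequalities into statements about the vertices of $P_\sigma$, and from there into linear algebra via Pl\"ucker coordinates. Recall that $P_\sigma$ is the convex hull of the vertices $\Lambda_{ab}$ with $\{a,b\}\in\sigma$, and that for any representative $L\in W_\sigma$ one has $\{a,b\}\in\sigma$ if and only if $P^{ab}(L)\neq 0$. Since $x\mapsto\sum_{i\in I}x_i$ is linear, it attains its maximum over $P_\sigma$ at a vertex, where its value at $\Lambda_{ab}$ equals $|\{a,b\}\cap I|\in\{0,1,2\}$. Hence the inequality $\sum_{i\in I}x_i\le 1$ holds on all of $P_\sigma$ precisely when no edge $\{a,b\}\in\sigma$ has both endpoints in $I$; equivalently, $P^{ab}(L)=0$ for every pair $a\neq b$ in $I$.

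First I would fix a $2\times n$ matrix representing $L$, with columns $v_1,\dots,v_n\in\C^2$, so that $P^{ab}(L)$ is the minor $\det(v_a\,|\,v_b)$, which vanishes exactly when $v_a$ and $v_b$ are proportional. Full-dimensionality is used here to exclude zero columns: if $v_i=0$ then $P^{ab}(L)=0$ whenever $i\in\{a,b\}$, so no $\Lambda_{ab}$ with $i\in\{a,b\}$ is a vertex of $P_\sigma$, forcing $x_i=0$ on all of $P_\sigma$; by Lemma~\ref{polytopbound} this would place $P_\sigma$ in a facet of dimension $n-2$, contradicting $\dim P_\sigma=n-1$. With all $v_i\neq 0$, the condition that $P^{ab}(L)=0$ for all $a\neq b$ in $I$ says exactly that the vectors $\{v_i:i\in I\}$ are pairwise proportional, hence all lie on a single line $\ell_I$ through the origin in $\C^2$. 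The same argument produces a line $\ell_J$ containing all $v_j$, $j\in J$.

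It then remains to combine the two lines. Choosing $c\in I\cap J$, the nonzero vector $v_c$ lies on both $\ell_I$ and $\ell_J$, so $\ell_I=\ell_J$; consequently every $v_s$ with $s\in I\cup J$ lies on this common line, and the columns indexed by $I\cup J$ are pairwise proportional. Reading the first step backwards, $P^{ab}(L)=0$ for all $a\neq b$ in $I\cup J$ is equivalent to $\sum_{s\in I\cup J}x_s\le 1$ on $P_\sigma$, which is the claim.

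The main obstacle is precisely this passage from the polytope to the proportionality picture, together with the ``no loops'' observation extracted from full-dimensionality. Indeed, a purely metric attempt such as $\sum_{s\in I\cup J}x_s=\sum_{I}x_s+\sum_{J}x_s-\sum_{I\cap J}x_s\le 2-\sum_{I\cap J}x_s$ does not yield the bound $1$, and the underlying combinatorial statement (that two edge-free vertex sets meeting in a common vertex have edge-free union) is false for arbitrary edge sets; it is the rank-$2$ matroid structure encoded by column proportionality, available only because $P_\sigma$ is admissible and full-dimensional, that forces the conclusion.
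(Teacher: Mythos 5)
Your proof is correct, and it actually supplies a step that the paper's own proof leaves unjustified. Both arguments funnel through the same central claim: under the hypotheses, every Pl\"ucker coordinate $P^{ab}$ with $a\neq b$, $a,b\in I\cup J$ vanishes identically on $W_{\sigma}$. The paper simply \emph{asserts} this (``$P_{\sigma}$ does not contain the vertices $\Lambda_{ij}$, $i\in I$, $j\in J$'') and spends its effort on converting the vanishing back into the inequality, which it does by the moment-map formula: writing $x_s=S_s/S$ with $S_s=\sum_{m\notin I\cup J}|P^{sm}(L)|^2$ and observing that distinct $s\in I\cup J$ involve pairwise disjoint families of Pl\"ucker coordinates, so that $\sum_{s\in I\cup J}S_s\leq S$. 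You distribute the work oppositely: the passage between the inequality on $P_{\sigma}$ and the vanishing of Pl\"ucker coordinates is handled by the cheap remark that a linear function on a convex hull is maximized at a generating vertex (used in both directions), and the real effort goes into proving the vanishing for the cross pairs $i\in I\setminus J$, $j\in J\setminus I$ via the rank-two picture: the columns indexed by $I$ lie on one line, those indexed by $J$ on another, and a common \emph{nonzero} column $v_c$, $c\in I\cap J$, glues the two lines together. This is exactly the nontrivial point, and it is also the only place where $\dim P_{\sigma}=n-1$ enters (to exclude $v_c=0$ via Lemma~\ref{polytopbound}); note that the paper's written proof never visibly invokes the dimension hypothesis, even though the lemma is false without it: for $n=4$ the point with columns $(1,0),(0,0),(0,1),(1,1)$ gives the admissible triangle $\mathrm{conv}\{\Lambda_{13},\Lambda_{14},\Lambda_{34}\}$ of dimension $n-2$, on which $x_1+x_2\leq 1$ and $x_2+x_3\leq 1$ hold while $x_1+x_2+x_3\leq 1$ fails at $\Lambda_{13}$. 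So your proposal and the paper's text are complementary halves of one argument, and yours contains the half that carries the actual content.
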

 \begin{proof}
If $I\cap J\neq \emptyset$  then $P_{\sigma}$ does not contain the vertices $\Lambda _{ij}$, $i\in I$, $j\in J$, that is $P^{ij}(L)=0$ for all points $L$ from the stratum $W_{\sigma}$.  Now if $x=(x_1, \ldots , x_n)\in P_{\sigma}$ then $x=\mu (L)$ for some $L\in W_{\sigma}$. Note that  for $s\in I\cup J$ we have 
\[
x_{s} = \frac{S_{s}}{S},\;\;  S_{s}=\;\; \sum\limits_{m\notin I \cup J} P^{sm}(L),
\]
and  $S=\sum\limits _{1\leq p<q\leq n}P^{pq}(L)$.
It follows that in the different sums $S_{s}$, $s\in I\cup J$ contribute different Pl\"ucker coordinates $P^{sm}(L)$.  Therefore
\[
\sum\limits _{s\in I\cup J}x_s = \frac{1}{S}\sum\limits _{s\in I\cup J}S_{s} \leq 1.
\]
\end{proof}

\begin{cor}
If $\sum\limits_{i\in I}x_{i}=1$ and $\sum\limits_{j\in J}x_{j}=1$ are the facets of an admissible polytope $P_{\sigma}$, $\dim P_{\sigma}=n-1$, $\|I\|, \|J\|\geq 2$  and the points of $P_{\sigma}$ satisfy $\sum\limits_{i\in I}x_{i}\leq 1$ and $\sum\limits_{j\in J}x_{j}\leq 1$ then 
$I \cap J = \emptyset$.
\end{cor}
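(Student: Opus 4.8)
The plan is to argue by contradiction, using \lemref{empty} to turn the assumption $I\cap J\neq\emptyset$ into an unexpectedly strong inequality that a genuine facet of $P_\sigma$ cannot support. So suppose $I\cap J\neq\emptyset$. Since the two facets are distinct we have $I\neq J$, and as the hypothesis is symmetric in $I$ and $J$ we may assume $J\setminus I\neq\emptyset$; fix an index $s_0\in J\setminus I$. By \lemref{empty} (whose hypotheses $\sum_{i\in I}x_i\le 1$, $\sum_{j\in J}x_j\le 1$ and $I\cap J\neq\emptyset$ are exactly what we have), every point of $P_\sigma$ then satisfies the additional inequality $\sum_{s\in I\cup J}x_s\leq 1$, which, because $I\subsetneq I\cup J$ and all coordinates on $\Delta_{n,2}$ are non-negative, is strictly stronger than $\sum_{i\in I}x_i\leq 1$.

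The first key step is to read off what this forces on the facet itself. Writing $F=\{x\in P_\sigma:\sum_{i\in I}x_i=1\}$ and using the disjoint decomposition $I\cup J=I\sqcup(J\setminus I)$, for every $x\in F$ one obtains
\[
\sum_{s\in J\setminus I}x_s=\sum_{s\in I\cup J}x_s-\sum_{i\in I}x_i\leq 1-1=0 .
\]
Since $x_s\geq 0$ for all $s$, this forces $x_{s_0}=0$ at every point of $F$; that is, the coordinate functional $x_{s_0}$ vanishes identically on the facet $F$.

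The second, decisive step is a dimension count. As $F$ is a facet of the $(n-1)$-dimensional polytope $P_\sigma$, whose affine hull is the hyperplane $\sum_i x_i=2$, the affine hull of $F$ equals $A=\{\sum_i x_i=2\}\cap\{\sum_{i\in I}x_i=1\}$, of dimension $n-2$. The vanishing $x_{s_0}\equiv 0$ on $F$ propagates to $A$, so $x_{s_0}$ must be identically zero on $A$. I will finish by showing this is impossible. When $\|I\|\leq n-2$ the functionals $\mathbf 1$, $\chi_I$, $e_{s_0}$ are linearly independent (the complement of $I$ contains $s_0$ together with at least one further index), so $x_{s_0}$ is not an affine combination of $\sum_i x_i$ and $\sum_{i\in I}x_i$ and is therefore non-constant on $A$; when $\|I\|=n-1$ the unique index outside $I$ is precisely $s_0$, and $\sum_{i\in I}x_i=1$ gives $x_{s_0}=2-1=1\neq 0$ on all of $A$. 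In either case we contradict $x_{s_0}\equiv 0$, so $I\cap J\neq\emptyset$ is untenable and $I\cap J=\emptyset$.

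The main obstacle I anticipate is exactly this last step: I must rule out the possibility that imposing $x_{s_0}=0$ fails to drop the dimension of the face, which is the degenerate configuration $\|I\|=n-1$ where the extra equation is \emph{not} independent of the two defining ones. Handling that case separately—where one gets the numerical clash $0=1$ rather than a dimension drop—is what makes the argument airtight; the hypothesis $\|I\|,\|J\|\geq 2$ guarantees that we are dealing with the interior-type admissible facets produced by \thmref{n-2} rather than with the boundary facets $x_i=0$ or $x_i=1$.
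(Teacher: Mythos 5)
Your proof is correct and follows exactly the route the paper intends: the corollary is stated there as an immediate consequence of Lemma~\ref{empty}, with no further argument given. You have simply supplied the details the paper leaves implicit --- namely that the extra inequality $\sum_{s\in I\cup J}x_s\leq 1$ forces the coordinate $x_{s_0}$, $s_0\in J\setminus I$, to vanish on the facet $\{\sum_{i\in I}x_i=1\}\cap P_\sigma$, which is incompatible (by your dimension count, including the degenerate case $\|I\|=n-1$) with that face being a genuine facet of an $(n-1)$-dimensional polytope.
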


We now provide the description of the admissible polytopes of dimension $n-1$.

\begin{thm}\label{admn-1}
The admissible polytopes for $\Delta _{n,2}$ of dimension $n-1$ are  given by $\Delta _{n,2}$ and the    intersections  with  $\Delta _{n,2}$ of all  collections  of the  half-spaces of the form
 \[
H_{S} :  \sum\limits_{i\in S}x_{i} \leq 1, \;\; S\subset  \{1,\ldots , n\},\;  \|S\|=k,  \; 2\leq k\leq n-2,
\]
such that  if  $H_{S_1}, H_{S_2}$ belongs to a  collection then $S_1\cap S_2 =\emptyset$.
\end{thm}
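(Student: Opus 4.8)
The plan is to move back and forth between an $(n-1)$-dimensional admissible polytope and a matrix realizing its stratum, exploiting that a point $L\in G_{n,2}$ is a $2\times n$ matrix whose columns $v_1,\dots,v_n\in\C^2$ satisfy $P^{ij}(L)\neq 0$ exactly when $v_i,v_j$ are linearly independent. I would prove the two inclusions separately. Throughout I will use one elementary observation: the hyperplane $\sum_{i\in S}x_i=1$ never crosses an edge of $\Delta_{n,2}$ transversally. Indeed, two adjacent vertices $\Lambda_{ab},\Lambda_{bd}$ share an index, so the values $\|\{a,b\}\cap S\|$ and $\|\{b,d\}\cap S\|$ differ by $[a\in S]-[d\in S]\in\{-1,0,1\}$ and hence cannot strictly straddle $1$. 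Consequently cutting $\Delta_{n,2}$ by the half-space $H_S$ merely deletes the vertices $\Lambda_{ij}$ with $\{i,j\}\subset S$ and creates no new ones, so $\Delta_{n,2}\cap H_S$ is the convex hull of the surviving vertices $\Lambda_{ij}$.

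\textbf{Necessity.}
Let $P_\sigma$ be admissible with $\dim P_\sigma=n-1$ and choose $L\in W_\sigma$. Full-dimensionality forces every column $v_i$ to be nonzero, since a zero column would impose $x_i\equiv 0$ on $P_\sigma$, pushing it into the facet $\{x_i=0\}$ of $\Delta_{n,2}$ and so into dimension $\le n-2$. Thus $\{1,\dots,n\}$ splits into parallel classes $C_1,\dots,C_r$ according to the line $\C v_i\subset\C^2$, and $P^{ij}(L)=0$ iff $i,j$ lie in a common class, i.e.\ $\sigma=\{\{i,j\}:i,j\text{ in different classes}\}$. For a class with $\|C_t\|\ge 2$ the computation of \lemref{empty} (each $x_s$, $s\in C_t$, is a normalized sum of Pl\"ucker coordinates $P^{sm}$ with $m\notin C_t$, and distinct $s$ contribute disjoint coordinates) gives $\sum_{i\in C_t}x_i\le 1$ on $P_\sigma$; this supporting hyperplane is the facet hyperplane $H_{C_t}$ of \thmref{n-2}, the orientation being forced by the omission of the vertices $\Lambda_{ij}$, $i,j\in C_t$. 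Singleton classes impose no constraint, and the bound $\|C_t\|\le n-2$ is again forced by dimension, since a class of size $n-1$ leaves a single free index $j_0$ and collapses $P_\sigma$ to the simplex $\mathrm{conv}\{\Lambda_{ij_0}:i\neq j_0\}$ of dimension $n-2$. The classes partition $\{1,\dots,n\}$, so the sets $C_t$ are pairwise disjoint, which is exactly the content of the corollary to \lemref{empty}. Finally $P_\sigma=\Delta_{n,2}\cap\bigcap_{\|C_t\|\ge 2}H_{C_t}$, because by the transversality observation both sides are the convex hull of the same vertices $\Lambda_{ij}$ with $i,j$ in different classes.

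\textbf{Sufficiency.}
Conversely, given pairwise disjoint $S_1,\dots,S_m$ with $2\le\|S_t\|\le n-2$, I would realize the stratum explicitly, in the spirit of Proposition~\ref{main}: pick pairwise distinct lines in $\C^2$, one for each $S_t$ and one for each leftover singleton index, and let $v_i$ be a nonzero vector on the line attached to $i$. Then $P^{ij}\neq 0$ precisely when no $S_t$ contains both $i,j$, so the corresponding stratum is nonempty and its admissible polytope is $\mathrm{conv}\{\Lambda_{ij}:\text{no }S_t\ni i,j\}$. The transversality observation identifies this convex hull with $\Delta_{n,2}\cap\bigcap_tH_{S_t}$, and full-dimensionality follows because $\|S_t\|\le n-2$ guarantees, for every pair $i\neq j$, an index $k$ outside the classes of $i,j$, whence $e_i-e_j=\Lambda_{ik}-\Lambda_{jk}$ is realized; these span the tangent space of $\{\sum_i x_i=2\}$.

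\textbf{Main obstacle.}
I expect the delicate point to be the matching of the combinatorial description with the geometric one, namely that $\Delta_{n,2}\cap\bigcap_tH_{S_t}$ has no vertices beyond the $\Lambda_{ij}$ and does not drop dimension. The transversality observation settles a single cut cleanly, but for several cuts one must verify that the intermediate polytopes remain matroid polytopes, so that all their edges still point in root directions $e_p-e_q$ and the same argument reapplies; alternatively one deduces the equality from the realization by identifying the interior facets of the moment polytope with the $H_{S_t}$ through \thmref{n-2} and the corollary to \lemref{empty}. Reconciling the stratum/parallel-class picture with the half-space picture is the real work here, whereas the pairwise-disjointness condition is handed to us directly by \lemref{empty}.
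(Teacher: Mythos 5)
Your proposal is sound and takes a genuinely different route from the paper. The paper proves only the forward inclusion, and does so facet by facet: every facet of an $(n-1)$-dimensional admissible polytope $P_{\sigma}$ meeting $\stackrel{\circ}{\Delta}_{n,2}$ is itself an admissible polytope of dimension $n-2$, hence by \thmref{n-2} lies on a hyperplane $\sum_{i\in S}x_{i}=1$; then $P_{\sigma}$ is recovered as the intersection of $\Delta_{n,2}$ with its interior facet half-spaces, disjointness of the index sets comes from \lemref{empty} exactly as you invoke it, and the case of several interior facets is dispatched ``by induction''. The converse inclusion (that every admissible collection is actually realized by a stratum) is only asserted in the text surrounding the theorem. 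You instead work from a matrix representative: the parallel classes of the columns $v_{i}$ produce the partition, the Pl\"ucker computation of \lemref{empty} gives one inequality per class of size at least $2$, disjointness is automatic because classes partition $\{1,\dots,n\}$, and you supply the realization (in the spirit of Proposition~\ref{main}) for sufficiency. This buys a self-contained, two-directional proof in which the rank-two matroid structure is explicit; the price is exactly the obstacle you flag: identifying $\mathrm{conv}\{\Lambda_{ij}\}$ with the half-space intersection after \emph{several} cuts requires knowing that no new vertices appear, i.e.\ that the intermediate polytopes still have all edges in the directions $e_{p}-e_{q}$. That is true --- they are matroid polytopes of rank-two matroids, so the Gelfand--Serganova edge criterion applies, or one checks directly that the section $\{\sum_{i\in S}x_{i}=1\}\cap\Delta_{n,2}$ is a product of two simplices whose edges again join vertices sharing an index --- so the gap is fillable, but it is precisely the step the paper's facet-based argument never has to confront, since a full-dimensional polytope is automatically the intersection of its facet half-spaces.

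One caveat is worth recording. Your spanning argument for full-dimensionality (``for every pair $i\neq j$ there is $k$ outside the classes of $i$ and $j$'') fails when two classes cover all indices, e.g.\ $n=6$, $S_{1}=\{1,2,3\}$, $S_{2}=\{4,5,6\}$: on $\Delta_{6,2}$ the inequalities $\sum_{i\in S_{1}}x_{i}\leq 1$ and $\sum_{i\in S_{2}}x_{i}\leq 1$ force both to be equalities, so the intersection is a product of two triangles of dimension $4=n-2$, not $n-1$. This is less a defect of your argument than of the statement itself: the theorem's hypotheses admit complementary pairs $\{S,S^{c}\}$ (and more generally pairs of classes whose union is $\{1,\dots,n\}$), which must be excluded for the dimension claim to hold. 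Your attempt at the sufficiency direction is exactly what exposes this; the paper's one-directional proof never meets it.
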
 

It follows   that any  polytope  which can be obtained as  the intersection with $\Delta _{n,2}$ of  a half-space of the form $\sum\limits_{s\in S}x_{s}\leq 1$, where $ S\subset  \{1,\ldots ,n\}$, $\|S\|=k$  and  $2\leq k\leq n-2$ is an admissible $(n-1)$-dimensional  polytope. Also, any polytope which can be obtained  as the intersection with $\Delta _{n,2}$ of the intersection of half-spaces of   the form
$\sum\limits_{i\in I}x_{i}\leq 1$ and $\sum\limits_{j\in J}x_{j}\leq 1$ such that $I\cap J = \emptyset$  is an admissible polytope, where $\|I\|, \|J\|\geq 2$.  Continuing in this way we describe  all admissible polytopes of dimension $n-1$.

\begin{proof}
Any admissible polytope  $P_{\sigma}$ of dimension $n-1$ which is not $\Delta _{n,2}$ has a facet which intersects $\stackrel{\circ}{\Delta}_{n,2}$.

1) If $P_{\sigma}$ has exactly one facet which intersect $\stackrel{\circ}{\Delta}_{n,2}$ then according to Theorem~\ref{n-2}, we have that $P_{\sigma}$ is given by those points in $\Delta _{n,2}$ which  satisfy  one of the inequalities  $\sum\limits_{i\in I}x_{i}\leq 1$ or   $\sum\limits_{i\in I}x_{i}\geq  1$ for some $I \subset  \{1,\ldots, n\}$, $\| I\|=l$  and $2\leq l\leq [\frac{n}{2}]$. Since  the points from $\Delta_{n,2}$  satisfy   $\sum\limits_{i=1}^{n}x_i = 2$ it follows that the second  inequality is equivalent to $\sum\limits_{j\in \{1, \ldots , n\}\setminus I}x_{j}\leq 1$.

2) Assume that  $P_{\sigma}$ has two facets which intersect $\stackrel{\circ}{\Delta}_{n,2}$ and which are  given by
\begin{equation}\label{facets1}
\sum\limits_{i\in I}x_{i}=1 \; \text{and}\; \sum\limits_{j\in J}x_{j} =1,
\end{equation}
which is equivalent to 
\begin{equation}\label{facets2}
\sum\limits_{i\in \{1, \ldots ,n\}\setminus I}x_{i}=1 \; \text{and}\; \sum\limits_{j\in \{1,\ldots, n\}\setminus J}x_{j} =1,
\end{equation}
where $I, J\subset \{1,\ldots, n\}$, $\|I\|, \|J\|\geq 2$.
Therefore, we can always assume that   $P_{\sigma}$ is given by 
\begin{equation}\label{prvi}
\sum\limits_{i\in I}x_{i}\leq 1 \; \text{and}\; \sum\limits_{j\in J}x_{j}\leq 1,
\end{equation}
Then  Lemma~\ref{empty} implies that 
$I\cap J=\emptyset$.

Arguing by induction one can prove  in the same way the statement for an admissible polytope $P_{\sigma}$ with an arbitrary finite number of interior  facets.
\end{proof}

\begin{ex}
It follows from Theorem~\ref{admn-1} that the admissible polytopes for $G_{4,2}$ of dimension $3$ are $\Delta _{4,2}$ and the intersection of $\Delta _{4,2}$ with the half spaces $x_i+x_j\leq 1$, $1\leq i<l\leq 4$. There are ${4\choose 2}=6$ such polytopes.
\end{ex}

\begin{ex}
The admissible polytopes for $G_{5,2}$  of dimension $4$ are: $\Delta _{5,2}$ and the polytope obtained by the intersection of $\Delta _{5,2}$ with the \begin{enumerate}
\item half spaces $x_i+x_j\leq 1$, $1\leq i<j\leq 5$
\item half space $x_i+x_j+x_k\leq 1$, $1\leq i<j<k\leq 5$, which can be written as $x_p+x_q\geq 1$, $1\leq p\leq q\leq 5$
\item intersection of the half spaces $x_i+x_j\leq 1$ and $x_p+x_q\leq 1$, where $\{i,j\}\cap \{p,q\}=\emptyset$, $1\leq i<j\leq 5$, $1\leq p<q\leq 5$.
\end{enumerate}
There are $10$ polytopes of  type (1) and any of them has $9$ vertices,  there are  $10$ polytopes of  type (2) and any of them has $7$ vertices and there are $15$ polytopes of type (3) and any of them has $8$ vertices.  
\end{ex}

\begin{ex}
The admissible polytopes for $G_{6,2}$  of dimension $5$ are: $\Delta _{6,2}$ and the polytope obtained by the intersection of $\Delta _{6,2}$ with the 
\begin{enumerate}
\item half spaces $x_i+x_j\leq 1$, $1\leq i<j\leq 6$;
\item half spaces $x_i+x_j+x_k\leq 1$, $1\leq i<j<k\leq 6$;
\item half spaces $x_i+x_j+x_k+x_l\leq 1$, $1\leq i<j<k<l\leq 6$ which can be written as $x_p+x_q\geq 1$, $1\leq p< q\leq 6$;
\item intersection of the half spaces $x_i+x_j\leq 1$ and $x_p+x_q\leq 1 $, where $\{i,j\}\cap \{p, q\}=\emptyset$, $1\leq i<j\leq 6$, $1\leq p<q\leq 6$.
\item intersection of the half spaces $x_i+x_j\leq 1$ and $x_p+x_q+x_s\leq 1$, where $\{i,j\}\cap\{p,q,s\} = \emptyset$, $1\leq i<j\leq 6$, $1\leq p<q<s\leq 6$.
\end{enumerate}

The number of these polytopes and their vertices are as follows: type (1)  -  $(15, 14)$, type (2) -  $(20, 12)$, type (3) -  $(15, 9)$, type (4) - $(45, 13)$ and type (5) - $(60, 11)$.    
\end{ex}

\section{Spaces  of parameters for $G_{n,2}$}

The algebraic torus $(\C ^{\ast})^{n}$ acts canonically on $G_{n,2}$ and its action is an extension of $T^n$- action. The strata $W_{\sigma}$ are invariant under $(\C ^{\ast})^{n}$-action and the corresponding orbit spaces $F_{\sigma} = W_{\sigma}/(\C ^{\ast})^{n}$ we call the spaces of parameters of the strata. 
In this section we discuss the spaces of parameters of  the strata in $G_{n,2}$ whose admissible polytopes intersect the  interior of  $\Delta _{n,2}$.  Moreover, we show  that the space $\mathcal{F}_{n}$ described in~\cite{BT-U}  is a universal space of parameters for $G_{n,2}$ and describe the virtual  spaces of parameters of these  strata. The notions of universal space of parameters an virtual spaces of parameters are  defined in~\cite{BT-2} and~\cite{BT-2nk}.

\subsection{The spaces of parameters for the strata in $G_{n,2}$}
Let us fix  the chart  $M_{12}$, this can be done without loss of generality because of the action of symmetric group $S_{n}$.  The elements of this chart can ne  represented by  the matrices which have the first $2\times 2$-minor equal to identity matrix. We consider the coordinates for the points from $M_{12}$  as the columns of the corresponding matrices that is
$(z_3,\ldots , z_{n}, w_{3}, \ldots, w_{n})$.  The charts are invariant for the action of the algebraic torus  $(\C ^{*})^{n}$ and  this action is,  in the local  coordinates of the chart $M_{12}$,  given by  
\begin{equation}\label{action}
(\frac{t_3}{t_1}z_3, \ldots , \frac{t_n}{t_1}z_{n},\frac{t_3}{t_2}w_{3}, \ldots , \frac{w_n}{t_2}w_{n}).
\end{equation}
If we put $\tau _{1} = \frac{t_3}{t_1},\ldots ,\tau _{n-2} =\frac{t_n}{t_1}, \tau _{n-1} = \frac{t_3}{t_2}$, we obtain  that
\[
\frac{t_{i}}{t_2} = \frac{\tau _{i-2}\cdot \tau _{n-1}}{\tau _{1}}, \;\; 4\leq i\leq n.
\]

\subsubsection{ The main stratum}\label{submain}

The main stratum $W$ is characterized by the condition that its points have non-zero  Pl\"ucker coordinates, it belongs to any chart and   its  admissible polytope is $\Delta _{n,2}$

. The  main stratum can be,  in the chart $M_{12}$, written   by the system of equations
\begin{equation}\label{orbit}
c_{ij}^{'}z_iw_{j} = c_{ij}z_iw_{j}, \;\; 3\leq i<j\leq n,
\end{equation} 
where the parameters $(c_{ij}^{'}:c_{ij})\in \C P^1$  and 
$c_{ij}, c_{ij}^{'}\neq 0$ and $c_{ij}\neq c_{ij}^{'}$ for all $3\leq i<j\leq n$.  

The number of parameters is $N={n-2 \choose 2}$ and it follows from~\eqref{orbit} that these parameters satisfy the following equations
\begin{equation}\label{relat}
 c_{3i}^{'}c_{3j}c_{ij}^{'} = c_{3i}c_{3j}^{'}c_{ij}, \;\; 4\leq i<j\leq n.
\end{equation}
The number of these equations is $M={n-3\choose 2}$. . From~\eqref{relat} we obtain that 
\begin{equation}\label{relatmain1}
(c_{ij}:c_{ij}^{'}) = (c_{3i}^{'}c_{3j}: c_{3i}c_{3j}^{'}), \;\; 4\leq i<j\leq n.
\end{equation}
\begin{equation}\label{relatmain2}
 (c_{3i}:c_{3i}^{'})\neq (c_{3j}:c_{3j}^{'}), \;\;  4\leq i<j\leq n.
\end{equation}

Note that~\eqref{relat} gives the embedding of the space $F_{n}=W/(\C ^{\ast})^{n}$ in $(\C P^{1})^{N}$, $N={n-2\choose 2}$.
Moreover,  it follows from~\eqref{relat} that $F_{n}$ is an open algebraic manifold in $(\C P^{1})^{N}$ given by the intersection of the cubic hypersurfaces~\eqref{relat} and  the conditions that $(c_{ij}^{'}:c_{ij})\in \C P^1\setminus A$. The dimension of $F_{n}$ is $2(n-3)$ what is exactly equal to $2(N-M)$.

\subsubsection{ An arbitrary stratum}\label{subarb}

Any stratum $W_{\sigma}$ consists of those points from $G_{n,2}$  for which some fixed Pl\"ucker coordinates are zero. If $W_{\sigma}\subset M_{12}$  then the stratum  $W_{\sigma}$ is defined by the condition $P^{1i}=0, P^{2j}=0$ and $P^{pq}=0$, for some   $3\leq i, j\leq n$ ,  $3\leq p<q\leq n$. In the local coordinates of the chart $M_{12}$ this condition translates to $w_{i}=z_{j}=0$ and $z_{p}w_{q} = z_{q}w_{p}$. Therefore, according to Lemma 14.10  from~\cite{BT-2nk}  any stratum $W_{\sigma}\subset M_{12}$ is obtained  by restricting  the  surfaces~\eqref{orbit} to some  $\C ^{J}$, where $J\subset \{(3,3),(3,4), \ldots , (n-1, n), (n,n)\}$ and $\| J\| =l$ for some $0\leq l\leq Q$, where $Q = (n- 2)^2$. 
In particular, if an admissible polytope for $W_{\sigma}$ has a maximal dimension $n-1$ we have  that $l\geq n-1$. It implies that  if the space of parameters $F_{\sigma} = W_{\sigma}/(\C ^{\ast})^{n}$ for $W_{\sigma}$ is not a point, then  it can be  obtained by restricting the intersection of the  cubic hypersurfaces~\eqref{relat} to some $q$  factors  $\C P^{1}_{B}=\C P^{1}\setminus B$ in $(\C P^{1})^{N}$, where $B=\{(1:0), (0:1)\}$  and $0\leq q\leq l$.

\begin{prop}\label{point}
If the admissible polytope $P_{\sigma}$ of  a stratum $W_{\sigma}$ intersects the  interior of $\Delta _{n,2}$  and $\dim P_{\sigma} =n-2$ then the space of parameters $F_{\sigma}$ is a point.
\end{prop}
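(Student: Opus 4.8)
The plan is to reduce to an explicit normal form using the symmetric group and then read the stratum off in a single Pl\"ucker chart, where it will turn out to be a single $(\C ^{\ast})^{n}$-orbit; the equality $F_\sigma = \{\text{pt}\}$ is then immediate.

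First I would invoke Theorem~\ref{n-2}. Since $S_n$ permutes the strata $W_\sigma$ and is equivariant for the $(\C ^{\ast})^{n}$-action (intertwined by the corresponding permutation of $(\C ^{\ast})^{n}$), it carries spaces of parameters to spaces of parameters, so the property ``$F_\sigma$ is a point'' is $S_n$-invariant. Hence I may assume that $P_\sigma$ lies on a hyperplane $\sum_{s\in S}x_s=1$ with $S\subset\{1,\ldots,n\}$, $\|S\|=p$, $2\le p\le[\frac{n}{2}]$. A vertex $\Lambda_{ij}$ lies on this hyperplane exactly when $|\{i,j\}\cap S|=1$, so $P_\sigma$ is the convex hull of the $\Lambda_{ij}$ with $i\in S$, $j\in\bar S:=\{1,\ldots,n\}\setminus S$. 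By the definition of a stratum this means $W_\sigma$ is cut out by $P^{ij}(L)=0$ whenever $\{i,j\}\subset S$ or $\{i,j\}\subset\bar S$, while $P^{ij}(L)\neq 0$ precisely for $i\in S$, $j\in\bar S$.

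Next, again by $S_n$, I arrange $1\in S$ and $2\in\bar S$. Then $P^{12}\neq 0$ on $W_\sigma$, so $W_\sigma\subset M_{12}$ and I may use the local coordinates $(z_3,\ldots,z_n,w_3,\ldots,w_n)$. Writing $A=S\cap\{3,\ldots,n\}$ and $B=\bar S\cap\{3,\ldots,n\}$, so that $\|A\|+\|B\|=n-2$, the identities $P^{1i}=w_i$ and $P^{2j}=-z_j$ turn the vanishing conditions into $w_i=0$ for $i\in A$ and $z_j=0$ for $j\in B$, together with $z_i\neq 0$ for $i\in A$ and $w_j\neq 0$ for $j\in B$. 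The step requiring care is to verify that the remaining relations $P^{ij}=z_iw_j-z_jw_i$ for $3\le i<j\le n$ impose nothing further: if $i,j$ lie in the same block this vanishes automatically, while if $i\in A$, $j\in B$ then $z_iw_j\neq 0$ and $z_jw_i=0$, so $P^{ij}\neq 0$ as required. Hence $W_\sigma\cong(\C ^{\ast})^{n-2}$, parametrized freely by the nonzero $z_i\ (i\in A)$ and $w_j\ (j\in B)$, which also confirms the dimension count $\dim_{\R}W_\sigma=2(n-2)=2\dim P_\sigma$.

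Finally I read off the action from~\eqref{action}: it sends $z_i\mapsto(t_i/t_1)z_i$ for $i\in A$ and $w_j\mapsto(t_j/t_2)w_j$ for $j\in B$. As $A$ and $B$ are disjoint subsets of $\{3,\ldots,n\}$ not containing $1$ or $2$, the multipliers $t_i/t_1\ (i\in A)$ and $t_j/t_2\ (j\in B)$ can be prescribed arbitrarily and independently in $\C ^{\ast}$, so $(\C ^{\ast})^{n}$ acts transitively on $W_\sigma$ and $F_\sigma=W_\sigma/(\C ^{\ast})^{n}$ is a single point. The only genuine content lies in the verification of the previous paragraph, namely that the quadratic Pl\"ucker relations hold automatically on the stratum so that no modulus survives; in the language of \S\ref{subarb} this says that each parameter $(c_{ij}:c^{'}_{ij})$ is pinned to a boundary value, because at most one of the competing products $z_iw_j$, $z_jw_i$ is nonzero, whence the number $q$ of free $\C P^1$-factors is $0$. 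Once this is in place, transitivity is a one-line linear-algebra check.
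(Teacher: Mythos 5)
Your proof is correct, and it arrives at the same endpoint as the paper's own argument --- namely, that in the chart $M_{12}$ the stratum is exactly the set of points with $z_i\in\C^{\ast}$ for $i\in A$, $w_j\in\C^{\ast}$ for $j\in B$ and all other coordinates zero, on which $(\C^{\ast})^{n}$ visibly acts transitively --- but it gets to that normal form by a different mechanism. The paper never invokes Theorem~\ref{n-2}: it combines Lemma~\ref{polytopbound} (interiority of $P_{\sigma}$ forces some $z_i\neq 0$ and some $w_j\neq 0$) with the fact, quoted from~\cite{BT-2}, that $\dim P_{\sigma}$ equals the dimension of the maximal torus acting freely on $W_{\sigma}$; thus $\dim P_{\sigma}=n-2$ means the stabilizer is two-dimensional, and this pins down the zero pattern: after a permutation $z_i\neq 0$ exactly for $3\leq i\leq l$, and then $w_j$ must vanish for $3\leq j\leq l$ and be nonzero for $l+1\leq j\leq n$ (an overlap of the two index sets would cut the stabilizer down to dimension one, while a common zero would push $P_{\sigma}$ into $\partial\Delta_{n,2}$). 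You instead derive the same zero pattern combinatorially: Theorem~\ref{n-2} gives $P_{\sigma}=\Delta_{n,2}\cap\{\sum_{s\in S}x_s=1\}$, whose vertices are exactly the $\Lambda_{ij}$ with $i\in S$, $j\in\bar{S}$, and since the vertex set of $P_{\sigma}$ coincides with $\{\Lambda_{ij}:\{i,j\}\in\sigma\}$, this determines $\sigma$, hence the full Pl\"ucker vanishing pattern. Your route costs a little more writing --- including the worthwhile check that the quadratic relations $P^{ij}=z_iw_j-z_jw_i$ are automatically compatible with the pattern, so no modulus survives --- but it is self-contained within this paper and avoids the appeal to the stabilizer-dimension theorem of~\cite{BT-2}; the paper's argument is shorter but leans on that external input and is terser about why the $w_j$ vanish where they do. Both arguments are valid, and they share the same skeleton: chart reduction by the $S_n$-action, explicit coordinate description of $W_{\sigma}$, and a one-line transitivity observation at the end.
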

\begin{proof}
 Since $P_{\sigma}$ is an interior polytope in $\Delta _{n,2}$  it follows from Lemma~\ref{polytopbound} that  there must exist  $z_{i}, w_{j}\neq 0$ for some $3\leq i,j\leq n$.   Because of the action of the symmetric group we can assume that $z_{i}\neq 0$ for  $3\leq i\leq l\leq n$. Note that  $l$ is $\geq 3$, but it must be that  $l\leq n-1$. This is because  $\dim P_{\sigma}=n-2$ , so  the stabilizer for $T^n$ action on $W_{\sigma}$ is of dimension $2$, that is $T^{n-2}$ is a maximal torus which acts freely on $W_{\sigma}$. Then we have that $w_{j}\neq 0$ for $l+1\leq j\leq n$ and $w_{j}=0$ for $3\leq j\leq l$. It implies that in the chart $M_{12}$ the stratum $W_{\sigma}$ is given by the coordinates $(z_{3}, \ldots z_{l}, 0, \ldots , 0, w_{l+1}, \ldots , w_{n})$, which means that $W_{\sigma}$ consists of one $(\C ^{*})^{n}$-orbit, that is $F_{\sigma}$ is a point.
\end{proof}

\subsection{An universal space of parameters for $G_{n,2}$}
The universal space of parameters $\mathcal{F}_{n}$ for a $(2n,k)$- manifold $M^{2n}$ with an effective action of the compact torus $T^k$, $k\leq n$, is a compactification of the space of parameters $F_{n}$ of the main stratum, see~\cite{BT-2nk}.
The  universal space of parameters  for the Grassmannian $G_{5,2}$ regarding to the canonical action of $T^5$  is defined and explicitly described in~\cite{BT-2}. The general notion of universal space of parameters is axiomatized in~\cite{BT-2nk}. In~\cite{klem} it is proved that the Chow quotient  $G_{n,2}\!/\!/ (\C ^{\ast})^{n}$ as defined in~\cite{Kap}  provides an  universal space  of parameters for $G_{n,2}$ regarded to the canonical $T^n$ -action.    The method  in~\cite{klem}  is based on the embedding   of the space  $F_n$  in $\C P^{N}$, $N = {n+1\choose 4}$ via the cross ratio of the Pl\"ucker coordinates.   In this paper we show   that the space $\mathcal{F}_{n}$  described in~\cite{BT-U}  is a universal space of parameters for $T^n$-action on $G_{n,2}$. This space
$\mathcal{F}_{n}$  is obtained in~\cite{BT-U} by the techniques of a wonderful compactification of the space of parameters $F_n$ of the main stratum, starting form the requirement that the homeomorphisms of $F_n$ which are defined by the transition functions between the standard Pl\"ucker charts for $G_{n,2}$ extends to the homeomorphisms for $\mathcal{F}_{n}$.  

Let us fix the  chart $M_{12}$ and consider a stratum $W_{\sigma}\subset M_{12}$. As already remarked, in the local coordinates for the chart $M_{12}$ this stratum is defined by the conditions $z_{s}=w_{m}=0$ and $z_{i}w_{j} = z_{j}w_{i}$ for some   $3\leq s, m\leq n$ and  $3\leq i < j\leq n$. On the other hand the main stratum $W$  is in the chart $M_{12}$ given by~\eqref{relat} and it is a dense  set in $G_{n,2}$. Using these two facts, one can  try to assign the new space of parameters $\tilde{F}_{\sigma}$ to $W_{\sigma}$.  The question which arises here is in which ambient space that is a  compactification $\mathcal{F}$  for $F$ the corresponding  assignment is to be done.  The  answer to this question  is determined by the condition that  such  assignment must not depend on a fixed chart.  

First it is obvious that $\mathcal{F}_{n}$ has to contain  the closure $\bar{F}_{n}$ of $F_n$ in $(\C P^{1})^{N}$. This closure is given by the intersection~\eqref{relat} in    $(\C P^{1})^{N}$. Second, the transition functions  between the charts produce  the homeomorphisms between the records of the space $F_{n}$ in these  charts. The compactification $\mathcal{F}_{n}$ should be such that these homeomorphisms extend to the homeomorphisms of  $\mathcal{F}_{n}$ for  an arbitrary two charts. 

Starting from these requirements the following theorem is proved   in~\cite{BT-U}:

\begin{thm}\label{universal}
Let the  manifold  $\mathcal{F}_{n}$ is obtained by the  wonderful compactification of $\bar{F}_{n}$  with the generating set  of subvarieties  $\bar{F}_{I}\subset \bar{F}_{n}$,  defined by $(c_{ik}:c_{ik}^{'}) = (c_{il} :c_{il}^{'}) = (c_{kl}:c_{kl}^{'}) = (1:1)$ for $ikl \in I$,  where $I \subset \{ikl \; | \: 3\leq i<k<l\leq n\}$. Then any homeomorphism $f_{ij, kl} : F_{n}\to F_{n}$ induced by the transition function  between the charts $M_{ij}$ and $M_{kl}$ extends to the homeomorphism of $\mathcal{F}_{n}$.
\end{thm}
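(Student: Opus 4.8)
The plan is to deduce the statement from the functoriality of the wonderful compactification. The space $\mathcal{F}_n$ is obtained from $\bar{F}_n$ by iterated blow-ups along the strict transforms of the generating subvarieties $\bar{F}_I$, so a birational self-map of $\bar{F}_n$ whose indeterminacy locus, together with that of its inverse, is contained in $\bigcup_I \bar{F}_I$ and which permutes the $\bar{F}_I$ will lift to a homeomorphism of the blow-up. Thus it suffices to (i) extend $f_{ij,kl}$ to a birational self-map of $\bar{F}_n$, (ii) show that this extension carries the generating set $\{\bar{F}_I\}$ to itself, and then (iii) invoke the universal property of $\mathcal{F}_n$.

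First I would make the transition map explicit. Using~\eqref{relatmain1} and~\eqref{relatmain2} one coordinatizes $F_n$ by the free parameters $c_{3i}$, $4\le i\le n$, and identifies it, via the orbit equations~\eqref{orbit} giving $(c_{ab}:c_{ab}^{'}) = (z_a w_b : z_b w_a)$, with an open part of the configuration space of $n$ labelled points on $\C P^1$. Under this identification the condition $(c_{ab}:c_{ab}^{'}) = (1:1)$ entering the definition of $\bar{F}_I$ reads $z_a w_b = z_b w_a$, that is $P^{ab}(L) = 0$: the marked points $a$ and $b$ collide, so $\bar{F}_{\{ikl\}}$ is exactly the locus where the triple $i,k,l$ comes together. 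The homeomorphism $f_{ij,kl}$ is then the rational change of coordinates passing from the normalization attached to the chart $M_{ij}$ to the one attached to $M_{kl}$ --- a M\"obius change of coordinate on $\C P^1$ together with the relabelling of the distinguished pair --- and by hypothesis it is a biregular automorphism of $F_n$.

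The crucial step is to show that $f_{ij,kl}$ preserves the generating set. Each $\bar{F}_I$ is cut out by the simultaneous collisions prescribed by the triples of $I$, and ``a prescribed triple of marked points coincides'' is a property of the configuration itself, independent of which pair has been placed in standard position. Hence $f_{ij,kl}$ sends each $\bar{F}_I$ to a subvariety $\bar{F}_{I^{'}}$ of the same form, where $I\mapsto I^{'}$ is the relabelling of triples induced by the change of chart, and the same holds for $f_{ij,kl}^{-1}$. Since a birational map between smooth varieties has indeterminacy in codimension at least two, and one checks that the codimension-two loci into which the change of normalization can degenerate are precisely the triple-collision loci, the indeterminacy of $f_{ij,kl}$ on $\bar{F}_n$ is contained in $\bigcup_I \bar{F}_I$. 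Verifying this in coordinates --- tracking the transformation of the parameters $c_{ab}$ and confirming that the loci $(c_{ik}:c_{ik}^{'}) = (c_{il}:c_{il}^{'}) = (c_{kl}:c_{kl}^{'}) = (1:1)$ are merely permuted, with no indeterminacy surfacing off the building set --- is the main obstacle and the only genuinely computational part of the argument.

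Finally, since both $f_{ij,kl}$ and its inverse have indeterminacy supported on the building set that $\mathcal{F}_n$ blows up and both permute that building set, the universal property of the wonderful compactification promotes $f_{ij,kl}$ to a morphism $\mathcal{F}_n\to\mathcal{F}_n$; applying the same to $f_{ij,kl}^{-1}$ yields a two-sided inverse, so the extension is the desired self-homeomorphism of $\mathcal{F}_n$, permuting the boundary divisors according to $I\mapsto I^{'}$. As an alternative to the general machinery, one may instead equip $\mathcal{F}_n$ with the explicit affine charts produced by the iterated blow-up, write $f_{ij,kl}$ in these charts, and check directly that the resulting expressions are regular and mutually inverse; this route is more computational but entirely self-contained.
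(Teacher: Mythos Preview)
The paper does not actually prove this theorem. Immediately before the statement it says ``Starting from these requirements the following theorem is proved in~\cite{BT-U}'', and \cite{BT-U} is listed as ``in preparation''; no argument is supplied in the present paper. So there is nothing to compare your proposal against on the level of proof strategy.

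That said, your outline is the natural one and is in the spirit of what the authors announce: identify $F_n$ with an open piece of the configuration space of $n$ marked points on $\C P^1$, recognise the chart-transition homeomorphisms as M\"obius reparametrisations together with a relabelling of the distinguished pair, observe that such maps preserve the collision stratification defining the building set $\{\bar F_I\}$, and then lift through the wonderful compactification. Two points deserve more care than you give them. First, the ``universal property'' you invoke is not quite the usual one: what you need is that a birational self-map of $\bar F_n$ which is biregular off the building set and carries the building set to itself extends over the iterated blow-up; this is true but should be argued (e.g.\ chart by chart on the blow-up, or by citing the functoriality results for wonderful compactifications in~\cite{LL}). Second, you correctly flag that the genuine content is checking, in the coordinates $(c_{ab}:c_{ab}')$, that the indeterminacy of the extended $f_{ij,kl}$ on $\bar F_n$ lies in $\bigcup_I \bar F_I$ and that the loci $\bar F_I$ are permuted --- but you do not actually perform this check. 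Since the paper itself defers the proof to~\cite{BT-U}, your sketch is at the right level for a citation-style summary, but as a self-contained proof it still owes the reader that computation.
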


We can now proceed as in the case of $G_{5,2}$. To any stratum  $W_{\sigma}$ we assign the virtual space of parameters $\tilde{F}_{\sigma , 12}\subset \mathcal{F}_{n}$ in the chart $M_{12}$ using the fact that the main stratum $W$  is a dense set in $G_{n,2}$ . In this regard we differentiate the following cases.

\begin{enumerate}
\item $W_{\sigma}\subset M_{12}$ -  we assign  $\tilde{F}_{\sigma , 12} \subset \mathcal{F}_{n}$ using the description~\eqref{orbit} of the main stratum.
\item $W_{\sigma}\cap M_{12}=\emptyset$ - we consider a chart $M_{ij}$ such that $W_{\sigma}\subset M_{ij}$ and assign to $W_{\sigma}$ the space $\tilde{F}_{\sigma , ij}$ using~\eqref{orbit}.  Then using  homeomorphism for $\mathcal{F}_{n}$ stated by Theorem~\ref{universal} we assign to $W_{\sigma}$  the subset $\tilde{F}_{\sigma, 12}\subset \mathcal{F}_{n}$ which is the image of   $\tilde{F}_{\sigma , ij}\mathcal{F}_{n}$ by this homeomorphism.   Proposition 9.11
from~\cite{BT-2}  applies here directly to show  that $\tilde{F}_{\sigma, 12}$ does not depend on the choice of a chart $M_{ij}$ which contains $W_{\sigma}$. 
\end{enumerate}

\begin{rem}
By the action of the symmetric group $S_n$ the same construction holds for an arbitrary chart $M_{ij}$, that is to any stratum $W_{\sigma}$ one can assign $\tilde{F}_{\sigma, ij}\subset \mathcal{F}_{n}$.
\end{rem}

\begin{rem}\label{revir}
It follows from~\eqref{orbit} that for  the main stratum it holds $\tilde{F}_{ij} \cong F$. Moreover, if the stratum $W_{\sigma}$ is defined by the condition that is has exactly one zero Pl\"ucker coordinate it also  follows from~\eqref{orbit}  that $\tilde{F}_{\sigma, ij} \cong F_{\sigma}$. This
is also true for the strata which have exactly two zero Pl\"ucker coordinates $P^{ij}$ and $P^{kl}$ where $i,j\neq k,l$. For all other strata it follows from~\eqref{orbit} that the corresponding  virtual  and real spaces of parameters are not homeomorphic, that is the virtual spaces of parameters are "bigger". For example, the space of parameters of the  stratum in $G_{n,2}$ defined by $P^{13}=P^{14}=P^{34}=0$ is, by the discussion preceding  Proposition~\ref{point}  homeomorphic to $(\C P^{1}_{A})^{\frac{(n-4)(n-5)}{2}}$ subject to  the   relations of the type~\eqref{relat}. Its   universal space of parameters is by~\eqref{orbit} homeomorphic to $\C P^{1}\times (1:0)^{2n-8}\times (\C P^{1}_{A})^{\frac{(n-4)(n-5)}{2}}$ subject to the relations~\eqref{relat}.
\end{rem}

It si straightforward to verify that  for an arbitrary chart $M_{ij}$ it  holds
\begin{equation}\label{union}
\bigcup\limits_{\sigma}\tilde{F}_{\sigma, ij}=\mathcal{F}_{n}.
\end{equation}

\begin{rem}
To illustrate~\eqref{union}  we follows~\cite{BT-2} and consider the Grassmann manifold $G_{5,2}$ and  fix  the chart $M_{12}$. 
 In the procedure of the wonderful compactification  $\mathcal{F}_{5}$  is obtained  by the blowing up of $\bar{F}_{5}$ at the point   $S= ((1:1), (1:1), (1:1))$.  Now,  in the chart $M_{12}$ the space of parameters of the stratum  $W_{\sigma}$ defined by  $P^{34}=P^{35}=P^{45}=0$ and $P^{ij}\neq 0$ for all others $i, j$,  is the point $S$.  The virtual space of parameters for $W_{\sigma}$ we can explicitly  obtain if we look at  this  stratum  in the chart $M_{13}$. Namely, the local coordinates  for  $W_{\sigma}$ in $M_{13}$ are $z_2, z_4= z_5=0, w_2, w_4, w_{5}$.  Therefore, it follows from~\eqref{orbit} that $\tilde{F}_{\sigma, 13}  = (1:0)\times (1:0)\times \C P^1 \cong \C P ^1$, which implies that $\tilde{F}_{\sigma, 12}\cong \C P^1$, that is $\tilde{F}_{\sigma, 12}$ is homeomorphic to the exceptional divisor at the point $S$.
\end{rem}

In addition,  in an analogous way as it  was done for $G_{5,2}$ in~\cite{BT-2},  paragraph 9.2, it can be proved:
\begin{prop}
There exists a canonical projection $g_{\sigma, ij} : \tilde{F}_{\sigma, ij} \to F_{\sigma}$ for any admissible set $\sigma$ and any chart $M_{ij}$.
\end{prop}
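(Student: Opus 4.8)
The plan is to realize $g_{\sigma,ij}$ as the composition of the structure morphism of the wonderful compactification with the coordinate projection onto the factors that actually parametrize $F_\sigma$. By Theorem~\ref{universal}, $\mathcal{F}_n$ is the wonderful compactification of $\bar{F}_n\subset(\C P^1)^N$ with centres the subvarieties $\bar{F}_I$, and as such it carries a canonical continuous (indeed algebraic) contraction $\pi:\mathcal{F}_n\to\bar{F}_n$ which is a homeomorphism over the open dense locus $F_n$ and collapses each exceptional divisor onto the corresponding centre $\bar{F}_I$. Since every $\bar{F}_I$ lies in $\bar{F}_n\setminus F_n$, the map $\pi$ is an isomorphism precisely away from the boundary.

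First I would record the explicit coordinate description of the real space of parameters. In the chart $M_{ij}$ the stratum $W_\sigma$ is cut out by conditions $z_s=w_m=0$ together with the orbit relations $z_pw_q=z_qw_p$, so by~\eqref{orbit} the quotient $F_\sigma=W_\sigma/(\C^\ast)^n$ is described by the surviving parameters $(c_{pq}:c'_{pq})$, ranging over exactly those pairs $(p,q)$ for which neither $z_p$ nor $w_q$ is forced to vanish, living in a product of factors $\C P^1_A$ subject to the relations~\eqref{relat} restricted to that index set. Let $pr_\sigma:(\C P^1)^N\to(\C P^1)^{N_\sigma}$ be the projection onto these surviving factors. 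I would then define
\[
g_{\sigma,ij}=pr_\sigma\circ\pi\big|_{\tilde{F}_{\sigma,ij}}:\ \tilde{F}_{\sigma,ij}\hookrightarrow\mathcal{F}_n\xrightarrow{\ \pi\ }\bar{F}_n\subset(\C P^1)^N\xrightarrow{\ pr_\sigma\ }(\C P^1)^{N_\sigma}.
\]

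The substance of the argument is to check that this composition lands in $F_\sigma$ and is onto. The virtual space $\tilde{F}_{\sigma,ij}$ is, by its construction through~\eqref{orbit}, built so that on the surviving pairs its coordinates are exactly the parameters describing $F_\sigma$, while the remaining pairs are either pinned to $(1:0)$ or $(0:1)$ or lie along the exceptional directions introduced by the blow-ups; this is precisely the mechanism illustrated in Remark~\ref{revir}, where the virtual space acquires the extra factor $\C P^1\times(1:0)^{2n-8}$. Applying $\pi$ collapses the exceptional directions onto the relevant centre, and $pr_\sigma$ then discards the pinned factors, so the output is the tuple of surviving parameters; since the relations~\eqref{relat} hold identically on $\bar{F}_n$, their restriction to the surviving index set holds on the image, which therefore lies in $F_\sigma$. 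Continuity is immediate as a composition of continuous maps. For surjectivity I would note that $pr_\sigma\circ\pi$ exhibits $\tilde{F}_{\sigma,ij}$ as fibred over $F_\sigma$ along the forgotten (exceptional and pinned) directions, whose fibres are nonempty, so every point of $F_\sigma$ is attained.

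It remains to make the construction canonical. For $W_\sigma\subset M_{ij}$ the map is the one above; for $W_\sigma\cap M_{ij}=\emptyset$ one builds the map in an auxiliary chart $M_{kl}\supset W_\sigma$ and transports it by the homeomorphism of $\mathcal{F}_n$ furnished by Theorem~\ref{universal}, Proposition~9.11 of~\cite{BT-2} guaranteeing that the resulting object is independent of the auxiliary choice; the argument that established this independence for $G_{5,2}$ in~\cite{BT-2}, paragraph~9.2, applies verbatim. The main obstacle I anticipate is exactly the well-definedness on the exceptional loci: one must verify that $\pi$ sends the extra directions of $\tilde{F}_{\sigma,ij}$---the coordinates along the exceptional divisors $\bar{F}_I$ and the pinned factors---into the hyperplanes that $pr_\sigma$ discards, so that passing to the limit along a blow-up never violates~\eqref{relat}. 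Verifying this amounts to tracking the orbit equations~\eqref{orbit} through each blow-up of the wonderful compactification, the computation already carried out for the illustrative strata in Remark~\ref{revir}.
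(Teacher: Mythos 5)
Your construction --- the blow-down $\pi:\mathcal{F}_n\to\bar{F}_n$ of the wonderful compactification followed by the coordinate projection onto the factors that actually carry the moduli of $F_\sigma$, with chart-independence supplied by Theorem~\ref{universal} and Proposition~9.11 of~\cite{BT-2} --- is essentially the map the paper has in mind: the paper's own ``proof'' is a one-line deferral to the case-by-case construction for $G_{5,2}$ in~\cite{BT-2}, paragraph~9.2, and your formula $g_{\sigma,ij}=pr_\sigma\circ\pi|_{\tilde{F}_{\sigma,ij}}$ is the uniform reformulation of exactly that construction (collapse exceptional directions, forget free and pinned coordinates). The one verification you flag as the remaining obstacle --- that the determined coordinates of points of $\tilde{F}_{\sigma,ij}$, after blow-down, land in $F_\sigma$ itself rather than merely in its closure --- is likewise left implicit by the paper, so your proposal is, if anything, more explicit than the original.
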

\begin{rem}
From  now on we will omit in the notation of the virtual spaces of parameters the indices of the  charts and write jut $\tilde{F}_{\sigma}$.
\end{rem}

Altogether,  the proof of Theorem 11.11 for $G_{5,2}$ directly generalizes to   $G_{n,2}$ for $n\geq 6$, which verifies that the third condition of Axiom 6 for $(2n,k)$-manifolds from~\cite{BT-2nk}  is satisfied. In this way we obtain:

\begin{thm}
The  space  $\mathcal{F}_{n}$ is the universal space of parameters for $G_{n,2}$.  
\end{thm}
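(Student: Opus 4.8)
The plan is to verify that $\mathcal{F}_n$ satisfies each of the defining conditions of a universal space of parameters as axiomatized in Axiom 6 of~\cite{BT-2nk}, assembling the ingredients established in this section and reducing the single nontrivial point to the argument already carried out for $G_{5,2}$. The requirement that $\mathcal{F}_n$ be a compactification of the main parameter space $F_n$ is supplied by the construction together with Theorem~\ref{universal}: the wonderful compactification produces the smooth compact $\mathcal{F}_n \supset \bar{F}_n$, and Theorem~\ref{universal} guarantees that every transition homeomorphism $f_{ij,kl}$ between the two records of $F_n$ extends to a homeomorphism of $\mathcal{F}_n$, so that the compactification is intrinsic and chart-independent. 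The covering condition $\bigcup_\sigma \tilde{F}_{\sigma,ij} = \mathcal{F}_n$ is precisely~\eqref{union} and holds in every chart; the canonical projections $g_{\sigma,ij} : \tilde{F}_{\sigma,ij} \to F_\sigma$ are the content of the Proposition preceding this theorem; and the chart-independence of the assignment $W_\sigma \mapsto \tilde{F}_{\sigma,ij}$ is secured by Proposition 9.11 of~\cite{BT-2}. Remark~\ref{revir} pins down exactly the strata on which $g_\sigma$ is a homeomorphism and those on which $\tilde{F}_\sigma$ is strictly larger than $F_\sigma$, which is the information needed to control the fibers of the projections.

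The one remaining point is the third condition of Axiom 6, which is where the real work lies. For $G_{5,2}$ this is the statement established as Theorem 11.11: there the compactification is a single blow-up of $\bar{F}_5$ at the point $S = ((1:1),(1:1),(1:1))$, and one checks directly that each $\tilde{F}_\sigma$ sits inside $\mathcal{F}_5$ compatibly with the projections $g_\sigma$ and with the stratification, the exceptional $\C P^1$ realizing the virtual space of the stratum $P^{34}=P^{35}=P^{45}=0$. I would reproduce this argument in the same structure for general $n$: the generating subvarieties $\bar{F}_I$ of Theorem~\ref{universal}, indexed by collections of triples $ikl$ with $3 \leq i < k < l \leq n$, now play the role that the single point $S$ played for $n=5$, and the exceptional divisors introduced along them in the wonderful compactification realize the virtual spaces of the corresponding strata. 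The verification then proceeds stratum by stratum, using the local description~\eqref{orbit} in a fixed chart to identify $\tilde{F}_{\sigma,ij}$ with the appropriate part of $\mathcal{F}_n$, and invoking the chart-independence above to glue.

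The main obstacle will be the combinatorial bookkeeping that replaces the single blow-up of the $n=5$ case. For $n \geq 6$ the wonderful compactification is an iterated blow-up along a nested family of the subvarieties $\bar{F}_I$, and one must check that the exceptional structure introduced at each stage is compatible with the virtual spaces $\tilde{F}_\sigma$ and with the extended transition homeomorphisms of Theorem~\ref{universal}, uniformly across all charts. Concretely, the point to verify is that for every stratum the locally defined $\tilde{F}_{\sigma,ij}$ coming from~\eqref{orbit} agrees, under the blow-down $\mathcal{F}_n \to \bar{F}_n$, with the closure prescribed by the relations~\eqref{relat}, and that the projections $g_\sigma$ assemble coherently over the chamber decomposition. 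Since the relations~\eqref{relat}, the local model~\eqref{orbit}, and the generating subvarieties $\bar{F}_I$ all have the same shape for every $n$, the verification carried out for $G_{5,2}$ applies line by line once the indices are allowed to range over $3 \leq i < k < l \leq n$, and the $S_n$-symmetry reduces the general stratum to one of the generating strata already treated. With this in place all conditions of Axiom 6 hold, and $\mathcal{F}_n$ is the universal space of parameters.
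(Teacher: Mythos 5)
Your proposal follows essentially the same route as the paper: the paper likewise assembles the compactification property (Theorem~\ref{universal}), the covering property~\eqref{union}, the canonical projections $g_{\sigma,ij}$, and chart-independence via Proposition 9.11 of~\cite{BT-2}, and then disposes of the remaining third condition of Axiom 6 of~\cite{BT-2nk} by asserting that the proof of Theorem 11.11 for $G_{5,2}$ generalizes directly to $n\geq 6$. Your additional discussion of how the generating subvarieties $\bar{F}_I$ replace the single blow-up point $S$ of the $n=5$ case fills in the same generalization the paper invokes, so the two arguments coincide in substance.
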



\begin{ex}
 For $n=5$ the universal space of parameters  $\mathcal{F}_{5}$  is obtained as  the blow up of the smooth algebraic manifold $\bar{F}_{5}=\{((c_{34}:c_{34}^{'}), (c_{35}:c_{35}^{'}), (c_{45}:c_{45}^{'}))\in (\C P^{1})^{3} | c_{34}^{'}c_{35}c_{45}^{'} = c_{34}c_{35}^{'}c_{45}\}$ at the point $((1:1), (1:1), (1:1))$. This result is obtained in~\cite{BT-2}, see also~\cite{BT-U} for more general insight.
\end{ex}

\begin{ex}
For $n=6$ the universal space of parameters $\mathcal{F}_{6}$ is obtained in~\cite{BT-U}  by the wonderful compactification of the algebraic manifold  $\bar{F}_{6}\subset \in (\C P^{1})^{6}$ defined by 
\[c_{34}^{'}c_{35}c_{45}^{'} = c_{34}c_{35}^{'}c_{45},\;  c_{34}^{'}c_{36}c_{46}^{'} = c_{34}c_{36}^{'}c_{46},
\]
\[  c_{35}^{'}c_{36}c_{56}^{'} = c_{35}c_{36}^{'}c_{56},\;  c_{45}^{'}c_{46}c_{56}^{'} = c_{45}c_{46}^{'}c_{56},
\]
with generating subvarieties 
\[
\bar{F}_{345}=\bar{F} \cap \{ (c_{34} : c_{34}^{'}) = (c_{35} : c_{35}^{'}) = (c_{45}: c_{45}^{'})=(1:1)\}
\]
\[
\bar{F}_{346}=\bar{F} \cap \{(c_{34}:c_{34}^{'})= (c_{36}: c_{36}^{'}) = (c_{46}:c_{46}^{'})= (1:1)\}
\]
\[
\bar{F}_{356}=\bar{F} \cap \{(c_{35}:c_{35}^{'}) =  (c_{36}:c_{36}^{'})= (c_{46}: c_{46}^{'})= (1:1))\}
 \]
\[
\bar{F}_{456} = \bar{F}\cap \{(c_{45}:c_{45}^{'}) = (c_{46}:c_{46}^{'})= (c_{56}:c_{56}^{'})= (1:1)\}.
\]
\end{ex}

\subsection{Critical and singular points for $G_{n,2}/T^n$}

The  notion of a critical point on $G_{n,2}/T^n$ can be defined in one of the equivalent ways. Consider a moment map $\mu _{n,2} : G_{n,2} \to \Delta _{n,2}$. It is a smooth map and there are, in the standard way, defined the critical points and the critical values of this map. It is proved in~\cite{BT-2nk} that a point $L\in G_{n,2}$ is a critical point of the moment map $\mu _{n,2}$ if and only if the stabilizer of $L$ related to the standard $T^n$-action on $G_{n,2}$ is non-trivial. This is equivalent to say that  the admissible polytope of the stratum which contains $L$ is not of maximal dimension $n-1$.

We say that a point $[L]\in G_{n,2}/T^n$ is a critical point if $L\in G_{n,2}$ is a critical point in the above sense.  This is correctly defined since obviously the notion of a  critical point on $G_{n,2}$  is invariant for $T^n$-action. 

The notion of the critical points in $G_{n,2}/T^n$ can be as well related to the  singularities  of $G_{n,2}/T^n$  following tubular neighborhood theorem. Precisely, the tubular neighborhood theorem states  that for any point $L\in G_{n,2}$  there exists $T^n$-equivariant diffeomorphism between the vector bundle $T^n \times _{T_{L}}V$ and the neighborhood of the orbit $T^n\cdot L$ in $G_{n,2}$, where $T_{L}$ is the stabilizer of the point $L$ and $V$ is the normal bundle in $(TG_{n,2})_{T^n\cdot L}$ to the tangent bundle $T (T^n\cdot L)$.  It implies that there exists a neighborhood   in the orbit space $G_{n,2}/T^n$ of the point defined by the orbit $T^n\cdot L$  which has the form $(T^n\times _{T_{L}}V)/T^n = V^{T_{L}}\times \text{cone}(S(U)/T_{L})$, where $V^{T_{L}}$ is the subspace of $V$ consisting of the vectors fixed by $T_{L}$, $U$ is subspace of $V$ defined by $V = V^{T_{L}}\oplus U$ related to some $T^n$- invariant metric on $G_{n,2}$ and $S(U)$ is the corresponding unit sphere.  In this way one concludes that any  point in $G_{n,2}/T^n$ which is  defined by a point from $G_{n,2}$  having  the non-trivial  stabilizer,  has a neighborhood with cone-like singularities. In this way all singularities of the orbit space 
$G_{4,2}/T^4\cong S^5$ are described in~\cite{BT-1}.

On the other hand to any stratum $W_{\sigma}\subset G_{n,2}$ we assigned the corresponding space of parameters $F_{\sigma}$ and the virtual space of parameters $\tilde{F}_{\sigma}$.  As we noted in Remark~\ref{revir} these spaces are in general not homeomorphic.  We say that a point $L \in G_{n,2}$ is a singular point for $T^n$-action on $G_{n,2}$ if the space of parameters of the stratum $W_{\sigma}$ such that $L\in W_{\sigma}$ is not homeomorphic to the virtual space of parameters for $W_{\sigma}$.  Since the  notions of spaces of parameters and virtual spaces of parameters are obviously invariant for the standard $T^n$-action we can define  the notion of a singular point in the orbit space $G_{n,2}/T^n$.

\begin{defn}
A point $[L]= T^n \cdot L\in G_{n,2}/T^n$ is said to be a singular point for the standard $T^n$-action on $G_{n,2}$ if  the space of parameters $F_{\sigma}$ of the stratum $W_{\sigma}$, $L\in W_{\sigma}$ is not homeomorphic to the virtual space parameters $\tilde{F}_{\sigma}$ for $W_{\sigma}$.
\end{defn}

The singular points can be characterized in terms of the Pl\"ucker coordinates as follows.

\begin{prop}\label{crpl}
A point $[L]\in G_{n,2}/T^n$ is a singular point if and only if there exists $i$, $1\leq i\leq n$  such that $P^{ij}(L)=0$ for all $j\neq i$, $1\leq j\leq n$ or there exist $1\leq i<j<k\leq n$ such that $P^{ij}(L)=P^{ik}(L) = P^{jk}(L)=0$.
\end{prop}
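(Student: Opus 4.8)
The plan is to connect the abstract condition defining singular points---namely that the space of parameters $F_\sigma$ is not homeomorphic to the virtual space of parameters $\tilde F_\sigma$---to concrete vanishing patterns of Pl\"ucker coordinates, by exploiting the local description of strata via equations~\eqref{orbit} in a Pl\"ucker chart. The key observation recorded in Remark~\ref{revir} is that $\tilde F_\sigma \cong F_\sigma$ precisely when the stratum $W_\sigma$ has at most two vanishing Pl\"ucker coordinates, and that when there are exactly two vanishing coordinates they must be of the disjoint form $P^{ij}$ and $P^{kl}$ with $\{i,j\}\cap\{k,l\}=\emptyset$. So the proof amounts to showing that the two Pl\"ucker patterns named in the statement are exactly the minimal ``bad'' configurations, and that every stratum failing the homeomorphism condition contains one of them.

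First I would set up the correspondence explicitly. Working in a chart $M_{pq}$ containing $W_\sigma$ with local coordinates $(z_3,\dots,z_n,w_3,\dots,w_n)$, a stratum is cut out by conditions $z_s=0$, $w_m=0$, together with the quadratic relations $z_iw_j=z_jw_i$ coming from~\eqref{orbit}. I would translate the two combinatorial conditions of the statement into this local language: the condition ``there exists $i$ with $P^{ij}(L)=0$ for all $j$'' says that an entire row/column of Pl\"ucker coordinates vanishes, which forces a coordinate vector to vanish identically and collapses the stratum's free-torus dimension, so that the virtual space $\tilde F_\sigma$ (read off from~\eqref{orbit} in an adapted chart) is genuinely larger than the actual quotient $F_\sigma$. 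The condition ``there exist $i<j<k$ with $P^{ij}=P^{ik}=P^{jk}=0$'' is exactly the triangle pattern occurring in the worked example of Remark~\ref{revir} (the stratum $P^{13}=P^{14}=P^{34}=0$), where passing to an adapted chart shows $\tilde F_\sigma$ picks up an extra $\C P^1$ factor, hence $\tilde F_\sigma\not\cong F_\sigma$.

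Next I would establish the converse---that these are the \emph{only} sources of non-homeomorphism---by a dichotomy on the vanishing set of Pl\"ucker coordinates. If neither pattern occurs, then every index $i$ has some $j$ with $P^{ij}(L)\neq 0$, and no three indices are pairwise ``unconnected''; I would argue that under these hypotheses the zero set of Pl\"ucker coordinates corresponds (after reindexing via $S_n$ and choosing an appropriate chart) to at most two vanishing coordinates of the disjoint type $P^{ij}=P^{kl}=0$, which by Remark~\ref{revir} gives $\tilde F_\sigma\cong F_\sigma$. The combinatorial heart is a graph-theoretic reformulation: encode the vanishing coordinates as non-edges of a graph on vertices $\{1,\dots,n\}$; the first forbidden pattern is an isolated vertex (in the complementary ``support'' graph) and the second is a triangle of non-edges, and I would show that a stratum's support graph avoiding both corresponds to the homeomorphism-preserving cases.

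The main obstacle I anticipate is the converse direction, specifically controlling how the wonderful-compactification construction of $\tilde F_\sigma$ interacts with the chart changes. The subtlety is that $F_\sigma$ is computed intrinsically while $\tilde F_\sigma$ is defined through~\eqref{orbit} in a fixed chart and then transported by the homeomorphisms of Theorem~\ref{universal}; I must verify that when the forbidden patterns are absent, the restriction of the cubic relations~\eqref{relat} to the surviving $\C P^1_B$ factors yields a variety that actually coincides with (not merely surjects onto, via the canonical projection $g_{\sigma,ij}$) the real space of parameters. Here I would lean on the analysis preceding Proposition~\ref{point} identifying which restricted intersections of the hypersurfaces~\eqref{relat} are involved, and on the explicit count that in the two-disjoint-zeros case the extra factors are of the form $(1:0)$ rather than full $\C P^1$'s, so no blow-up phenomenon inflates $\tilde F_\sigma$ beyond $F_\sigma$.
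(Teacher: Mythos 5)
Your first half (each of the two Pl\"ucker patterns forces $F_\sigma\not\cong\tilde F_\sigma$) is sound and is essentially the paper's own computation: relabeling a triangle as $\{1,j,k\}$ with $j,k\geq 3$ and taking a chart $M_{12}$ containing $W_\sigma$, the conditions $P^{1j}=P^{1k}=0$ give $w_j=w_k=0$, so both $z_jw_k$ and $z_kw_j$ vanish identically on the stratum,~\eqref{orbit} leaves $(c_{jk}:c_{jk}^{'})$ completely free, and $\tilde F_\sigma$ acquires a $\C P^1$ factor that $F_\sigma$ does not have; an isolated vertex does the same for every pair through that vertex.

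The genuine gap is in your converse. Your reduction ``no isolated vertex and no triangle $\Rightarrow$ (up to $S_n$) at most two vanishing Pl\"ucker coordinates, of disjoint type'' is false. What the Pl\"ucker relation $P^{ij}P^{kl}-P^{ik}P^{jl}+P^{il}P^{jk}=0$ actually forces is only this: if two vanishing pairs share an index, then the pattern contains a triangle or an isolated vertex. Hence avoiding both patterns means the vanishing set is a partial \emph{matching} (pairwise disjoint pairs) of arbitrary size, not of size at most two. Concretely, in $G_{6,2}$ the stratum with exactly $P^{12}=P^{34}=P^{56}=0$ is realizable --- take the rows $(1,0),(z_2,0),(0,1),(0,w_4),(z_5,w_5),(\lambda z_5,\lambda w_5)$ in the chart $M_{13}$ with all parameters nonzero --- it contains neither forbidden pattern yet has three vanishing Pl\"ucker coordinates, so your argument never addresses it. Nor can you patch this by reading Remark~\ref{revir} as an equivalence (``precisely when''), as you do: its final blanket claim, taken literally, would declare this stratum singular and thereby contradict the very proposition you are proving. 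In fact a direct check with~\eqref{orbit} shows that for this stratum every $(c_{ij}:c_{ij}^{'})$ is pinned to $(1:0)$, $(0:1)$ or $(1:1)$, so $\tilde F_\sigma$ and $F_\sigma$ are both points and the stratum is \emph{not} singular --- but that is a computation, not a citation. The paper avoids this trap by never counting vanishing Pl\"ucker coordinates: when $P_\sigma\subset\partial\Delta_{n,2}$ one of the two patterns holds automatically by Lemma~\ref{polytopbound}, and when $P_\sigma$ meets $\stackrel{\circ}{\Delta}_{n,2}$ it works in the chart $M_{12}$ and shows singularity is equivalent to the existence of two vanishing $z$'s or two vanishing $w$'s; single vanishing coordinates and proportional rows only pin parameters to $(0:1)$, $(1:0)$, $(1:1)$ and create no free $\C P^1$ factor. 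To complete your proof along your own lines you must replace the false reduction by a direct verification that every partial-matching stratum, of any size, satisfies $F_\sigma\cong\tilde F_\sigma$.
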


\begin{proof}
We first note that all points from  from $T^n\cdot L\subset G_{n,2}$ have the same Pl\"ucker coordinates. Let  $[L]$ is a singular point  in $G_{n,2}/T^n$ and let $L\in W_{\sigma}$. If  $P_{\sigma}\subset \partial \Delta _{n,2}$ then by  Lemma~\ref{polytopbound} we have that $P_{\sigma}$  belongs to the plane $x_i=0$ or to the plane $x_i=1$ for some $1\leq i\leq n$.  It implies that $P^{ij}(L)=0$ for all $j\neq i$ or $P^{jk}(L)=0$ for all $j,k\neq i$. Thus, the point $L$ satisfies the condition of the statement. If $P_{\sigma}\cap \stackrel{\circ}{\Delta}_{n,2}\neq \emptyset$,  assume that  the vertex $\Lambda _{12} \in P_{\sigma}$. Then $W_{\sigma}$ belongs to the chart $M_{12}$ and  let $z_3, \ldots, z_n, w_3, \ldots, w_n$ be the local  coordinates in this chart. Since $F_{\sigma}$ is not homeomorphic to  $\tilde{F}_{\sigma}$ and $P_{\sigma}\not \subset \partial \Delta _{n,2}$ there exist $3\leq i<j\leq n$ such that $z_i=z_j=0$ or $w_i=w_j=0$. It implies that $P^{2i}(L) = P^{2j}(l)=P^{ij}(L)=0$ or $P^{1i}(L)=P^{1j}(L)=P^{ij}(L)=0$,  that is the statement holds. 

In proving opposite direction we can assume as well that $W_{\sigma}\subset M_{12}$ and that $i=1$. Then $j, k\geq 3$ and in the local coordinates for $M_{12}$ we have that $w_j=w_k=0$. Using~\eqref{orbit} it implies that $F_{\sigma}$ and $\tilde{F}_{\sigma}$ are not homeomorphic, they differ at least by $\C P^{1}$.
\end{proof}    

\begin{prop}\label{crsing}
All critical points for $G_{n,2}/T^n$ are for $n\geq 5$ the singular points  for $G_{n,2}/T^n$.
\end{prop}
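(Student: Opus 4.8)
The plan is to reduce everything to the Plücker-coordinate criterion for singular points established in Proposition~\ref{crpl}: a point $[L]$ is singular precisely when either some index $i$ satisfies $P^{ij}(L)=0$ for all $j\neq i$, or there are three indices $i<j<k$ with $P^{ij}(L)=P^{ik}(L)=P^{jk}(L)=0$. It therefore suffices to take an arbitrary critical point $[L]$, that is one whose stratum $W_\sigma$ has admissible polytope $P_\sigma$ of non-maximal dimension $\dim P_\sigma\leq n-2$, and to verify that one of these two Plücker conditions always holds once $n\geq 5$. I would split the argument according to whether $P_\sigma$ lies in the boundary $\partial\Delta_{n,2}$ or meets its interior.

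First, the boundary case. Whenever $P_\sigma\subset\partial\Delta_{n,2}$ --- which, by the Proposition asserting that every admissible polytope of dimension $\leq n-3$ lies in the boundary, covers all critical strata except possibly those of dimension exactly $n-2$ that meet the interior --- Lemma~\ref{polytopbound} supplies an index $i$ with $x_i=0$ on all of $P_\sigma$ or $x_i=1$ on all of $P_\sigma$. In the first case the relation $x_i=0$ forces $P^{ij}(L)=0$ for every $j\neq i$, which is the first criterion. In the second case every non-zero Plücker coordinate must contain the index $i$, so $P^{jk}(L)=0$ for all $j,k\neq i$; since $n-1\geq 4$ there are at least three such indices, giving the second criterion. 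Hence every boundary critical point is singular, and this step already works for $n\geq 4$.

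The remaining, genuinely interesting case is a critical stratum whose polytope $P_\sigma$ meets $\stackrel{\circ}{\Delta}_{n,2}$; by the dimension Proposition just cited this forces $\dim P_\sigma=n-2$, so by Theorem~\ref{n-2} the polytope is cut out by a hyperplane $\sum_{i\in S}x_i=1$ with $2\leq\|S\|=p\leq[\frac{n}{2}]$. A vertex $\Lambda_{jk}$ lies on this hyperplane exactly when precisely one of $j,k$ belongs to $S$; consequently every Plücker coordinate $P^{jk}(L)$ with $\{j,k\}\subset S^c$ vanishes on $W_\sigma$. Since $\|S^c\|=n-p\geq\lceil\frac{n}{2}\rceil\geq 3$ for $n\geq 5$, I would choose three indices $j,k,l\in S^c$, and the three vanishing coordinates $P^{jk}(L)=P^{jl}(L)=P^{kl}(L)=0$ yield the second criterion of Proposition~\ref{crpl}.

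The main obstacle, and the source of the hypothesis $n\geq 5$, is exactly this last counting step: for $n=4$ the only interior $(n-2)$-polytope has $p=\|S\|=[\frac{n}{2}]=2$, so both $S$ and $S^c$ have only two elements and no three pairwise-vanishing Plücker coordinates can be found --- indeed such a critical point is genuinely not singular, which is why $n=4$ must be excepted. I would therefore be careful to invoke Theorem~\ref{n-2} to guarantee $p\leq[\frac{n}{2}]$, so that $\|S^c\|\geq\lceil\frac{n}{2}\rceil\geq 3$ is available for all $n\geq 5$, and to double-check that the non-zero Plücker coordinates of $W_\sigma$ are exactly those indexed by the vertices of $P_\sigma$, so that the claimed vanishing on $S^c$ is valid.
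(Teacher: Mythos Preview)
Your argument is correct, but it follows a different route from the paper's own proof. The paper works entirely in a fixed chart $M_{12}$ with local coordinates $z_3,\ldots,z_n,w_3,\ldots,w_n$ and appeals directly to the explicit form of the torus action~\eqref{action}: non-triviality of the stabilizer (for $n\geq 5$) forces either $z_i=z_j=0$ or $w_i=w_j=0$ for some $3\leq i<j\leq n$, or $z_i=w_i=0$ for some $i$; each of these translates immediately into one of the Pl\"ucker patterns of Proposition~\ref{crpl}. There is no case split on whether $P_\sigma$ lies on the boundary, and the classification of interior $(n-2)$-polytopes is never invoked.

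Your approach is more global and combinatorial: you separate the boundary case (handled by Lemma~\ref{polytopbound}) from the interior case, and for the latter you bring in Theorem~\ref{n-2} to identify the supporting hyperplane $\sum_{i\in S}x_i=1$ and then count $|S^c|\geq\lceil n/2\rceil\geq 3$. This is a heavier import --- Theorem~\ref{n-2} is a genuine structural result --- but it has the advantage of making the role of $n\geq 5$ completely transparent as a counting inequality, and it explains geometrically \emph{why} the exceptional $n=4$ strata escape singularity. The paper's chart argument is quicker and more self-contained, but the dependence on $n\geq 5$ is buried in the assertion that a nontrivial stabilizer forces two $z$'s or two $w$'s (or a matched pair) to vanish. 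Your concern in the last paragraph is unfounded: that the nonzero Pl\"ucker coordinates of $W_\sigma$ are exactly those indexed by $\sigma$, and that $P_\sigma$ is their convex hull, is the definition of the stratum, so $\Lambda_{jk}\notin P_\sigma$ immediately gives $P^{jk}(L)=0$.
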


\begin{proof}
Let $[L]$ be a critical point in $G_{n,2}/T^n$, assume that $L$ belongs to the chart $M_{12}$ and let $z_3, \ldots, z_n, w_3, \ldots ,w_n$ be the local coordinates  in this chart. Since the stabilizer for $L$ is non-trivial and $n\geq 5$, it follows from~\eqref{action}
that there must exist $3\leq i<j\leq n$ such that $z_i=z_j=0$ or $w_i=w_j=0$ or there must exist $3\leq i\leq n$ such that $z_i=w_i=0$. In other words there exist   $3\leq i<j\leq n$ such that $P^{2i}(L)=P^{2j}(L) = P^{ij}(L)=0$ or $P^{1i}(L)=P^{1j}(L) = P^{ij}(L)=0$ or there exists $3\leq i\leq n$ such that $P^{ij}(L)=0$ for any $j\neq i$. Then Proposition~\ref{crpl} implies that the point $[ L]$ is a singular point in $G_{n,2}/T^n$.
\end{proof}

\begin{rem}
Proposition~\ref{crsing} does not hold for $n=4$. In that case the points which in that chart $M_{12}$ have  coordinates $z_3, z_4=0, w_3=0, w_4$  or $z_3=0, z_4, w_3, w_4=0$  represent the points from $G_{4,2}/T^4$ which are  critical, but which are   not  singular points.  The points which have the local coordinates of this form in some chart exhaust all critical points in $G_{4,2}/T^4$ which are not singular.
\end{rem}
 
Let $\text{Sing}X_n$ denotes the set of singular point in $X_{n}$ and put $Y_{n}= X_{n}\setminus \text{Sing}X_n$.

\begin{prop}
The set $Y_{n}\subset X_n$ is a open, dense set in $X_n$, which is a manifold.
\end{prop}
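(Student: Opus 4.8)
The plan is to treat the three assertions—openness, density, and the manifold property—separately, relying on the Pl\"ucker-coordinate description of the singular set in Proposition~\ref{crpl} together with Proposition~\ref{crsing}. For \textbf{openness}, I would read Proposition~\ref{crpl} as exhibiting $\text{Sing}X_n$ as a finite union of closed sets: for each fixed $i$ the condition ``$P^{ij}(L)=0$ for all $j\neq i$'' cuts out the common zero locus of the Pl\"ucker coordinates $P^{ij}$, $j\neq i$, and for each triple $i<j<k$ the condition $P^{ij}(L)=P^{ik}(L)=P^{jk}(L)=0$ cuts out the common zero locus of three Pl\"ucker coordinates. Every such locus is closed in $G_{n,2}$ and $T^n$-invariant, hence descends under the orbit map $\pi:G_{n,2}\to X_n$ to a closed subset of $X_n$; a finite union of these is again closed, so $Y_n=X_n\setminus\text{Sing}X_n$ is open.

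For \textbf{density}, I would observe that every point of the main stratum $W$ has all Pl\"ucker coordinates nonzero and therefore violates both conditions of Proposition~\ref{crpl}, so $W/T^n=\pi(W)\subseteq Y_n$. Since $W$ is open and dense in $G_{n,2}$ and the orbit map $\pi$ is an open surjection, $\pi(W)$ is dense in $X_n$, and hence so is the larger set $Y_n$.

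The \textbf{manifold property} is the heart of the matter, and here Proposition~\ref{crsing} does the decisive work: for $n\geq 5$ it gives $\text{Crit}X_n\subseteq\text{Sing}X_n$, so on passing to complements $Y_n$ contains no critical points at all. Consequently every $L\in\pi^{-1}(Y_n)$ has trivial stabilizer for the effective $T^{n-1}=T^n/S^1$-action, so $T^{n-1}$ acts freely on the open, $T^n$-invariant submanifold $\pi^{-1}(Y_n)\subseteq G_{n,2}$. A free smooth action of a compact Lie group turns the orbit projection into a principal bundle, so $Y_n=\pi^{-1}(Y_n)/T^{n-1}$ inherits the structure of a smooth manifold, of dimension $(4n-8)-(n-1)=3n-7=\dim X_n$. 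The case $n=4$ must be handled apart, since (as already noted) $Y_4$ can contain critical points that are not singular and where the action fails to be free; there I would instead invoke the tubular-neighborhood description recalled above and check that the local model $V^{T_L}\times\text{cone}(S(U)/T_L)$ is Euclidean at such points, consistently with $X_4\cong S^5$.

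The step I expect to be the main obstacle is precisely the passage to freeness, namely the use of Proposition~\ref{crsing} to guarantee that no critical point survives in $Y_n$: once this is secured, the manifold conclusion reduces to the standard free-quotient theorem. The genuinely delicate bookkeeping therefore lies upstream, in the matching of the Pl\"ucker-vanishing conditions of Proposition~\ref{crpl} with the failure of $F_\sigma\cong\tilde{F}_\sigma$ recorded in Remark~\ref{revir}, and in isolating the exceptional behavior at $n=4$.
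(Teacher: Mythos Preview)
Your proposal is correct and reaches the same conclusions, but the argument for closedness of $\text{Sing}X_n$ takes a different route from the paper's. You invoke Proposition~\ref{crpl} directly, writing $\text{Sing}X_n$ as a finite union of $T^n$-saturated Pl\"ucker vanishing loci, which are manifestly closed and hence descend to closed sets in $X_n$. The paper instead argues structurally through the stratification: it uses that the boundary $\overline{W}_\sigma\setminus W_\sigma$ is a union of strata $W_{\sigma'}$ indexed by faces $P_{\sigma'}$ of $P_\sigma$, observes that passing to a face forces $F_{\sigma'}\subseteq F_\sigma$ while $\tilde{F}_\sigma\subseteq\tilde{F}_{\sigma'}$, and concludes that the failure $F_\sigma\not\cong\tilde{F}_\sigma$ propagates to every boundary stratum, so $\text{Sing}X_n$ is a union of closed sets $\overline{W}_\sigma/T^n$. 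Your approach is more elementary and self-contained once Proposition~\ref{crpl} is in hand; the paper's approach is more intrinsic to the definition of singularity and does not rely on the Pl\"ucker translation.

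For density and for the manifold property your argument coincides with the paper's: both use that the main stratum sits inside $Y_n$, and both deduce the manifold structure from the absence of critical points via Proposition~\ref{crsing} and the free-quotient theorem. You are in fact more careful than the paper in flagging that the last step, as written, only covers $n\geq 5$, and that $n=4$ requires a separate check (which is immediate since $X_4\cong S^5$ is already a manifold and $Y_4$ is open in it).
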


\begin{proof}
The boundary $\partial W_{\sigma} =  \overline{W}_{\sigma} \setminus W_{\sigma}$ is by~\cite{GS} the union of the strata $W_{\sigma}^{'}$ which correspond to the faces $P_{\sigma ^{'}}$ of the admissible polytope $P_{\sigma}$, for any stratum $W_{\sigma}$.  It implies  by~\ref{subarb} that $F_{\sigma ^{'}}\subseteq  F_{\sigma}$  and  by~\ref{submain} that $\tilde{F}_{\sigma}\subseteq \tilde{F}_{\sigma ^{'}}$,  for any such $\sigma ^{'}$. Therefore,  if a stratum $W_{\sigma}$  consists of singular points then $\overline{W}_{\sigma}$ consists of  singular points as well. It implies that $\text{Sing}X_{n} = \cup \overline{W}_{\sigma}$, where the union goes over all the strata consisting of singular points, so $X_n$ is  a closed set.  Thus,  $Y_{n}$ is an open, dense set in $X_n$, which is a manifold, as it contains the orbit space of the main stratum and it does not contain the critical points.  
\end{proof}

\section{The virtual spaces of parameters for $G_{n,2}$}
We establish now some properties of the virtual space of parameters for $G_{n,2}$ which turn out to be important for the description of the orbit space $G_{n,2}/T^n$.

Let $W_{\sigma}$ be a stratum such that it admissible polytope satisfies    $P_{\sigma}\cap \stackrel{\circ}{\Delta}_{n,2}\neq \emptyset$. Then $\dim P_{\sigma}= n-2$ or $\dim P_{\sigma}= n-1$. If $\dim P_{\sigma}=n-2$ it is proved in the previous sections that $W_{\sigma}$ is an one-orbit stratum while for $\dim P_{\sigma}=n-1$ it is not  in general.

\begin{thm}\label{univ-(n-2)}
If $P_{\sigma}$ is an admissible polytope such that $\dim P_{\sigma}= n-1$ and $P_{\sigma ^{'}}$ is a facet of $P_{\sigma}$ such that $P_{\sigma ^{'}}\cap \stackrel{\circ}{\Delta}_{n,2}\neq \emptyset$ then
\[
\tilde{F}_{\sigma}\subseteq \tilde{F_{\sigma ^{'}}},
\]
where $\tilde{F}_{\sigma}$ and $\tilde{F}_{\sigma ^{'}}$ are the virtual spaces of parameters for $W_{\sigma}$ and $W_{\sigma ^{'}}$.
\end{thm}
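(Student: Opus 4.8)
The plan is to work inside a single Pl\"ucker chart adapted to the facet and to read both virtual spaces off the parameter description~\eqref{orbit} of the main stratum. Since $P_{\sigma'}$ is an interior admissible polytope of dimension $n-2$, Theorem~\ref{n-2} places it on a hyperplane $\sum_{i\in S}x_i=1$ with $2\leq \|S\|\leq[\frac{n}{2}]$. Using the $S_n$-action I would arrange $1\in S$, $2\notin S$ and pass to the chart $M_{12}$, writing $S_1=S\cap\{3,\ldots,n\}$ and $\bar{S}_1=\{3,\ldots,n\}\setminus S_1$; here $P^{1j}=w_j$, $P^{2j}=-z_j$ and $P^{ij}=z_iw_j-z_jw_i$. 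Because the vertices $\Lambda_{ab}$ of $P_\sigma$ are exactly those with $\|\{a,b\}\cap S\|\leq 1$, the stratum $W_\sigma$ is cut out in $M_{12}$ by $w_b=0$ for $b\in S_1$.

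Next I would describe the passage to the facet. Using $x_i=\sum_j|P^{ij}|^2/\sum_{p<q}|P^{pq}|^2$, the equation $\sum_{i\in S}x_i=1$ is equivalent to the vanishing of all Pl\"ucker coordinates $P^{ab}$ with $a,b\notin S$, and a short computation identifies this with $z_j=0$ for every $j\in\bar{S}_1$. Hence $W_{\sigma'}$ is obtained from $W_\sigma$ by additionally imposing $z_j=0$ for $j\in\bar{S}_1$, so that $W_{\sigma'}\subset\overline{W}_\sigma$. If $P_\sigma$ has further interior facets, the coordinates they force to vanish are common to $W_\sigma$ and $W_{\sigma'}$ and enter both virtual spaces identically, so they play no role in the comparison below.

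I would then read the two virtual spaces off~\eqref{orbit} by tracking the parameters $(c_{ij}:c_{ij}')=(z_iw_j:z_jw_i)$. For both strata the $S_1$-pairs give $(0:0)$ and are resolved into free factors, while the crossing pairs with $i\in S_1$, $j\in\bar{S}_1$ are pinned to $(1:0)$ or $(0:1)$; thus the descriptions of $\tilde{F}_\sigma$ and $\tilde{F}_{\sigma'}$ coincide on all of these coordinates. The only difference is on the $\bar{S}_1$-pairs: for $\sigma$ all of $z_i,z_j,w_i,w_j$ are nonzero, so these are honest free parameters subject to the relations~\eqref{relat}, whereas for $\sigma'$ they become $(0:0)$ and are again resolved. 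In other words, passing to the facet turns the open $\bar{S}_1$-parameter locus of $\tilde{F}_\sigma$ into the fully resolved $\bar{S}_1$-locus of $\tilde{F}_{\sigma'}$.

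Finally I would close the argument. The resolved $\bar{S}_1$-locus of $\tilde{F}_{\sigma'}$ is, by the construction of $\mathcal{F}_n$ in Theorem~\ref{universal}, the wonderful compactification of exactly the honest $\bar{S}_1$-parameter space occurring in $\tilde{F}_\sigma$; since this compactification contains that parameter space as an open dense subset and both loci lie in the one ambient space $\mathcal{F}_n$, I obtain $\tilde{F}_\sigma\subseteq\tilde{F}_{\sigma'}$, with $\tilde{F}_{\sigma'}$ being a compactification of $\tilde{F}_\sigma$. I expect the genuine obstacle to be precisely this last step: verifying that the two loci, computed from~\eqref{orbit} with different vanishing patterns, embed compatibly into the fixed $\mathcal{F}_n$, so that the open honest-free locus literally sits inside the closed resolved locus. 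This is exactly the content secured by the chart-independence of the assignment $W_\sigma\mapsto\tilde{F}_\sigma$ furnished by Theorem~\ref{universal} together with the transition homeomorphisms, and I would invoke it rather than re-examine the blow-up structure coordinate by coordinate.
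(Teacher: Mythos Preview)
Your approach is essentially the same as the paper's: work in a Pl\"ucker chart adapted to the facet hyperplane, write $W_{\sigma'}$ and $W_\sigma$ in local coordinates, and compare the two virtual spaces factor by factor via the parameters $(c_{ij}:c_{ij}')=(z_iw_j:z_jw_i)$. The paper carries this out with explicit matrices and arrives at the same pinned factors $(1:0)$ on the crossing pairs and free $\mathbb{CP}^1$-factors on the remaining ones, exactly as you describe.

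There is one genuine omission. When you assert that ``the vertices $\Lambda_{ab}$ of $P_\sigma$ are exactly those with $\|\{a,b\}\cap S\|\leq 1$'' and hence that $W_\sigma$ is cut out by $w_b=0$ for $b\in S_1$, you are implicitly placing $P_\sigma$ on the side $\sum_{i\in S}x_i\leq 1$ of the facet hyperplane. The polytope $P_\sigma$ could equally well lie on the side $\sum_{i\in S}x_i\geq 1$, in which case $W_\sigma$ is cut out instead by $z_j=0$ for $j\in\bar S_1$ and the roles of the $S_1$- and $\bar S_1$-blocks in your comparison are exchanged. The paper treats these two cases separately. You can avoid the case split by observing that the hyperplane $\sum_{i\in S}x_i=1$ coincides with $\sum_{i\in S^c}x_i=1$, so you may first replace $S$ by $S^c$ if necessary to force $P_\sigma\subset\{\sum_{i\in S}x_i\leq 1\}$ (this keeps $2\leq\|S\|\leq n-2$), and only then use $S_n$ to arrange $1\in S$, $2\notin S$. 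With that normalization stated, your argument goes through and matches the paper's.
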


\begin{proof}
We first note that $W_{\sigma ^{'}}\subset W_{\sigma}$ since $W_{\sigma ^{'}}$ is an one-orbit stratum. Assume without loss of generality that $\Lambda _{12}$ is a common vertex for $P_{\sigma}$ and $P_{\sigma ^{'}}$. Let $P_{\sigma^{'}}$ be given by the intersection of $x_{i_1}+\ldots +x_{i_l} =1$ with $\Delta _{n,2}$, where $1\leq i_1<i_2\ldots <i_l\leq [\frac{n}{2}]$. Since $\Lambda _{12}$ is a vertex for $P_{\sigma ^{'}}$, because of the action of the symmetric group,  we can assume that this plane is given by $x_1+x_3+\ldots +x_l=1$. The stratum $W_{\sigma ^{'}}$ belongs to the chart $M_{12}$ and in this chart it writes as
\[
\left(
\begin{array}{cc}
1 & 0 \\
0 & 1 \\
a_3 & 0 \\
\vdots & \vdots \\
a_l & 0\\
0 & b_{l+1} \\
\vdots & \vdots \\
0 & b_n 
\end{array}
\right ),
\]
where $a_3,\ldots , a_l\neq 0$ and  $b_{l+1},\ldots , b_{n}\neq 0$ as  $\dim P_{\sigma}=n-2$. This is because in~\cite{BT-2}  we proved  that the dimension of the maximal subtorus which acts freely on $W_{\sigma ^{'}}$ is equal to the $\dim P_{\sigma}=n-2$.  It follows that
\[
\tilde{F}_{\sigma ^{'}} = (\C P^{1})^{n-3}\times (1:0)^{n-l}\times (\C P^{1})^{l-4}\times (1:0)^{n-l}\times \cdots
\]
\[\cdots  \times \C P^{1}\times (1:0)^{n-l}\times (1:0)^{n-l}\times (\C P^{1})^{\frac{(n-l)(n-l-1)}{2}},
\]
subject to the relations  $c_{ij}c_{ik}^{'}c_{jk} = c_{ij}^{'}c_{ik}c_{jk}^{'}$, where $1\leq i<j<k\leq n$ and $(c_{ij}:c_{ij}^{'})\in \C P^1$.

Since $P_{\sigma^{'}}$ is  a face of $P_{\sigma}$  then the points from $P_{\sigma}$ satisfy  either $x_1+ x_3+ \ldots +x_l\leq 1$ either $x_1+x_3+\ldots +x_l\geq 1$.  It follows that the stratum $W_{\sigma}$ belongs to the chart $M_{12}$ as well.

If it holds $x_1+x_3+\ldots +x_l\leq 1$ then the stratum $W_{\sigma}$ is given by
\[
\left(
\begin{array}{cc}
1 & 0 \\
0 & 1 \\
a_3 & 0 \\
a_4& \vdots \\
a_l & 0\\
a_{l+1} & b_{l+1} \\
\vdots & \vdots \\
a_n & b_n 
\end{array}
\right ),
\]
where $a_3, \ldots, a_l\neq 0$ and $b_{l+1}, \ldots, b_n\neq 0$ since $P_{\sigma ^{'}}$ is a facet of $P_{\sigma}$.  Moreover, some of $a_{l+1}, \ldots , a_{n}$ are  non-zero since $T^{n-1}$ acts freely on $W_{\sigma}$. Therefore,
\[
\tilde{F}_{\sigma} = (\C P^{1})^{l-3}\times (1:0)^{n-l}\times (\C P^{1})^{l-4}\times (1:0)^{n-l}\times \cdots
\]
\[\cdots  \times \C P^{1}\times (1:0)^{n-l}\times (1:0)^{n-l}\times \mathcal{D},
\]
where $\mathcal{D}\subset   (\C P^{1})^{\frac{(n-l)(n-l-1)}{2}}$  is  a proper subset,  
subject to the relations  $c_{ij}c_{ik}^{'}c_{jk} = c_{ij}^{'}c_{ik}c_{jk}^{'}$, where $1\leq i<j<k\leq n$. It follows that $\tilde{F}_{\sigma}\subset \tilde{F}_{\sigma ^{'}}$.

If it holds $x_1+x_3+\ldots +x_l\geq 1$ the stratum $W_{\sigma}$ is given by
\[
\left(
\begin{array}{cc}
1 & 0 \\
0 & 1 \\
a_3 & b_3 \\
a_4& \vdots \\
a_l & b_l\\
0 & b_{l+1} \\
\vdots & \vdots \\
0  & b_n 
\end{array}
\right ),
\]
where $a_3, \ldots, a_l\neq 0$ and $b_{l+1}, \ldots, b_n\neq 0$ since $P_{\sigma ^{'}}$ is a facet of $P_{\sigma}$.  Also some of  $b_{3}, \ldots, b_l$  are  non-zero,  since  $T^{n-1}$ acts freely on $W_{\sigma}$.  Therefore,
\[
\tilde{F}_{\sigma} =  \mathcal{D}_1\times (1:0)^{n-l}\times \mathcal{D}_2\times (1:0)^{n-l}\times \cdots
\]
\[\cdots  \times \mathcal{D}_{l-3}\times (1:0)^{n-l}\times (1:0)^{n-l}\times (\C P^{1})^{\frac{(n-l)(n-l-1)}{2}},
\]
where $\mathcal{D}_1\subset (\C P^{1})^{l-3}$, $\mathcal{D}_2\subset (\C P^{1})^{l-4}$, ..., $\mathcal{D}_{l-3}\subset \C P^1$ and  $\mathcal{D}_{1}$  is a proper subset.
It follows that $\tilde{F}_{\sigma}\subset \tilde{F}_{\sigma ^{'}}$.
\end{proof}

From the above proof we directly deduce:

\begin{cor}
Let $P_{\sigma ^{'}}$ be an admissible polytope such that $\dim P_{\sigma ^{'}}=n-2$ and $P_{\sigma ^{'}}\cap \stackrel{\circ}{\Delta}_{n,2}\neq \emptyset$. Then
\begin{equation}
\tilde{F}_{\sigma ^{'}} = \bigcup\limits_{\sigma} \tilde{F}_{\sigma},
\end{equation}
where the union goes over all $\sigma$ such that $P_{\sigma ^{'}}$ is a facet of $P_{\sigma}$.
\end{cor}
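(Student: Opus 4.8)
The plan is to prove the two inclusions separately. The inclusion $\bigcup_{\sigma}\tilde{F}_{\sigma}\subseteq \tilde{F}_{\sigma '}$ is immediate: for every $\sigma$ occurring in the union the polytope $P_{\sigma '}$ is, by hypothesis, a facet of $P_{\sigma}$, so Theorem~\ref{univ-(n-2)} gives $\tilde{F}_{\sigma}\subseteq \tilde{F}_{\sigma '}$ at once. The whole content of the statement therefore lies in the reverse inclusion $\tilde{F}_{\sigma '}\subseteq \bigcup_{\sigma}\tilde{F}_{\sigma}$.

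For the reverse inclusion I would exploit the explicit coordinate description obtained in the proof of Theorem~\ref{univ-(n-2)}. Normalising by $S_n$ we may assume $\Lambda _{12}\in P_{\sigma '}$ and that $P_{\sigma '}$ is cut out by $x_1+x_3+\cdots +x_l=1$; write $A=\{3,\ldots ,l\}$ and $B=\{l+1,\ldots ,n\}$. In the chart $M_{12}$ the stratum $W_{\sigma '}$ has $z_i\neq 0,\ w_i=0$ for $i\in A$ and $z_j=0,\ w_j\neq 0$ for $j\in B$, so by~\eqref{orbit} the coordinate $(c_{ij}:c_{ij}')$ of a point of $\tilde{F}_{\sigma '}$ is a free point of $\C P^1$ when $i,j\in A$ or $i,j\in B$ (both monomials $z_iw_j,\ z_jw_i$ vanish) and is fixed at $(1:0)$ when $i\in A,\ j\in B$. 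The observation I would record is that the cubic relations~\eqref{relat} mixing an $A$-index and a $B$-index are automatically satisfied: any triple containing indices from both blocks carries a factor $(1:0)$ in each monomial, so the relation degenerates to $0=0$. Only the relations internal to $A$ and internal to $B$ survive, and $\tilde{F}_{\sigma '}$ splits as a product
\[
\tilde{F}_{\sigma '}\;\cong\;\mathcal{F}_{A}\times \mathcal{F}_{B},
\]
where $\mathcal{F}_A$, $\mathcal{F}_B$ are the universal spaces of parameters of the sub-Grassmannians on the indices $A$ and $B$, that is of $G_{l,2}$ and $G_{n-l+2,2}$; here one also checks that the generating subvarieties $\bar{F}_{ikl}$ of the wonderful compactification attached to mixed triples meet $\tilde{F}_{\sigma '}$ emptily (they would force $(1:0)=(1:1)$), so the blow-up structure respects the product.

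With the product structure in hand I would cover $\tilde{F}_{\sigma '}$ using only the strata on the half-space side $\sum_{s\in S}x_s\le 1$. For each such $\sigma$ the $A$-pattern is unchanged ($w_i=0$ for $i\in A$ throughout this half-space), so the $\mathcal{F}_A$-factor of $\tilde{F}_{\sigma}$ is full, while the $B$-coordinates $z_j$ are switched on; varying $\sigma$ over this side is exactly varying the stratum of $G_{n-l+2,2}$ carried by the indices $B$, and each such $\sigma$ genuinely has $P_{\sigma '}$ as a facet because the additional admissible half-spaces $\sum_{s\in S'}x_s\le 1$ with $S'\subseteq\{2\}\cup B$ are automatically satisfied on $P_{\sigma '}$ and hence do not cut it. The induced virtual spaces are precisely the products $\mathcal{F}_A\times \tilde{F}_{\tau}$ with $\tau$ ranging over all strata of $G_{n-l+2,2}$, so applying the identity~\eqref{union} to this smaller Grassmannian gives
\[
\bigcup_{\sigma}\tilde{F}_{\sigma}\;\supseteq\;\mathcal{F}_A\times\Big(\bigcup_{\tau}\tilde{F}_{\tau}\Big)\;=\;\mathcal{F}_A\times\mathcal{F}_B\;=\;\tilde{F}_{\sigma '},
\]
finishing the reverse inclusion (the strata on the opposite side $\sum_{s\in S}x_s\ge 1$ only add redundant terms).

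The step I expect to be the main obstacle is this second one. The explicit formulas in Theorem~\ref{univ-(n-2)} show that each individual $\tilde{F}_{\sigma}$ is only a \emph{proper} subset of $\tilde{F}_{\sigma '}$ (the proper subsets $\mathcal{D}$, $\mathcal{D}_1$ there), and a naive attempt using just the two maximal strata on the two sides must fail, since a product of two proper subsets can never cover the whole product. The essential point is that one has to use the \emph{entire} half-space family and recognise it, via the vanishing of the mixed relations, as the full stratification of the lower Grassmannian $G_{n-l+2,2}$, so that the covering follows by induction from~\eqref{union}. Verifying cleanly that the restriction of the wonderful compactification to $\tilde{F}_{\sigma '}$ is genuinely the product $\mathcal{F}_A\times\mathcal{F}_B$, and not merely a fibration over one factor, is the delicate technical core.
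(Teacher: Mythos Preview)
The paper's own argument here is the single sentence ``From the above proof we directly deduce'', so your write-up is already considerably more explicit than what the paper offers; the product splitting $\tilde F_{\sigma'}\cong\mathcal F_A\times\mathcal F_B$ is a clean way to organise the explicit formulas appearing in the proof of Theorem~\ref{univ-(n-2)}, and the observation that all mixed cubic relations degenerate to $0=0$ is exactly what makes that splitting work.

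There is, however, a genuine gap in your step~3. When you restrict to the half-space $x_1+x_3+\cdots+x_l\le 1$, every stratum $W_\sigma$ with $P_{\sigma'}$ as a facet has, in the chart $M_{12}$, all $b_j=w_j\neq 0$ for $j\in B$ (this is imposed precisely so that the facet condition holds, as the proof of Theorem~\ref{univ-(n-2)} makes explicit). Translating to the sub-Grassmannian on $\{1,2\}\cup B$, this forces $P^{1j}\neq 0$ for every $j\in B$: the $\tau$'s that actually arise are only those whose admissible polytopes contain the simplex face $\{x_1=1\}$ and have maximal dimension. This is a proper subfamily of the strata of $G_{n-l+2,2}$, so the identity~\eqref{union}, which is stated for the union over \emph{all} strata, cannot be invoked directly. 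Concretely, already for $n=5$, $l=3$ the sub-Grassmannian is $G_{4,2}$ and the sub-stratum with $z_4=z_5=0$ is excluded (it corresponds in $G_{5,2}$ to $W_{\sigma'}$ itself, not to a higher-dimensional stratum having $P_{\sigma'}$ as a facet).

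The conclusion you want---that the virtual spaces of the max-dimensional strata containing the face $\{x_1=1\}$ already cover $\mathcal F_{n-l+2}$---is true, but it needs its own argument; it is essentially the statement $\tilde F_x=\mathcal F$ for a point $x$ in a chamber adjacent to that boundary simplex, which is closer in spirit to Theorem~5 than to~\eqref{union}. You could either prove this auxiliary covering lemma separately, or set up the induction more carefully so that the inductive hypothesis is this restricted covering rather than~\eqref{union}.
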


In an analogous way  the same statement can be proved for an admissible polytope $P_{\sigma ^{'}}$ such that $\dim P_{\sigma ^{'}} = n-2$ and $P_{\sigma ^{'}}\subset \partial \Delta _{n,2}$, as any such polytope is defined by an additional condition $x_i=0$ or $x_i=1$ for some $1\leq i\leq n$.  Altogether we have:

\begin{prop}
  Let $P_{\sigma ^{'}}$ be an admissible polytope such that $\dim P_{\sigma ^{'}}=n-2$. Then 
\begin{equation}
\tilde{F}_{\sigma ^{'}} = \bigcup\limits_{\sigma} \tilde{F}_{\sigma},
\end{equation}
where the union goes over all $\sigma$ such that $P_{\sigma ^{'}}$ is a facet of $P_{\sigma}$.
\end{prop}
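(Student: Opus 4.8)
The plan is to separate according to the position of $P_{\sigma^{'}}$ relative to $\Delta_{n,2}$ and to treat the only genuinely new situation, the boundary one, by transporting the chart computation behind Theorem~\ref{univ-(n-2)}. By the classification of the $(n-2)$-dimensional admissible polytopes (Theorem~\ref{n-2} together with the opening discussion of \S\ref{adm-n-2}), each such $P_{\sigma^{'}}$ either meets $\stackrel{\circ}{\Delta}_{n,2}$ or lies in $\partial\Delta_{n,2}$. In the first case the assertion is verbatim the preceding Corollary, so the whole content of the statement is the boundary case; I would organize the argument as a check that the computation establishing that Corollary is robust under adjoining the defining condition of a boundary facet.

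First I would normalize. By Lemma~\ref{polytopbound} a boundary polytope of dimension $n-2$ is cut out inside $\Delta_{n,2}$ by a single equation $x_i=0$ or $x_i=1$. Choosing a vertex $\Lambda_{ab}$ of $P_{\sigma^{'}}$ and using the $S_n$-action (Corollary~\ref{cormain}) to place $W_{\sigma^{'}}$ in the chart $M_{ab}$, the facet condition translates, in the local coordinates of $M_{ab}$, into extra forced vanishings: the condition $x_i=0$ forces the coordinates carrying the index $i$ to be zero, i.e. $P^{ij}(L)=0$ for all $j$, while the condition $x_i=1$ requires $i$ to occur in every nonzero Pl\"ucker coordinate, confining the stratum to the toric locus $\C P^{n-2}$ where the space of parameters is a point. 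In either case $W_{\sigma^{'}}$ is obtained from the interior situation of Theorem~\ref{univ-(n-2)} merely by imposing additional forced zeros.

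With this normalization I would rerun the explicit descriptions of $\tilde{F}_{\sigma^{'}}$ and of $\tilde{F}_{\sigma}$ through the orbit equations~\eqref{orbit}, exactly as in the proof of Theorem~\ref{univ-(n-2)}. The extra forced zeros simply pin the corresponding factors of the ambient product $(\C P^1)^{N}$ from \S\ref{submain} to $(1:0)$, while the remaining factors, together with the cubic relations, reproduce the same product-and-relations shape as before. For every $(n-1)$-dimensional admissible $P_{\sigma}$ having $P_{\sigma^{'}}$ as its facet this yields $\tilde{F}_{\sigma}\subseteq\tilde{F}_{\sigma^{'}}$, and hence the easy inclusion $\bigcup_{\sigma}\tilde{F}_{\sigma}\subseteq\tilde{F}_{\sigma^{'}}$.

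The hard part, as in the interior case, is the reverse inclusion $\tilde{F}_{\sigma^{'}}\subseteq\bigcup_{\sigma}\tilde{F}_{\sigma}$: I must verify that the virtual spaces of the facet-neighbours genuinely exhaust $\tilde{F}_{\sigma^{'}}$, including the points adjoined by the wonderful compactification inside $\mathcal{F}_n$. The mechanism should be the one already visible in Theorem~\ref{univ-(n-2)}, where each $\tilde{F}_{\sigma}$ differs from $\tilde{F}_{\sigma^{'}}$ only by replacing one full $(\C P^1)$-product factor by a proper subset $\mathcal{D}$, obtained by switching back on some of the coordinates that $P_{\sigma^{'}}$ sets to zero; as $P_{\sigma}$ ranges over all admissible relaxations of $P_{\sigma^{'}}$ into $\{x_i\ge 0\}$ (respectively $\{x_i\le 1\}$), these subsets must sweep out the whole factor. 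Making this sweeping exact — matching each point of $\tilde{F}_{\sigma^{'}}$ to an admissible facet-neighbour and invoking~\eqref{union} to guarantee that none is omitted — is the step I expect to demand the most care, and it runs in complete parallel with the covering argument underlying the preceding Corollary.
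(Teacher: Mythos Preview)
Your proposal is correct and follows essentially the same route as the paper: reduce to the boundary case (the interior case being the preceding Corollary), observe via Lemma~\ref{polytopbound} that a boundary $(n-2)$-dimensional admissible polytope is cut out by a single extra condition $x_i=0$ or $x_i=1$, and then rerun the chart computation of Theorem~\ref{univ-(n-2)} with this forced vanishing adjoined. The paper's own proof is literally the one-sentence remark preceding the Proposition stating exactly this analogy, so your write-up is a faithful (and more detailed) unpacking of the intended argument.
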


Since the boundary $\partial \Delta _{n,2}$ consists of hypersimplices $\Delta _{n-1, 2}$ and simplices $\Delta _{n-1,1}$ which correspond in $G_{n,2}$  to the Grassmannians  $G_{n-1,2}$ and complex projective spaces $\C P^{n-2}$,   altogether  way we obtain:
\begin{thm}
The universal space of parameters $\mathcal{F}_{n}$ for $T^n$-action on $G_{n,2}$ is given by the formal  union
\begin{equation}
\mathcal{F}_{n} =\underset{\underset{ P_{\sigma}\cap \stackrel{\circ}{\Delta}_{n,2}\neq \emptyset}{\dim P_{\sigma}=n-2}}{\bigcup} \tilde{F}_{\sigma}.
\end{equation}
\end{thm}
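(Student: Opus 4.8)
The plan is to prove the two inclusions separately, the inclusion $\bigcup \tilde{F}_{\sigma}\subseteq \mathcal{F}_{n}$ being immediate since each virtual space of parameters is by construction a subset of $\mathcal{F}_{n}$. For the reverse inclusion I would start from the identity~\eqref{union}, which expresses $\mathcal{F}_{n}$ as the union of the virtual spaces $\tilde{F}_{\sigma}$ over all strata, so that it suffices to show that every $\tilde{F}_{\sigma}$ is already contained in the union over those admissible polytopes $P_{\sigma'}$ with $\dim P_{\sigma'}=n-2$ and $P_{\sigma'}\cap \stackrel{\circ}{\Delta}_{n,2}\neq\emptyset$. The organizing principle is the monotonicity established in the proof that $Y_n$ is open and dense: if $W_{\sigma'}$ is a face-stratum of $W_{\sigma}$ then $\tilde{F}_{\sigma}\subseteq \tilde{F}_{\sigma'}$, so passing to a more degenerate stratum enlarges the virtual space.

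First I would dispose of the strata whose admissible polytope has dimension $n-1$. If $P_{\sigma}\neq \Delta_{n,2}$, then by Theorem~\ref{admn-1} the polytope $P_{\sigma}$ is cut out by at least one hyperplane $\sum_{i\in S}x_i=1$ meeting the interior, hence it has a facet $P_{\sigma'}$ of dimension $n-2$ with $P_{\sigma'}\cap\stackrel{\circ}{\Delta}_{n,2}\neq\emptyset$; Theorem~\ref{univ-(n-2)} then yields $\tilde{F}_{\sigma}\subseteq \tilde{F}_{\sigma'}$, so such $\sigma$ contribute nothing new. For the main stratum $W$, whose polytope is $\Delta_{n,2}$ and whose virtual space is $\tilde{F}=F$, density of $W$ gives $W_{\tau}\subseteq \overline{W}$ for every stratum, whence by monotonicity $F\subseteq \tilde{F}_{\tau}$; choosing any $(n-2)$-dimensional interior $P_{\tau}$ places $F$ inside the required union. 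Combined with the preceding Corollary and Proposition, which identify each $(n-2)$-dimensional virtual space with the union of the $\tilde{F}_{\sigma}$ over its cofacets, this shows that the union over $(n-2)$-dimensional interior polytopes absorbs every stratum whose polytope meets $\stackrel{\circ}{\Delta}_{n,2}$.

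It remains to absorb the strata whose admissible polytopes lie in $\partial\Delta_{n,2}$; by the Proposition on boundary polytopes these are precisely the admissible polytopes of dimension $\leq n-2$ contained in the boundary. Here I would argue by induction on $n$, using the decomposition $\mu^{-1}(\partial\Delta_{n,2})=n\#G_{n-1,2}\cup n\# \C P^{n-2}$ together with the functoriality of the construction. On each simplicial facet $\Delta_{n-1,1}$ the associated variety is $\C P^{n-2}$, whose strata are single $(\C^{\ast})^{n}$-orbits, so their spaces of parameters are points and contribute nothing. On each facet $\Delta_{n-1,2}$ the associated variety is $G_{n-1,2}$ with universal space $\mathcal{F}_{n-1}$, and by the inductive hypothesis $\mathcal{F}_{n-1}$ is the union of the virtual spaces of the $(n-3)$-dimensional interior polytopes of $\Delta_{n-1,2}$. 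The key point, and the main obstacle I anticipate, is to match these inductive pieces with the ambient construction: an interior polytope $\sum_{i\in S}x_i=1$ of $\Delta_{n,2}$ restricts, on a facet $x_j=0$ with $j\notin S$, to an $(n-3)$-dimensional interior polytope of $\Delta_{n-1,2}$, and I would need to verify that under the functorial embedding $\mathcal{F}_{n-1}\hra \mathcal{F}_{n}$ the virtual space of each boundary stratum is carried into the virtual space of the $(n-2)$-dimensional interior polytope of $\Delta_{n,2}$ restricting to it. Checking this compatibility—so that the inductive covering of $\mathcal{F}_{n-1}$ is realized inside the union over the interior $(n-2)$-polytopes of $\Delta_{n,2}$, with the $\C P^{n-2}$ contributions remaining trivial—is the step I expect to require the most care.
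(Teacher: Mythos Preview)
Your proposal is essentially the same approach the paper takes, only far more explicit: the paper's entire proof is the single connecting sentence ``Since the boundary $\partial\Delta_{n,2}$ consists of hypersimplices $\Delta_{n-1,2}$ and simplices $\Delta_{n-1,1}$ which correspond in $G_{n,2}$ to the Grassmannians $G_{n-1,2}$ and complex projective spaces $\C P^{n-2}$, altogether we obtain\ldots'', relying on the preceding Corollary and Proposition for the interior case and on the boundary decomposition for the rest. Your breakdown into (i) $(n-1)$-dimensional $P_\sigma\neq\Delta_{n,2}$ via Theorem~\ref{univ-(n-2)}, (ii) the main stratum via density and monotonicity, and (iii) boundary strata via induction on $n$ is exactly the content the paper compresses into that one line.

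One caution on your inductive step: you invoke a ``functorial embedding $\mathcal{F}_{n-1}\hookrightarrow\mathcal{F}_n$'', but the paper (in the later Proposition~\ref{un-bound}) sets up the relationship in the opposite direction, as a \emph{projection} $r_q:\mathcal{F}_n\to\mathcal{F}_{n-1,q}$ obtained by forgetting the coordinates $(c_{iq}:c_{iq}')$. There is no canonical section. This does not break your plan, but it means the induction should be phrased entirely inside $\mathcal{F}_n$: for a boundary stratum $W_\sigma$ with $P_\sigma\subset\partial\Delta_{n,2}$, the virtual space $\tilde F_\sigma$ is already a subset of $\mathcal{F}_n$, and what you need is that it is covered by the $\tilde F_{\sigma'}$ for interior $(n-2)$-dimensional $P_{\sigma'}$. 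The cleanest route is the one implicit in the paper's Proposition just before the theorem: every $(n-2)$-dimensional $\tilde F_{\sigma'}$ (boundary or not) equals the union of $\tilde F_\sigma$ over its $(n-1)$-dimensional cofacets, so the whole chain reduces to $(n-1)$-dimensional polytopes, each of which (other than $\Delta_{n,2}$) has an interior $(n-2)$-facet. Iterating this facet--cofacet relation down the boundary face lattice is what replaces your proposed embedding argument.
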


\section{Universal space of parameters and admissible polytopes}
 Let  $\mathcal{F}$ be a universal space of parameters for $G_{n,2}$ and $\tilde{F}_{\sigma}$ the  virtual space of parameters for a stratum $W_{\sigma}$. For $x\in \stackrel{\circ}{\Delta}_{n,2}$ denote by
\[
\tilde{F}_{x} =  \underset{x\in \stackrel{\circ}{P}_{\sigma}}{\bigcup}\tilde{F}_{\sigma}.
\]

\begin{thm}
$\tilde{F}_{x} =\mathcal{F}_{n}$ for any $x\in \stackrel{\circ}{\Delta}_{n,2}$.
\end{thm}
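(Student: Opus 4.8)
The plan is to show the two inclusions $\tilde{F}_x \subseteq \mathcal{F}_n$ and $\mathcal{F}_n \subseteq \tilde{F}_x$ separately. The first is immediate: by definition each virtual space of parameters $\tilde{F}_\sigma$ is a subset of the universal space of parameters $\mathcal{F}_n$, so any union of them is contained in $\mathcal{F}_n$. The real content is the reverse inclusion, namely that every point of $\mathcal{F}_n$ already appears in the union taken over only those strata $W_\sigma$ whose open admissible polytope $\stackrel{\circ}{P}_\sigma$ contains the \emph{fixed} interior point $x$. The conceptual difficulty is that this restricts the index set of the union drastically compared to the global decomposition $\mathcal{F}_n = \bigcup_\sigma \tilde{F}_\sigma$ established in the previous section, so one must argue that no part of $\mathcal{F}_n$ is lost by this restriction.

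First I would reduce to the top-dimensional strata. By the theorem at the end of the previous section, $\mathcal{F}_n = \bigcup \tilde{F}_\sigma$ where the union runs over admissible polytopes of dimension $n-2$ meeting $\stackrel{\circ}{\Delta}_{n,2}$; and by Theorem~\ref{univ-(n-2)} together with its corollary, for each such $(n-2)$-dimensional $P_{\sigma'}$ one has $\tilde{F}_{\sigma'} = \bigcup_\sigma \tilde{F}_\sigma$ over the $(n-1)$-dimensional $P_\sigma$ having $P_{\sigma'}$ as a facet. Hence it suffices to prove that for the given $x \in \stackrel{\circ}{\Delta}_{n,2}$, the union of $\tilde{F}_\sigma$ over $(n-1)$-dimensional admissible polytopes $P_\sigma$ with $x \in \stackrel{\circ}{P}_\sigma$ already exhausts $\mathcal{F}_n$. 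The geometric input here is the hyperplane-arrangement description from Theorem~\ref{admn-1} and Theorem~\ref{n-2}: the $(n-1)$-dimensional admissible polytopes are cut out from $\Delta_{n,2}$ by collections of compatible half-spaces $\sum_{i\in S} x_i \le 1$ with pairwise disjoint $S$, and for each $x \in \stackrel{\circ}{\Delta}_{n,2}$ lying off every relevant hyperplane, $x$ lies in the interior of a whole family of such polytopes covering all the ``sides'' of the arrangement.

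The key step is then to show that this family of polytopes through $x$ is rich enough that the associated virtual spaces of parameters cover every coordinate pattern in $\mathcal{F}_n$. I would work in the fixed chart $M_{12}$ and use the explicit product descriptions of $\tilde{F}_\sigma$ appearing in the proof of Theorem~\ref{univ-(n-2)}: a given stratum contributes factors $\C P^1$, $(1:0)$, and proper subvarieties $\mathcal{D}$ according to which entries $a_i, b_j$ of its representing matrix vanish, all subject to the cubic relations $c_{ij}c_{ik}'c_{jk} = c_{ij}'c_{ik}c_{jk}'$. The point is that as $P_\sigma$ ranges over all $(n-1)$-dimensional admissible polytopes whose interior contains the fixed $x$, the corresponding vanishing patterns range over all patterns compatible with $x$ being interior, and via the transition-function homeomorphisms of $\mathcal{F}_n$ guaranteed by Theorem~\ref{universal} one sees that every point of the wonderful compactification is reached.

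The main obstacle I anticipate is the bookkeeping that translates ``$x \in \stackrel{\circ}{P}_\sigma$'' into an admissible vanishing pattern for the matrix entries, and then showing the resulting collection of virtual spaces genuinely covers $\mathcal{F}_n$ rather than merely a large subset. Concretely, one must verify that for the fixed $x$ every generating subvariety $\bar{F}_I$ and every stratum of the exceptional locus of the wonderful blow-up is met by some $\tilde{F}_\sigma$ with $x \in \stackrel{\circ}{P}_\sigma$; the delicate case is a point $x$ that is forced by its coordinates to lie on few hyperplanes, so that relatively few top polytopes pass through it. I expect that the $S_n$-symmetry (Corollary~\ref{cormain}) together with the independence of $\tilde{F}_\sigma$ from the chosen chart lets one normalize $x$ and reduce to checking the covering on the finitely many generators $\alpha_{\{1,2\},l}$ of Corollary~\ref{final}, after which the inclusion $\mathcal{F}_n \subseteq \tilde{F}_x$ follows from the global union formula applied fibrewise over the chamber containing $x$.
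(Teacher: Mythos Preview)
Your overall strategy matches the paper's: both reduce, via the decomposition $\mathcal{F}_n=\bigcup_{\dim P_{\sigma'}=n-2}\tilde F_{\sigma'}$, to proving $\tilde F_{\sigma'}\subset\tilde F_x$ for each interior $(n-2)$-dimensional admissible polytope $P_{\sigma'}$, and both invoke the $S_n$-action to reduce to a single representative hyperplane. Where the two diverge is at your ``key step''. The paper does not argue abstractly that ``the vanishing patterns range over all patterns compatible with $x$ being interior''; instead it carries out an explicit case analysis (demonstrated in full for $n=5$). Fixing, say, $P_{\sigma'}=P_{14}=\{x_1+x_4=1\}$, it distinguishes the cases $x\in\stackrel{\circ}{P}_{14}$, $x_1+x_4>1$, and $x_1+x_4<1$; in the last case it writes $\tilde F_{14}^{-}\cong(0{:}1)\times\C P^1_A\times(1{:}0)$ and then must cover the three missing points $A_1,A_2,A_3$ one at a time, each time splitting further on a second inequality (e.g.\ $x_2+x_3\lessgtr1$) and exhibiting a concrete stratum through $x$ whose virtual space hits the required point. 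In particular, the point $A_3=(0{:}1)\times(1{:}1)\times(1{:}0)$ is only reached after passing to the chart $M_{13}$ and applying the explicit transition-function homeomorphism of $\mathcal F_5$.

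Your proposal identifies all the right ingredients (the half-space description of admissible polytopes, the chart-change homeomorphisms from Theorem~\ref{universal}, the $S_n$-reduction), but it replaces this case analysis with an unjustified assertion. The sentence ``as $P_\sigma$ ranges \ldots\ the corresponding vanishing patterns range over all patterns compatible with $x$ being interior'' is precisely the statement to be proved, and the paper's $n=5$ computation shows that even in the smallest nontrivial case one cannot see this without the explicit chart change. Your remark that the ``delicate case is a point $x$ forced to lie on few hyperplanes'' is also slightly off: a generic $x$ lies on \emph{no} hyperplane of the arrangement, and it is exactly then that one must manufacture enough $(n-1)$-dimensional polytopes through $x$ by successive half-space conditions, as the paper does. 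So your outline is correct in spirit, but to turn it into a proof you need to actually perform the case-by-case covering argument rather than assert its conclusion.
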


 In order to prove this theorem, by   Theorem~\ref{univ-(n-2)}, we need to prove that $\tilde{F}_{\sigma}\subset \tilde{F}_{x}$  for any $P_{\sigma}$ such that $\dim P_{\sigma} = n-2$ and $P_{\sigma}\cap \stackrel{\circ}{\Delta}_{n,2}\neq \emptyset$.

\subsection{Proof for $G_{5,2}$}
It follows from Theorem~\ref{univ-(n-2)} that the admissible polytopes of dimension $3$ are given by the intersection  with $\Delta _{5,2}$ of the planes $x_i+x_j=1$, where $1\leq i<j\leq 5$. Without loss of generality because of the action of the symmetric group $S_5$, fix the admissible polytope defined  by $x_1+x_4=1$, denote it by $P_{14}$, the corresponding stratum by $W_{14}$ and  and its virtual space of parameters by $\tilde{F}_{14}$. The stratum $W_{14}$ belongs to the chart $M_{12}$  and it writes as
\[
\left (
\begin{array}{cc}
1 & 0 \\
0 & 1 \\
0 & b_3 \\
a_4 & 0 \\
0 & b_5
\end{array}
\right ), \;\;\;  a_4, b_3, b_5\neq 0.
\]
It follows that $\tilde{F}_{14} \cong (1:0)\times \C P^{1}\times (1:0)$. 

Let $x\in \stackrel{\circ}{\Delta}_{5,2}$. The following cases are  possible:

\begin{enumerate}
\item If $x\in \stackrel{\circ}{P}_{14}$  then $\tilde{F}_{14}\subset \tilde{F}_{x}$. 
\item  If $x\notin \stackrel{\circ}{P}_{14}$ then  $x_1+x_4>1$ or $x_1+x_4<1$.
\end{enumerate}
If $x_1+x_4>1$ then $x$ belongs to the admissible polytope, we denote it by $P_{14}^{+}$ defined by the half-space $x_1+x_4 \geq 1$. The stratum  which corresponds to this  polytope is, in the chart $M_{12}$, given by
\[
\left (
\begin{array}{cc}
1 & 0 \\
0 & 1 \\
0 & b_3 \\
a_4 & b_4 \\
0 & b_5
\end{array}
\right ),\;\;\; a_4, b_3,  b_4, b_5\neq 0,   
\]
so the corresponding virtual space of parameters is $\tilde{F}_{14}^{+} \cong (0:1)\times \C P^1\times (1:0)$. Since $\tilde{F}_{14}^{+}=\tilde{F}_{14}$, it follows that $\tilde{F}_{14}\subset \tilde{F}_{x}$.

If $x_1+x_4<1$ then $x$ belongs to the admissible polytope, we denote it by $P_{14}^{-}$ defined by the half-space $x_1+x_4 \leq 1$. The corresponding stratum is, in the chart $M_{12}$, given by
\[
\left (
\begin{array}{cc}
1 & 0 \\
0 & 1 \\
a_3 & b_3 \\
a_4 & 0 \\
a_5 & b_5
\end{array}
\right ), \;\;\; a_3,a_4,a_5, b_3, b_5\neq 0.
\]
The corresponding space of parameters is $\tilde{F}_{14}^{-} \cong (0:1)\times \C P^{1}_{A}\times (1:0)$ and we have that $\tilde{F}_{14}^{-}\subset \tilde{F}_{x}$. We need to prove that the points $A_{1}=(0:1)\times (0:1)\times (1:0)$, $A_{2} =(0:1)\times (1:0)\times (1:0)$ and $A_3=(0:1)\times (1:1)\times (1:0)$ belong to $\tilde{F}_{x}$.  In  order to do that we consider the following cases.

1) 
 a) If $x_2+x_3\leq 1$ then $x$ belongs to the intersection of half-spaces $x_1+x_4\leq 1$ and $x_2+x_3\leq 1$. Consider the stratum given by
\[
\left (
\begin{array}{cc}
1 & 0 \\
0 & 1 \\
0 & b_3 \\
a_4 & 0 \\
a_5 & b_5
\end{array}
\right ), \;\;\; a_4, a_5, b_3, b_5\neq 0.
\]
Its admissible polytope is exactly given  by the intersection of these half-spaces  and its virtual space of parameters coincides with its space of parameters which is the point $A_1$. Thus, in this case $A_1\in \tilde{F}_{x}$.

b) If $x_2+x_3\geq 1$  then $x_1+x_4+x_5\leq 1$ and we consider the stratum  given by
\[
\left (
\begin{array}{cc}
1 & 0 \\
0 & 1 \\
a_3 & b_3 \\
a_4 & 0 \\
a_5 & 0
\end{array}
\right ), \;\;\; a_3,a_4,a_5, b_3\neq 0.
\]
Its admissible polytope is given by $x_2+x_3\geq 1$,  its virtual space of parameters is $(0:1)\times (0:1)\times \C P^{1}$  and it contains the point $A_{1}$. It follows that $A_1\in \tilde{F}_{x}$.

2) a) If $x_2+x_5\leq 1$ the  point $x$ belongs to the intersection of half-spaces  $x_1+x_4\leq 1$ and $x_2+x_5\leq 1$. Now consider the stratum
\[
\left (
\begin{array}{cc}
1 & 0 \\
0 & 1 \\
a_3 & b_3 \\
a_4 & 0 \\
0 & b_5
\end{array}
\right ), \;\;\; a_3, a_4, b_3, b_5\neq 0.
\]
It admissible polytope is exactly given by the intersection of these half-spaces and its virtual space of parameters coincides with its 
space of parameters that is with the point $A_{2}=(0:1)\times (1:0)\times (1:0)$. Thus, $A_{2}\in \tilde{F}_{x}$.

b) if $x_2+x_5\geq 1$ it follows that $x_1+x_3+x_4\leq 1$, so we consider the stratum
\[
\left (
\begin{array}{cc}
1 & 0 \\
0 & 1 \\
a_3 & 0 \\
a_4 & 0 \\
a_5 & b_5
\end{array}
\right ), \;\;\; a_3, a_4, a_5, b_5\neq 0.
\]
Its admissible polytope is exactly given by $x_2+x_5\geq 1$ and its virtual space of parameters is $\C P^{1}\times (1:0)\times (1:0)$.
So, $A_{2}\in \tilde{F}_{x}$.

3)
a) If $x_3+x_5\leq 1$ then $x$ belongs to the intersection of the half-spaces  $x_1+x_4\leq 1$ and $x_3+x_5\leq 1$ and we consider the stratum
\[
\left (
\begin{array}{cc}
1 & 0 \\
0 & 1 \\
a_3 & b_3 \\
a_4 & 0 \\
a_5 & b_5
\end{array}
\right ),\;\;\;  a_3, a_4, a_5, b_3, b_5\neq 0, \; a_3b_5=a_5b_3.
\]
Its admissible polytope is exactly given by this intersection. Its virtual space of parameters  coincides with its space of parameters and it is the point $A_3= (0:1)\times (1:1)\times (1:0)$. Thus, $A_3\in \tilde{F}_{x}$.

b) If $x_3+x_5\geq 1$ the $x_1+x_2+x_3\leq 1$. The stratum $W_{35}^{+}$ whose admissible polytope is defined in this way does not belong to the chart $M_{12}$. This stratum belongs to the chart $M_{13}$ and in this charts it writes as
\[
\left (
\begin{array}{cc}
1 & 0 \\
a_2 & 0 \\
0 & 1 \\
a_4 & 0 \\
a_5 & b_5
\end{array}
\right ), a_2, a_4, a_5, b_5\neq 0.
\]
Its virtual space of parameters is $\C P^1\times (1:0)\times (1:0)$. The transition function  between  the charts $M_{12}$ and $M_{13}$ induces the homeomorphism $f : F_{12}\to F_{13}$ which can be extended to the homeomorphism $\tilde{f} : \mathcal{F}_{5} \to \mathcal{F}_{5}$. This homeomorphism is given by
\[
((c_{34}:c_{34}^{'}), (c_{35}:c_{35}^{'}), (c_{45}:c_{45}^{'})) \to
\]
\[ ((c_{34}:c_{34}-c_{34}^{'}), (c_{35}:c_{35}-c_{35}^{'}),( c_{35}(c_{34}-c_{34}^{'}) : c_{34}(c_{35}-c_{35}^{'})).
\]
It implies that the preimage of $\C P^1\times (1:0)\times (1:0)$ is given by $((c_{34}:c_{34}^{'}), (1:1), (c_{34}:c_{34}^{'}))$, where
$(c_{34}:c_{34}^{'})\in \C P^1$. Thus, for $c_{34}=0$ we obtain that the point $A_{3} = (0:1)\times (1:1)\times (1:0)$ belongs to $\tilde{F}_{x}$.

Altogether we proved in this way that $\tilde{F}_{14}\subset \tilde{F}_{x}$.

By the action of the symmetric group we obtain this to be true for all $\tilde{F}_{ij}$, that is
\[
\tilde{F}_{x} = \mathcal{F}_{5}.
\]

Using the same pattern the  following holds as well:

\begin{prop}\label{emptyint}
If $P_{\sigma}, P_{\sigma ^{'}}$ are admissible polytopes such that $\stackrel{\circ}{P}_{\sigma}\cap \stackrel{\circ}{P}_{\sigma ^{'}}$ has  non-empty intersection with $\stackrel{\circ}{\Delta}_{n,2}$  then $\tilde{F}_{\sigma}\cap \tilde{F}_{\sigma ^{'}}=\emptyset$.
\end{prop}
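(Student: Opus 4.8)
The plan is to re-run the local chart computation of the preceding proof that $\tilde{F}_{x}=\mathcal{F}_{n}$, but now to extract a \emph{conflict} between two parameter assignments rather than a covering. We may assume $\sigma\neq\sigma'$, since otherwise the statement is empty. Fix $x\in\stackrel{\circ}{P}_{\sigma}\cap\stackrel{\circ}{P}_{\sigma'}\cap\stackrel{\circ}{\Delta}_{n,2}$. A routine check shows that, after a permutation from $S_{n}$, both strata may be placed in a common Pl\"ucker chart $M_{12}$ — their polytopes, being cut from $\Delta_{n,2}$ by the half-spaces of Theorems~\ref{n-2} and~\ref{admn-1}, share a vertex $\Lambda_{ij}$ — so I work in the coordinates $(z_{3},\dots,z_{n},w_{3},\dots,w_{n})$. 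In this chart each stratum is recorded by its vanishing pattern $\{i:z_{i}=0\}$, $\{i:w_{i}=0\}$, and by \eqref{orbit} this pattern assigns to every parameter $(c_{kl}:c_{kl}^{'})$ exactly one of three types: an endpoint $(1:0)$ or $(0:1)$ when precisely one of the products $z_{k}w_{l},z_{l}w_{k}$ vanishes; the generic free factor $\C P^{1}_{A}$, which excludes $(1:0),(0:1)$ and $(1:1)$, when $z_{k},w_{k},z_{l},w_{l}$ are all nonzero; and a full free factor $\C P^{1}$ when both products vanish. Thus $\tilde{F}_{\sigma}$ and $\tilde{F}_{\sigma'}$ are carved out of $\mathcal{F}_{n}$ coordinatewise, subject to the relations \eqref{relat}.

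The heart of the argument is to exhibit a single pair $\{k,l\}$ at which the two assignments are incompatible, for then $\tilde{F}_{\sigma}\cap\tilde{F}_{\sigma'}=\emptyset$. Incompatibility arises in exactly two ways: the two strata pin the same parameter to the two \emph{distinct} endpoints $(1:0)$ and $(0:1)$; or one stratum pins a parameter to an endpoint while the other forces the generic factor $\C P^{1}_{A}$ there, which excludes both endpoints. I would organize the search for such a pair exactly as in the $G_{5,2}$ computation: using that $x$ satisfies all defining strict inequalities $\sum_{i\in S}x_{i}<1$ of both polytopes, split into the finitely many cases according to the side of each relevant hyperplane on which $x$ lies, write down a representative matrix for each stratum, read off the pinned parameters, and locate the conflicting coordinate. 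The starting point in every case is that, since $\sigma\neq\sigma'$, some vertex $\Lambda_{ab}$ lies in exactly one polytope, say $\Lambda_{ab}\in P_{\sigma}\setminus P_{\sigma'}$, i.e. $P^{ab}=0$ on $W_{\sigma'}$ but $P^{ab}\neq0$ on $W_{\sigma}$; this is the difference in vanishing patterns to be propagated into a conflicting value.

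The main obstacle is that this differing vertex need not, by itself, produce a conflict: if $P^{ab}=0$ on $W_{\sigma'}$ happens through the \emph{degenerate} mechanism (say $z_{a}=z_{b}=0$), then the $\{a,b\}$-parameter is a full free $\C P^{1}$ on $\tilde{F}_{\sigma'}$ and simply contains whatever value $\tilde{F}_{\sigma}$ assigns it. The remedy is to propagate the support difference to a neighbouring pair: the index $a$ that is switched off in $W_{\sigma'}$ but active in $W_{\sigma}$ must, because the relevant torus $T^{\sigma}$ acts freely and $x$ is interior, pair with some third index $c$ active in both strata, and then $(c_{ac}:c_{ac}^{'})$ is pinned to an endpoint in one stratum while being of generic type $\C P^{1}_{A}$ in the other — the second form of incompatibility above. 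This is the step where the overlapping-interior hypothesis is used essentially: it is exactly the negation of the facet situation of Theorem~\ref{univ-(n-2)}, under which one instead obtains the containment $\tilde{F}_{\sigma}\subseteq\tilde{F}_{\sigma'}$. Thus the genuine content is the dichotomy ``relative interiors meet $\Leftrightarrow$ the two coordinatewise constraints are incomparable,'' and once a conflicting coordinate is produced the disjointness is immediate; I expect the finite but somewhat lengthy bookkeeping of this case analysis, rather than any conceptual difficulty, to be the bulk of the work.
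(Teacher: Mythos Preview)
Your strategy is essentially the paper's: place both strata in a common chart, read off the coordinatewise constraints on each $(c_{kl}:c_{kl}^{'})$, and exhibit a single index pair where the two constraints are incompatible. The paper organizes the search differently --- it splits into three cases according to $(\dim P_{\sigma},\dim P_{\sigma'})\in\{(n-1,n-1),(n-1,n-2),(n-2,n-2)\}$, chooses in each case a chart dictated by the defining half-spaces (e.g.\ $M_{1,k+1}$ when $P_{\sigma}$ is $x_{1}+\dots+x_{k}\le1$), writes the generic matrix for each stratum, and reads off the conflict directly --- whereas you propose to start from a vertex $\Lambda_{ab}\in P_{\sigma}\setminus P_{\sigma'}$ and propagate. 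Both routes land on the same mechanism.

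One genuine omission in your coordinatewise classification: besides the three types you list, there is a fourth, namely the pinned value $(1:1)$, which occurs when $z_{k},w_{k},z_{l},w_{l}$ are all nonzero but the stratum additionally imposes $z_{k}w_{l}=z_{l}w_{k}$ (i.e.\ $P^{kl}=0$ via proportionality rather than via a vanishing coordinate). This case actually arises in the paper's computation (the $(1:1)^{k-1}$ blocks in \eqref{virt2}), and since $(1:1)\notin\C P^{1}_{A}$ it is itself a source of incompatibility you should allow for. Your propagation argument still goes through once you add this case, but as written your trichotomy is incomplete.
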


\begin{proof}
We differentiate   the following cases depending on the dimensions of $P_{\sigma}$ and $P_{\sigma ^{'}}$.

1.  Let $\dim P_{\sigma}=\dim P_{\sigma ^{'}} = n-1$.  We provide the proof for the case when  each of $P_{\sigma}$ and $P_{\sigma ^{'}}$  is defined by just  one half-space according to Theorem~\ref{admn-1}. In the case when $P_{\sigma}$ or $P_{\sigma ^{'}}$ is given as the intersection of the larger number of half spaces the proof goes in an analogous way.
Now,  because of the action of the symmetric group we can assume that $P_{\sigma}$ is defined by the  half-space $x_1+x_2+\ldots +x_k\leq 1$, $2\leq k\leq n-2$ and let $P_{\sigma}^{'}$ is defined by $x_{p_1}+\ldots +x_{p_s}\leq 1$. Since $x_1+\ldots +x_n=2$ and these two half-spaces intersect it follows that there exists $i$, $k+1\leq i\leq n$ such that $i\neq p_1, \ldots p_s$. Without loss of generality we can assume that $i=k+1$.  

 It follows that the strata $W_{\sigma}$ and $W_{\sigma ^{'}}$  belong to the chart $M_{1,k+1}$. The stratum $W_{\sigma}$ writes in this charts as
\[
\left(
\begin{array}{cc}
1 & 0 \\
a_2 & 0 \\
a_3  & 0 \\
\vdots & \vdots \\
a_k & 0\\
0 & 1\\
a_{k+2} & b_{k+2} \\
\vdots & \vdots \\
a_n  & b_n 
\end{array}
\right ),
\]
where $a_i\neq 0$, $2\leq i\leq n$, $i\neq k+1$ and $b_i\neq 0$, $k+2\leq i\leq n$. It follows that
\begin{equation}\label{virt1}
\tilde{F}_{\sigma} =  (\C P^{1})^{k-2}\times (1:0)^{n-k-1}\times (\C P^{1})^{k-3}\times (1:0)^{n-k-1}\times \ldots \times \C P^{1}\times (1:0)^{n-k-1}
\end{equation}
\[
\times (1:0)^{n-k-1}\times (\C P^{1}_{A})^{\frac{(n-k-1)(n-k-2)}{2}}.
\]
The stratum $W_{\sigma ^{'}}$ is, in this chart,  given by the conditions $b_{p_s}^{'} =0$, $1\leq s\leq l$.  Now if $\tilde{F}_{\sigma}\cap \tilde{F}_{\sigma}^{'}\neq \emptyset$  than  in $\tilde{F}_{\sigma ^{'}}$ we have $(c_{pq}: c_{pq}^{'}) = (1:0)$  or  $(c_{pq}: c_{pq}^{'}) = \C P^1$   for any  $2\leq p\leq k$ and any $k+2\leq q \leq n$.  In both cases we must have that $b_{p}^{'}=0$ which implies that $\{2, \ldots , k\}\subset \{p_1, \ldots , p_l\}$.   Moreover, there must exist $p_s\notin \{2,\ldots , k\}$  and note that not all $b_{i}^{'}$-s are zeros.  We can assume that $b_{k+2}^{'}\neq 0$ and $b_{k+3}^{'}=0$.   It follows that in $\tilde{F}_{{\sigma}^{'}}$ we have that $(c_{k+2, k+3}:  c_{k+2, k+3}^{'}) = (0:1)$   which  together with~\eqref{virt1} gives the  contradiction with an assumption that $\tilde{F}_{\sigma}$ and $\tilde{F}_{\sigma ^{'}}$ intersect.

2. Let $\dim P_{\sigma} = n-1$ and $\dim P_{\sigma^{'}}=n-2$. Assume that $P_{\sigma}$ is given by $x_1+\ldots +x_k\leq 1$ , $2\leq k\leq n-2$ and  $P_{\sigma ^{'}}$ is given by $x_{p_1}+\ldots+x_{p_l}=1$, $2\leq l\leq n-2$.  Since these polytopes intersect  it follows that we can assume that  $k+1\notin\{p_1, \ldots, p_l\}$ and that $p_1\notin \{1, \ldots k\}$.  It follows that the both strata $W_{\sigma}$ and $W_{\sigma ^{'}}$ belong to the chart $M_{k+1, p_1}$. Then stratum  $W_{\sigma}$ is given in this chart by
\[
\left(
\begin{array}{cc}
a_1 & b_1 \\
a_2 & b_2 \\
\vdots & \vdots \\
a_k & b_k\\
1 & 0\\
a_{k+2} & b_{k+2} \\
\vdots & \vdots \\
0  & 1\\
a_{p_1+1} & b_{p_1+1}\\
\vdots & \vdots\\
a_n & b_n 
\end{array}
\right ),
\]
where $a_ib_i=a_jb_i$ for $1\leq i<j \leq k$ and $a_i, b_i\neq 0$ for $i\geq k+2$. It follows that
\begin{equation}\label{virt2}
\tilde{F}_{\sigma} = (1:1)^{k-1}\times (\C P^{1}_{A})^{n-k-2}\times (1:1)^{k-2}\times (\C P^{1}_{A})^{n-k-2}\times \ldots \times (1:1)\times (\C P^{1}_{A})^{n-k-2}
\end{equation}
\[
\times (\C P^{1}_{A})^{n-k-2} \times ((\C P^{1}_{A})^{n-k-2})^{\frac{(n-k-1)(n-k-2)}{2}}.
\]
The  points of the stratum  $W_{\sigma ^{'}}$ in this chart satisfy conditions $a_{p_s}^{'} = 0$ for $1\leq s\leq l$.  It follows that $b_{p_s}^{'}\neq 0$ and $b_{q}^{'}=0$ for $q\neq p_s$, $1\leq s\leq l$, see Proposition~\ref{point}. Note that there exists
$q_{0}\neq k+1$ such that $q_{0}\notin \{p_1, \ldots , p_l\}$.  It follows that $(c_{q_{0}p_s}: c_{q_{0}p_s}^{'}) = (0: 1)$ or $(1:0)$ in $\tilde{F}_{\sigma}^{'}$. Comparing to~\eqref{virt2} we conclude that $\tilde{F}_{\sigma}\cap \tilde{F}_{\sigma ^{'}}=\emptyset$.

3.  Let $\dim P_{\sigma} = \dim P_{\sigma^{'}}=n-2$ and assume that $P_{\sigma}$ is given by $x_1+\ldots +x_k=1$, $2\leq k\leq n-2$  and $P_{\sigma ^{'}}$ is given by $x_{p_1}+\ldots +x_{p_l}=1$, $2\leq l\leq n-2$. We can assume that $x_1\notin \{p_1, \ldots, p_l\}$ and $p_1\notin \{1, \ldots, k\}$. Then the strata $W_{\sigma}$ and $W_{\sigma ^{'}}$ belong to the chart $M_{1p_1}$   The stratum $W_{\sigma}$ is in this chart given by  
\[
\left(
\begin{array}{cc}
1 & 0 \\
a_2 & 0 \\
\vdots & \vdots \\
a_k & 0\\
0 & b_{k+1}\\
\vdots & \vdots \\
0 & 1\\
0 & b_{p_1+1} \\
\vdots & \vdots \\
0 & b_n 
\end{array}
\right ),
\]

wher $b_{i}\neq  0$, $2\leq i\leq k$, $b_{j}\neq 0$, $k+1\leq i<j \leq n$, $ j\neq p_1$. It follows that 
\begin{equation}\label{virt3}
\tilde{F}_{\sigma}= (\C P^{1})^{k-2}\times (1:0)^{n-k-1}\times \ldots \times \C P^{1}\times (1:0)^{n-k-1}
\end{equation}
\[
\times (1:0)^{n-k-1}\times (\C P^{1})^{\frac{(n-k-1)(n-k-2)}{2}}.
\]
The points of the stratum $W_{\sigma ^{'}}$ are in this chart satisfy  the conditions $a_{p_s}^{'}=0$ for all $2\leq s\leq l$. It follows that $b_{p_s}^{'}\neq 0$ and $b_{q}^{'}=0$ for $q\neq p_s$ , $2\leq s\leq l$.  
Let $q_{0}$ be such that  $q_{0}\notin\{1, \ldots, k\}$  and $q_{0}\notin \{p_1, \ldots, p_{l}\}$.  Note that there exist $i_{0}$, $2\leq i_0\leq k$ such that $i_{0}\in \{ p_1, \ldots p_l\}$.  It  implies that  $(c_{i_{0}q_{0}}: c_{i_{0}q_{0}}^{'})= (0:1)$  in $\tilde{F}_{\sigma}^{'}$ which together with~\eqref{virt3} implies that $\tilde{F}_{\sigma}\cap \tilde{F}_{\sigma ^{'}}=\emptyset$.  
\end{proof}

\begin{cor}
Let $x\in \stackrel{\circ}{\Delta}_{n,2}$. If $\tilde{F}_{\sigma}, \tilde{F}_{\sigma ^{'}}\subset \tilde{F}_{x}$ then $\tilde{F}_{\sigma}\cap \tilde{F}_{\sigma ^{'}}=\emptyset$.
\end{cor}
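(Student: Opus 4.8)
The plan is to obtain this corollary as an immediate consequence of Proposition~\ref{emptyint}; essentially all of the real work has already been carried out there, so the only task is to read the hypothesis correctly and check that it supplies precisely the geometric condition that Proposition~\ref{emptyint} requires.

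First I would unwind the defining formula $\tilde{F}_{x} = \bigcup_{x\in \stackrel{\circ}{P}_{\sigma}}\tilde{F}_{\sigma}$. The assumption $\tilde{F}_{\sigma}\subset \tilde{F}_{x}$ is to be understood as the statement that $\tilde{F}_{\sigma}$ appears as one of the terms of this union, that is, that $x$ lies in the (relative) interior $\stackrel{\circ}{P}_{\sigma}$ of the admissible polytope of the stratum $W_{\sigma}$; likewise $\tilde{F}_{\sigma ^{'}}\subset \tilde{F}_{x}$ means $x\in \stackrel{\circ}{P}_{\sigma ^{'}}$. This reading is in fact \emph{forced}: since the preceding theorem gives $\tilde{F}_{x}=\mathcal{F}_{n}$, taking $\subset$ in the naive set-theoretic sense would make the hypothesis vacuous, and Theorem~\ref{univ-(n-2)} (which produces pairs with $\tilde{F}_{\sigma}\subseteq \tilde{F}_{\sigma ^{'}}$) would then contradict the asserted conclusion. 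Having fixed this identification, I would observe that $x$ lies simultaneously in $\stackrel{\circ}{P}_{\sigma}$ and $\stackrel{\circ}{P}_{\sigma ^{'}}$, so $x\in \stackrel{\circ}{P}_{\sigma}\cap \stackrel{\circ}{P}_{\sigma ^{'}}$.

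Since by hypothesis $x\in \stackrel{\circ}{\Delta}_{n,2}$, the single point $x$ already witnesses that $\stackrel{\circ}{P}_{\sigma}\cap \stackrel{\circ}{P}_{\sigma ^{'}}$ has non-empty intersection with $\stackrel{\circ}{\Delta}_{n,2}$, which is exactly the hypothesis of Proposition~\ref{emptyint}. Applying that proposition yields $\tilde{F}_{\sigma}\cap \tilde{F}_{\sigma ^{'}}=\emptyset$, completing the argument. The only genuine obstacle here is the interpretive first step of pinning down what $\tilde{F}_{\sigma}\subset \tilde{F}_{x}$ is meant to assert; once one commits to the reading that $\sigma$ ranges over the indices of the union defining $\tilde{F}_{x}$ (equivalently $x\in \stackrel{\circ}{P}_{\sigma}$), the remainder is a one-line appeal to Proposition~\ref{emptyint}, with no calculations and no case analysis left to perform, all of these having been absorbed into the proof of that proposition.
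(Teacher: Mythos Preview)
Your proposal is correct and matches the paper's intent: the corollary is stated without proof immediately after Proposition~\ref{emptyint}, so it is meant to follow by direct application of that proposition, exactly as you argue. Your careful reading of the hypothesis $\tilde{F}_{\sigma}\subset \tilde{F}_{x}$ as meaning $x\in \stackrel{\circ}{P}_{\sigma}$ (rather than mere set-theoretic containment, which would be vacuous given $\tilde{F}_{x}=\mathcal{F}_{n}$) is the only non-trivial point, and it is confirmed by the parallel phrasing in Corollary~\ref{cdis}.
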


\section{The chamber decomposition for $\Delta _{n,2}$}
Consider the hyperplane arrangement in $\R ^{n}$ given by
\begin{equation}\label{hyp}
\mathcal{A}_{n} : \;  \Pi \cup \{x_i=0, 1\leq i\leq n\}\cup \{x_i=1, 1\leq i\leq n\},
\end{equation}
where the set $\Pi$ is given by~\eqref{novo}, and  the face  lattice $L(\mathcal{A}_{n})$ of  the    hyperplane arrangement $\mathcal{A}_{n}$ .   This lattice consists  of the  hyperplanes from $\mathcal{A}_{n}$ and all intersections of the elements from $\mathcal{A}_{n}$.  

 The hyperplane arrangement $\mathcal{A}_{n}$ induces  the hyperplane arrangement in  $\R ^{n-1} = \{ (x_1, \ldots, x_n)\in \R ^{n} : x_1+\ldots +x_n=2\}$ which is obtained by intersecting this $\R ^{n-1}$ with the planes from~\eqref{hyp}. 

Denote by $L(\mathcal{A}_{n,2}) = L(\mathcal{A}_{n})\cap \Delta _{n,2}$. Then $L(\mathcal{A}_{n,2})$ provides decomposition for $\Delta _{n,2}$ which we call  chamber decomposition and for an element $C\in L(\mathcal{A}_{n,2})$ we say to be a  chamber.

\begin{lem}
Let  $C\in L(\mathcal{A}_{n,2})$ such that $\dim C=n-1$.  If  $C$ has a nonempty intersection with   $\stackrel{\circ}{P}_{\sigma}$ then $C\subset \stackrel{\circ}{P}_{\sigma}$.  Thus, $C$ can be  be obtained as the intersection of  the interiors of all admissible polytopes which contain it.
\end{lem}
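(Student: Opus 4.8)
The plan is to identify the combinatorially defined chamber $C$ with a single sign-region of the arrangement $\mathcal{A}_n$, and to describe the relative interior $\stackrel{\circ}{P}_\sigma$ by the strict versions of its facet inequalities; matching the signs at one common point of $C\cap\stackrel{\circ}{P}_\sigma$ will then propagate across all of $C$. First I would fix the picture of $C$. A chamber with $\dim C=n-1$ is a top-dimensional relatively open cell, i.e.\ a connected component of $\stackrel{\circ}{\Delta}_{n,2}$ with the traces of all hyperplanes of $\mathcal{A}_n$ deleted; since the planes $x_i=0$ and $x_i=1$ lie in $\mathcal{A}_n$ and, by Lemma~\ref{polytopbound}, cover $\partial\Delta_{n,2}$, every such $C$ is contained in $\stackrel{\circ}{\Delta}_{n,2}$, and each functional $x_i$ together with each $\sum_{i\in S}x_i-1$ coming from a plane of $\Pi$ has constant nonzero sign on $C$. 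Because $\stackrel{\circ}{\Delta}_{n,2}$ is convex its sign-regions are convex, hence connected, so $C$ is precisely the set of points of $\stackrel{\circ}{\Delta}_{n,2}$ realizing the sign vector of $C$. I would also observe that the hypothesis forces $\dim P_\sigma=n-1$: if $\dim P_\sigma<n-1$, then by Theorem~\ref{n-2} for the interior $(n-2)$-polytopes and by the boundary description in the lower-dimensional cases, $P_\sigma$ lies inside a single hyperplane of $\mathcal{A}_n$, which $C$ avoids.

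Next I would pin down $\stackrel{\circ}{P}_\sigma$. By Theorem~\ref{admn-1}, $P_\sigma=\Delta_{n,2}\cap\bigcap_{S\in\mathcal{S}}H_S$ for a family $\mathcal{S}$ of pairwise disjoint sets with $2\le\|S\|\le n-2$. I would first check that each bounding plane $\sum_{i\in S}x_i=1$, restricted to the affine hull $\{\sum_i x_i=2\}$, is a plane of $\Pi$: this is clear when $\|S\|\le[\frac{n}{2}]$, and when $\|S\|>[\frac{n}{2}]$ one has $2\le\|\bar S\|\le[\frac{n}{2}]$ for $\bar S=\{1,\dots,n\}\setminus S$, and $\sum_{i\in S}x_i=1$ agrees on the affine hull with $\sum_{i\in\bar S}x_i=1\in\Pi$. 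Since none of the functionals $x_i$, $x_i-1$, $\sum_{i\in S}x_i-1$ is constant on that affine hull, a routine relative-interior argument gives
\[
\stackrel{\circ}{P}_\sigma=\Big\{x:\ \sum_i x_i=2,\ 0<x_i<1\ \text{for all } i,\ \sum_{i\in S}x_i<1\ \text{for all } S\in\mathcal{S}\Big\}.
\]

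Now I would carry out the propagation. Choosing $x_0\in C\cap\stackrel{\circ}{P}_\sigma$, each functional $\sum_{i\in S}x_i-1$ is negative at $x_0$ and, being attached to a plane of $\Pi\subset\mathcal{A}_n$, has constant sign on $C$, hence is negative throughout $C$. Combined with $C\subseteq\stackrel{\circ}{\Delta}_{n,2}$, which supplies $0<x_i<1$ on $C$, the description of $\stackrel{\circ}{P}_\sigma$ above yields $C\subseteq\stackrel{\circ}{P}_\sigma$, which is the first assertion.

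For the final statement I would produce, plane by plane, admissible polytopes whose interiors contain $C$ and whose interiors intersect down exactly to $C$. For each plane $h_S\in\Pi$ on whose negative side $C$ lies I take $\Delta_{n,2}\cap H_S$, and for each on whose positive side $C$ lies I take $\Delta_{n,2}\cap H_{\bar S}$, which is again admissible by Theorem~\ref{admn-1} since $2\le\|\bar S\|\le n-2$. By the first part each of these contains $C$ in its interior, while the intersection of their interiors is contained in the common sign-region, that is in $C$; hence this finite intersection equals $C$, and intersecting over all admissible polytopes containing $C$ can only return $C$. The step I expect to be the main obstacle is the bookkeeping of the second paragraph: verifying that $\stackrel{\circ}{P}_\sigma$ is genuinely cut out by the strict inequalities of its facet planes, and, crucially, that every facet plane belongs to $\mathcal{A}_n$ through the complementation $S\leftrightarrow\bar S$ — for it is precisely this identification that converts the constancy of signs on $C$ into membership in $\stackrel{\circ}{P}_\sigma$.
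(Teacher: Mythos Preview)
Your proof is correct and follows essentially the same approach as the paper's: both rely on the fact that every facet of an $(n-1)$-dimensional admissible polytope lying in $\stackrel{\circ}{\Delta}_{n,2}$ is contained in a hyperplane of $\Pi$, so a top-dimensional chamber cannot cross such a facet; and both establish the second claim by observing that each plane of $\Pi$ cuts $\Delta_{n,2}$ into two admissible polytopes, so the walls of $C$ can be realized as facets of admissible polytopes containing $C$. Your version is considerably more explicit than the paper's---in particular your careful verification via the complementation $S\leftrightarrow\bar S$ that every facet plane from Theorem~\ref{admn-1} genuinely belongs to $\Pi$, and your sign-vector formulation of the chamber---but the underlying argument is the same.
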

\begin{proof}
 We first note that if $\stackrel{\circ}{P}_{\sigma}\cap C \neq \emptyset$ then $\dim P_{\sigma}=n-1$ as well. Moreover, any facet of $P_{\sigma}$ which intersects $\stackrel{\circ}{\Delta} _{n,2}$ belongs to   some of the hyperplanes from the set $\Pi$  which define the chamber decomposition $\mathcal{C}$. For the second statement, we note that any wall $V$ of the chamber $C$ is contained in a facet of an  admissible polytope which contains the chamber $C$. This follows from the fact that any of the  hyperplanes  from $\Pi$  divides the hypersimplex $\Delta _{n,2}$ into two admissible polytopes.  
\end{proof}
The same is true for an arbitrary  chamber from $L(\mathcal{A}_{n,2})$.
\begin{lem}
Any element $C\in L(\mathcal{A}_{n,2})$ can be obtained as the intersection of  the interiors all admissible polytopes which contain $C$.
\end{lem}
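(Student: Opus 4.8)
The plan is to establish the nontrivial inclusion, since $C\subseteq \bigcap \stackrel{\circ}{P}_{\sigma}$, the intersection being taken over all admissible $P_{\sigma}$ whose relative interior $\stackrel{\circ}{P}_{\sigma}$ contains $C$, is immediate. For the reverse inclusion I would use the description of a face of the arrangement $\mathcal{A}_{n}$ by its sign vector: a point $y\in \Delta _{n,2}$ lies in $C$ if and only if, for every hyperplane $H\in \mathcal{A}_{n}$, the point $y$ lies on the same side of $H$ as, or on, $C$. Thus it suffices to show that if $y$ lies in every $\stackrel{\circ}{P}_{\sigma}$ containing $C$, then $y$ has the same sign vector with respect to $\mathcal{A}_{n}$ as $C$.

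First I would treat the case $C\subset \stackrel{\circ}{\Delta}_{n,2}$. Here $\Delta _{n,2}$ is itself an admissible polytope with $C\subset \stackrel{\circ}{\Delta}_{n,2}$, so any such $y$ lies in $\stackrel{\circ}{\Delta}_{n,2}$; since $C$ is interior it is strictly inside every bounding hyperplane $x_i=0$ and $x_i=1$, and $y\in \stackrel{\circ}{\Delta}_{n,2}$ gives the same, so these signs already match. It remains to match the signs on the hyperplanes $H_{S}:\ \sum_{i\in S}x_i=1$ from $\Pi$, see~\eqref{novo}. By Theorem~\ref{n-2} each $H_{S}$ supports the $(n-2)$-dimensional admissible polytope $Q_{S}=\Delta _{n,2}\cap H_{S}$, and by Theorem~\ref{admn-1} the half-spaces $\sum_{i\in S}x_i\leq 1$ and $\sum_{i\in S}x_i\geq 1$ cut out the two $(n-1)$-dimensional admissible polytopes $P_{S}^{-}$ and $P_{S}^{+}$, whose relative interiors are $\stackrel{\circ}{\Delta}_{n,2}\cap\{\sum_{i\in S}x_i<1\}$, $\stackrel{\circ}{\Delta}_{n,2}\cap\{\sum_{i\in S}x_i>1\}$ and $\stackrel{\circ}{\Delta}_{n,2}\cap H_{S}$. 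These three relative interiors correspond bijectively to the three possible positions of $C$ relative to $H_{S}$, and exactly the one matching $C$ occurs among the $\stackrel{\circ}{P}_{\sigma}$ containing $C$. Hence $y$ lies on the same side of every $H_{S}$ as $C$, so $y$ and $C$ share the sign vector and $y\in C$.

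The remaining case is a face $C\subset \partial \Delta _{n,2}$, which I would treat by induction on $n$. The boundary $\partial \Delta _{n,2}$ is the union of $n$ facets $\{x_i=0\}\cap \Delta _{n,2}\cong \Delta _{n-1,2}$ and $n$ facets $\{x_i=1\}\cap \Delta _{n,2}\cong \Delta _{n-1,1}$, each an admissible polytope. The arrangement $\mathcal{A}_{n}$ restricts on a facet $\{x_i=0\}$ to the arrangement $\mathcal{A}_{n-1}$ of the hypersimplex $\Delta _{n-1,2}$, and the admissible polytopes of $G_{n-1,2}$ supported in that facet are exactly the boundary admissible polytopes of $G_{n,2}$ lying there. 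Thus the inductive hypothesis, applied inside the facet, expresses $C$ as the intersection of relative interiors of admissible polytopes all contained in $\{x_i=0\}$; each such relative interior already forces $x_i=0$, so a point $y$ lying in all of them satisfies $x_i=0$ and lies in $C$. The facets $\{x_i=1\}\cong \Delta _{n-1,1}$ are the moment polytopes of the $\C P^{n-2}$'s in the boundary and carry no interior admissible subdivision, so each of their faces is the relative interior of a single sub-simplex and is realized trivially. Iterating the reduction down the boundary disposes of all boundary faces.

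The heart of the argument is the interior case, which rests on the identification, already implicit in the preceding lemma and made precise by Theorems~\ref{n-2} and~\ref{admn-1}, that every hyperplane of $\mathcal{A}_{n}$ from $\Pi$ is simultaneously the common facet-hyperplane of the admissible pair $P_{S}^{\pm}$ and the support of the admissible polytope $Q_{S}$, so that each of the three possible sign values at $H_{S}$ is individually realized by the relative interior of an admissible polytope. The main technical obstacle I expect lies in the boundary bookkeeping: verifying that the restriction of $\mathcal{A}_{n}$ to each facet coincides with the arrangement of the corresponding lower hypersimplex $\Delta_{n-1,2}$ or simplex $\Delta_{n-1,1}$, and that boundary admissible polytopes restrict correctly, so that the induction applies cleanly. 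This is precisely where the functorial compatibility of the construction with the embeddings $\Delta _{n-1,2}\hookrightarrow \partial\Delta _{n,2}$ is used.
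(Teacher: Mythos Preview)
Your proposal is correct and rests on the same key observation the paper uses: every hyperplane of $\mathcal{A}_n$ cuts $\Delta_{n,2}$ in an admissible polytope, so the arrangement faces are carved out by admissible-polytope interiors. The paper's own proof is a two-line sketch: it writes $C=\bigcap_j\pi_{i_j}$ as an intersection of hyperplanes from $\Pi$, notes that each $\pi_{i_j}\cap\stackrel{\circ}{\Delta}_{n,2}$ is an $(n-2)$-dimensional admissible polytope, and declares the statement to follow.

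Your argument is more careful in two respects worth noting. First, by working with the sign vector you invoke not only the $(n-2)$-dimensional admissible polytopes $Q_S=\Delta_{n,2}\cap H_S$ (Theorem~\ref{n-2}) but also the $(n-1)$-dimensional halves $P_S^{\pm}$ (Theorem~\ref{admn-1}); this is what actually pins down a lower-dimensional face $C$ as an \emph{open} cell rather than merely the flat it spans, a point the paper's sketch leaves implicit. Second, you treat the boundary faces separately by induction along the facets $\Delta_{n-1,2}$ and $\Delta_{n-1,1}$, whereas the paper's proof only mentions planes from $\Pi$ and does not address the hyperplanes $x_i=0,1$ explicitly. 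So the two approaches are the same in spirit, but yours fills in exactly the steps the paper elides.
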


\begin{proof}
We have that  $L(\mathcal{A}_{n,2})= L(\mathcal{A}) \cap \Delta _{n,2}$. So, if $C\in L(\mathcal{A}_{n,2})$ it follows that  $C= \cap \pi _{i_1, \ldots i_p}$  for some planes  $\pi _{i_1, \ldots i_p}\in \Pi$.  Since   $\pi _{i_1, \ldots , i_p}\cap \stackrel{\circ}{\Delta} _{n,2}$ is an admissible polytope and the planes from $\Pi$  define  the chamber decomposition $L(\mathcal{A}_{n,2})$ the statement follows.
\end{proof}

Therefore we have:
\begin{prop}\label{chpol}
The chamber decomposition $L(\mathcal{A}_{n,2})$ coincides with the decomposition of $\stackrel{\circ}{\Delta} _{n,2}$ given by the intersections of interiors  all  admissible  polytopes which are inside  $\Delta _{n,2}$ . 
\end{prop}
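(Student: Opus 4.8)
The plan is to deduce the statement directly from the two lemmas just proved together with \thmref{n-2}, by checking that the two subdivisions of $\stackrel{\circ}{\Delta}_{n,2}$ under comparison carry the same system of walls. Write $\GG$ for the decomposition of $\stackrel{\circ}{\Delta}_{n,2}$ whose cells are the nonempty sets $\bigcap_{\sigma\in\omega}\stackrel{\circ}{P}_{\sigma}$ that are disjoint from $\stackrel{\circ}{P}_{\tau}$ for every $\tau\notin\omega$, the index $\sigma$ running over the admissible polytopes that meet $\stackrel{\circ}{\Delta}_{n,2}$; this is precisely the decomposition by intersections of interiors of admissible polytopes. I want to identify $\GG$ with the part of $L(\mathcal{A}_{n,2})$ lying in $\stackrel{\circ}{\Delta}_{n,2}$.

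First I would record one refinement, which is essentially the content of the two preceding lemmas: each $C\in L(\mathcal{A}_{n,2})$ equals the intersection of the interiors of all admissible polytopes containing it, so $C$ lies inside a single cell of $\GG$ and is determined by the collection $\omega$ of those polytopes. Hence $L(\mathcal{A}_{n,2})$ refines $\GG$ on $\stackrel{\circ}{\Delta}_{n,2}$, and the assignment $C\mapsto\omega$ is well defined.

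For the opposite refinement I would compare walls. A wall separating two cells of $\GG$ lies on a facet of some admissible polytope $P_{\sigma}$ meeting $\stackrel{\circ}{\Delta}_{n,2}$; by \thmref{n-2} every interior facet of such a polytope lies on one of the hyperplanes of $\Pi$ from~\eqref{novo}, while facets inside $\partial\Delta_{n,2}$ lie on $\{x_i=0\}$ or $\{x_i=1\}$ by \lemref{polytopbound}. Conversely, each hyperplane of $\Pi$ splits $\Delta_{n,2}$ into two admissible polytopes, as noted in the proof of the first lemma above, so every such hyperplane is a genuine wall of $\GG$. Therefore, inside $\stackrel{\circ}{\Delta}_{n,2}$, the walls of $\GG$ are exactly the traces of the hyperplanes of $\Pi$, and these are precisely the hyperplanes of $\mathcal{A}_{n}$ that meet the interior, since the hyperplanes $\{x_i=0\}$ and $\{x_i=1\}$ touch $\Delta_{n,2}$ only along $\partial\Delta_{n,2}$. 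Two subdivisions of the open convex set $\stackrel{\circ}{\Delta}_{n,2}$ cut out by the same hyperplanes coincide in every dimension, giving $\GG=L(\mathcal{A}_{n,2})\cap\stackrel{\circ}{\Delta}_{n,2}$.

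The main obstacle I anticipate is the bookkeeping in this wall comparison: one must verify both that no facet of an interior admissible polytope can avoid $\Pi$ (secured by \thmref{n-2}) and that no hyperplane of $\Pi$ fails to bound an admissible polytope (secured by the splitting property), and then argue cleanly that agreement of the codimension-one strata forces the full face lattices of the two hyperplane-induced subdivisions of $\stackrel{\circ}{\Delta}_{n,2}$ to agree. Matching the lower-dimensional cells and confirming that $C\mapsto\omega$ is a bijection is then routine, since each lower cell is an intersection of walls already shown to coincide.
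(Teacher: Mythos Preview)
Your proposal is correct and matches the paper's approach: the paper states the proposition as an immediate consequence (``Therefore we have:'') of the two preceding lemmas, with no further argument, so your deduction from those lemmas together with \thmref{n-2} is exactly in the intended spirit. Your additional wall-comparison step for the reverse refinement is more explicit than anything the paper writes out, but it is the natural way to justify why the two lemmas really force the decompositions to coincide rather than merely one refining the other; the paper leaves this implicit.
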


Further on we denote  the chambers from $L(\mathcal{A}_{n,2})$  by $C_{\omega}$, where $\omega$ consist of those admissible sets $\sigma$ such that $C_{\omega}\subset \stackrel{\circ}{P}_{\sigma}$.

\subsection{Moment map and the chamber decomposition}

Let $C_{\omega} \in L(\mathcal{A}_{n,2})$ be a chamber, that is $\dim C_{\omega}=n-1$.  According to Proposition~\ref{chpol}  we have that $C_{\omega} = \cap _{\sigma \in \omega}\stackrel{\circ}{P}_{\sigma}$  and  obviously $\dim P_{\sigma}= n-1$.  It follows from~\cite{BT-2} that the spaces 
$\hat{\mu} ^{-1}(x) = \cup _{\sigma \in \omega} F_{\sigma} \subset G_{n,2}/T^n $ are smooth manifolds and  they are diffeomorphic   for all $x\in C_{\omega}$, that is diffeomorphic to some   manifold $F_{\omega}$. 

 On the other hand, as it is showed in~\cite{BT-2} there exist the  canonical homeomorphisms $h_{\sigma} :  W_{\sigma}/T^{\sigma}\to \stackrel{\circ}{P}_{\sigma}\times F_{\sigma}$  given by $h_{\sigma} = (\hat{\mu}_{\sigma}, p_{\sigma})$,
where $\hat{\mu}_{\sigma} : W_{\sigma}/T^{\sigma} \to \stackrel{\circ}{P}_{\sigma}$ is induced by the moment map $\hat{\mu} : G_{n,2}/T^n \to \Delta _{n,2}$, while $p_{\sigma} : W_{\sigma}/T^{\sigma} \to F_{\sigma}$ is induced by the natural projection $G_{n,2} \to G_{n,2}/(\C ^{\ast})^{n}$.  For the main stratum $W$ we have $W/T^{n}\cong \Delta _{n,2}\times F$, so  it follows that  $F\subset F_{\omega}$.  Let $\hat{C}_{\omega} = \hat{\mu} ^{-1}(C_{\omega})$.
\begin{cor}
For any  $C_{\omega}\in L(\mathcal{A}_{n,2})$ such that  $\dim C_{\omega}=n-1$ there exists canonical homeomorphism
\[
h_{C_{\omega}} : \hat{C}_{\omega} \to   C_{\omega}\times F_{\omega}.
\]
where  the manifold  $F_{C\omega}$ is a compactification of the space $F_{n}$  given by the   spaces $F_{\sigma}$ such that $C_{\omega}\subset P_{\sigma}$
\begin{equation}\label{FC}
F_{\omega}=\bigcup\limits_{C_{\omega}\subset \stackrel{\circ}{P}_{\sigma}} F_{\sigma}.
\end{equation}

\end{cor}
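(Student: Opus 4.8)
The plan is to promote the fibrewise statement recalled just above---that $\hat{\mu}^{-1}(x)\cong F_{\omega}$ for every $x\in C_{\omega}$---to a global product decomposition, by assembling the canonical homeomorphisms $h_{\sigma}=(\hat{\mu}_{\sigma},p_{\sigma})$ stratum by stratum. First I would restrict each $h_{\sigma}$ over the chamber: since $C_{\omega}\subset\stackrel{\circ}{P}_{\sigma}$ for every $\sigma\in\omega$, the map $h_{\sigma}$ cuts down to a homeomorphism $\hat{\mu}_{\sigma}^{-1}(C_{\omega})\to C_{\omega}\times F_{\sigma}$ whose first coordinate is the restriction of the single moment map $\hat{\mu}$ and whose second coordinate $p_{\sigma}$ is the restriction of the single projection induced by $G_{n,2}\to G_{n,2}/(\C^{\ast})^{n}$. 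Because the strata $W_{\sigma}$ are pairwise disjoint, $\hat{C}_{\omega}=\bigcup_{\sigma\in\omega}\hat{\mu}_{\sigma}^{-1}(C_{\omega})$ as a set, and~\eqref{FC} expresses the fibre $F_{\omega}$ as the corresponding union $\bigcup_{\sigma\in\omega}F_{\sigma}$.

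Next I would glue these into one map. The $(\C^{\ast})^{n}$-action contains the $T^{n}$-action, so the algebraic quotient factors as $G_{n,2}\to G_{n,2}/T^{n}\to G_{n,2}/(\C^{\ast})^{n}$ and induces a continuous projection on $\hat{C}_{\omega}$ whose restriction to each piece $\hat{\mu}_{\sigma}^{-1}(C_{\omega})$ is exactly $p_{\sigma}$; denote it $P:\hat{C}_{\omega}\to F_{\omega}$. Setting $h_{C_{\omega}}=(\hat{\mu},P)$, its restriction to each stratum piece coincides with $h_{\sigma}$, so $h_{C_{\omega}}$ is continuous, and over each fixed $x\in C_{\omega}$ it restricts to the canonical bijection $\hat{\mu}^{-1}(x)=\bigcup_{\sigma\in\omega}F_{\sigma}=F_{\omega}$. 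Letting $x$ range over $C_{\omega}$ then shows $h_{C_{\omega}}$ is a continuous bijection onto $C_{\omega}\times F_{\omega}$.

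It remains to upgrade this bijection to a homeomorphism, which I would obtain from properness rather than from a hand computation of the inverse. Since $C_{\omega}$ is open in $\Delta_{n,2}$, the set $\hat{C}_{\omega}$ is open in the compact Hausdorff space $G_{n,2}/T^{n}$, hence locally compact Hausdorff, and so is $C_{\omega}\times F_{\omega}$, where $F_{\omega}$ is the compact manifold furnished by the preceding result. The moment map $\hat{\mu}:G_{n,2}/T^{n}\to\Delta_{n,2}$ is proper, so for compact $K\subset C_{\omega}\times F_{\omega}$ the preimage $h_{C_{\omega}}^{-1}(K)$ is a closed subset of $\hat{\mu}^{-1}(\mathrm{pr}_{C_{\omega}}(K))$, which is compact; thus $h_{C_{\omega}}$ is proper. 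A proper continuous bijection between locally compact Hausdorff spaces is closed, hence a homeomorphism, which is the assertion; canonicity is automatic because $\hat{\mu}$ and $P$ are the intrinsic maps.

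The main obstacle is exactly this last point: the stratum-wise homeomorphisms $h_{\sigma}$ produce product charts of \emph{different} fibre dimensions $\dim F_{\sigma}$, and one must exclude any ``twisting'' of how the lower-dimensional pieces $F_{\sigma}$ are glued along the boundary of the open dense piece $F=F_{n}$ as $x$ varies over $C_{\omega}$. The conceptual reason no twisting occurs is that both coordinates of $h_{C_{\omega}}$ are globally defined intrinsic maps on $\hat{C}_{\omega}$---the moment map and the $(\C^{\ast})^{n}$-orbit projection---so the gluing is constant in $x$; the properness argument packages this cleanly and avoids a chart-by-chart check of continuity of the inverse. The one technical input I would verify carefully against~\cite{BT-2} is that $F_{\omega}$ is genuinely a compact manifold and that $P$ lands continuously in it with the subspace topology inherited from a fibre $\hat{\mu}^{-1}(x_{0})$.
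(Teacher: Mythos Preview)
The paper states this corollary without proof, treating it as an immediate consequence of the two facts recorded just before it: (i) $\hat{\mu}^{-1}(x)\cong F_{\omega}$ for all $x\in C_{\omega}$, and (ii) the existence of the canonical stratum homeomorphisms $h_{\sigma}=(\hat{\mu}_{\sigma},p_{\sigma})$. Your proposal is exactly the spelled-out version of this implicit argument: you assemble the $h_{\sigma}$ over $C_{\omega}$ using the global pair $(\hat{\mu},P)$ and then add a properness step to promote the continuous bijection to a homeomorphism. So in spirit you are doing precisely what the paper intends; the properness argument is a clean way to avoid a direct check of the inverse and is a genuine improvement in rigor over what the text supplies.

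The one point that deserves care---and you flag it yourself---is the continuity of $P$ into $F_{\omega}$ with the \emph{correct} topology. Your sentence ``its restriction to each stratum piece coincides with $h_{\sigma}$, so $h_{C_{\omega}}$ is continuous'' is not sufficient as written: continuity on each stratum of a locally closed stratification does not give continuity on the union. Your alternative justification, that $P$ is the restriction of the global quotient $G_{n,2}/T^{n}\to G_{n,2}/(\C^{\ast})^{n}$, is the right idea, but then $F_{\omega}$ is receiving the subspace topology from the non-Hausdorff quotient $G_{n,2}/(\C^{\ast})^{n}$, and you need that this agrees with its topology as $\hat{\mu}^{-1}(x_{0})\subset G_{n,2}/T^{n}$. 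This is true, and is essentially the content of the trivializations established in~\cite{BT-2} that the paper invokes; but since the paper defers to that reference too, you are on the same footing. In short: your proof is the paper's intended argument made explicit, with the properness step added; the only gap is the one you already identify, and it is filled by the cited prior work rather than by anything internal to this paper.
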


In some sense this can be generalized to  an arbitrary  chamber.  
As noted in~\cite{GM}, perceiving   the spaces $\hat{\mu}^{-1}(x)$,  $\hat{\mu}^{-1}(y)$  as symplectic quotients it follows that they are homeomorphic, that is to some space $F_{\omega}$  , for any chamber $C_{\omega} \in L(\mathcal{A}_{n,2})$ and any two points $x, y\in C$.  When $\dim C_{\omega}=n-2$  we deduce the following:

\begin{lem}
For any  $C_{\omega}\in L(\mathcal{A}_{n,2})$, $\dim C_{\omega}=n-2$ there exists canonical homeomorphism
\[
h_{C_{\omega}} : \hat{C}_{\omega}\to  C_{\omega}\times F_{\omega}.
\]
The space $F_{\omega}$  is a compactification of the space  $F$  and this compactification is given by the spaces $F_{\sigma}$ such that $C_{\omega}\subset  \stackrel{\circ}{P}_{\sigma}$ and  $\dim P_{\sigma}=n-1$  and  a point, that is the  space $F_{\sigma}$ such that  $C_{\omega} \subset  \stackrel{\circ}{P}_{\sigma}$ and $\dim P_{\sigma} =n-2$.  
\end{lem}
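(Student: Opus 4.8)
The plan is to adapt the argument behind the preceding Corollary for $\dim C_\omega = n-1$, the only new feature being a single lower-dimensional stratum sitting over $C_\omega$. First I would determine all admissible polytopes whose open interior contains $C_\omega$. Since $\dim C_\omega = n-2$ and $C_\omega$ lies in $\stackrel{\circ}{\Delta}_{n,2}$, the chamber lies on exactly one of the hyperplanes of $\Pi$, say $\sum_{i\in S}x_i = 1$; lying on a second such hyperplane would force $\dim C_\omega \leq n-3$. By Theorem~\ref{n-2} this hyperplane meets $\stackrel{\circ}{\Delta}_{n,2}$ in an $(n-2)$-dimensional admissible polytope $P_{\sigma_0}$ with $C_\omega\subset \stackrel{\circ}{P}_{\sigma_0}$, and by Proposition~\ref{point} the space $F_{\sigma_0}$ is a single point. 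Any other admissible polytope $P_\sigma$ with $C_\omega\subset \stackrel{\circ}{P}_\sigma$ meets $\stackrel{\circ}{\Delta}_{n,2}$ but does not lie on $\sum_{i\in S}x_i = 1$, so it has $\dim P_\sigma = n-1$; in particular the main stratum's polytope $\Delta_{n,2}$ is among them, whence $F\subseteq F_\omega$.

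Next I would use the symplectic-quotient observation of Goresky--MacPherson~\cite{GM} recalled above to conclude that the fibers $\hat{\mu}^{-1}(x)$ are mutually homeomorphic for $x\in C_\omega$, which defines the common type $F_\omega$ with its canonical decomposition $F_\omega = \bigcup_{C_\omega\subset \stackrel{\circ}{P}_\sigma}F_\sigma$; by the previous paragraph this union is the $(n-1)$-dimensional spaces $F_\sigma$ together with the point $F_{\sigma_0}$. The homeomorphism $h_{C_\omega}$ I would assemble from the canonical homeomorphisms $h_\sigma = (\hat{\mu}_\sigma, p_\sigma) : W_\sigma/T^\sigma \to \stackrel{\circ}{P}_\sigma\times F_\sigma$ of~\cite{BT-2}. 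Restricting each $h_\sigma$ to the part lying over $C_\omega$ yields a product $C_\omega\times F_\sigma$ with the common base $C_\omega$, and since every $\hat{\mu}_\sigma$ projects onto the single induced moment map $\hat{\mu}$, the space $\hat{C}_\omega$ is the fiberwise union of these products, that is $C_\omega\times F_\omega$. The one-orbit stratum $W_{\sigma_0}$ contributes exactly $C_\omega\times\{\mathrm{pt}\}$, so the added point of the compactification sits consistently over the whole chamber.

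For the compactification statement I would argue that $F = F_n$ is dense in $F_\omega$ because the main stratum $W$ is open and dense in $G_{n,2}$, while $F_\omega = \hat{\mu}^{-1}(x)$ is compact, being the fiber of $\hat{\mu}$ over a point of the compact space $G_{n,2}/T^n$. Together these exhibit $F_\omega$ as a compactification of $F$ of the stated form.

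The hard part will be upgrading the stratumwise product structures to a genuine homeomorphism rather than a mere continuous bijection, namely the continuity of $h_{C_\omega}^{-1}$ across the locus where the top-dimensional pieces $F_\sigma$ degenerate onto the single point $F_{\sigma_0}$. This degeneration is precisely the facet relation of Theorem~\ref{univ-(n-2)} and its Corollary, describing how the parameter spaces of the adjacent $(n-1)$-dimensional polytopes collapse onto $P_{\sigma_0}$; combined with the chart-independence furnished by Theorem~\ref{universal}, verifying that this collapse is compatible with the product decomposition over $C_\omega$ is the technical core of the proof.
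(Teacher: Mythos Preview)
Your proposal is correct and follows essentially the same approach as the paper: identify the unique $(n-2)$-dimensional admissible polytope $P_{\sigma_0}$ determined by the single hyperplane of $\Pi$ on which $C_\omega$ lies, invoke Proposition~\ref{point} to see $F_{\sigma_0}$ is a point, and appeal to the Goresky--MacPherson fiber-homeomorphism together with the stratumwise canonical homeomorphisms $h_\sigma$ from~\cite{BT-2}. The paper's own proof is a three-sentence sketch that records exactly these ingredients and no more; your additional care about the gluing and the continuity of $h_{C_\omega}^{-1}$ across the degeneration locus is not addressed there (and your reference to Theorem~\ref{univ-(n-2)} in that last paragraph concerns virtual spaces $\tilde F_\sigma$ rather than the $F_\sigma$ appearing here, so it is not quite the right tool), but this is extra caution rather than a different route.
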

\begin{proof}
It is obvious that   if $C_{\omega} \subset  \stackrel{\circ}{P}_{\sigma}$ and $\dim P_{\sigma}=n-1$ then $F_{\sigma}\subset F_{\omega}$.   Since $C_{\omega}$ is of dimension $n-2$  there exists unique  admissible polytope $P_{\sigma}$, $\dim P_{\sigma}=n-2$ such that $C_{\omega} \subset  \stackrel{\circ}{P}_{\sigma}$, in fact $P_{\sigma}$  is defined by the underlying hyperplane for $C_{\omega}$. Thus, using Proposition~\ref{point}  the statement follows.
\end{proof}  

Since the only interior admissible polytopes are of dimension $n-1$ or $n-2$, ,  for an arbitrary  $C_{\omega}\in L(\mathcal{A}_{n,2})$ i of dimension  $\leq n-3$ we  repeat the argument and  in an analogous way  deduce the following:

\begin{lem}
For any  $C_{\omega}\in L(\mathcal{A}_{n,2})$, $\dim C_{\omega}\leq n-3$ there exists canonical homeomorphism
\[
h_{C_{\omega}} : \hat{C}_{\omega}\to   C_{\omega}\times F_{\omega}.
\]
The space $F_{\omega}$  is a compactification of $F$ given by the spaces $F_{\sigma}$ such that  $C_{\omega} \subset  \stackrel{\circ}{P}_{\sigma}$ and $\dim P_{\sigma}=n-1$ and $q$ points   where $q\geq 2 $ is the number of polytopes $P_{\sigma}$ such that   $C_{\omega} \subset  \stackrel{\circ}{P}_{\sigma}$  and $\dim P_{\sigma}=n-2$.
\end{lem}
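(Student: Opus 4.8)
The plan is to repeat, in spirit, the symplectic reduction argument of Goresky--MacPherson~\cite{GM} that was already invoked for $\dim C_{\omega}=n-1$ and $\dim C_{\omega}=n-2$. Since $C_{\omega}$ is a single relatively open convex cell of $L(\mathcal{A}_{n,2})$, it is connected and contractible, so the spaces $\hat{\mu}^{-1}(x)$, $x\in C_{\omega}$, are all homeomorphic to a fixed space $F_{\omega}$ and the resulting family over $C_{\omega}$ trivializes, yielding the asserted homeomorphism $h_{C_{\omega}}\colon \hat{C}_{\omega}\to C_{\omega}\times F_{\omega}$. With this in hand the whole content reduces to identifying $F_{\omega}$. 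For that I would use the canonical decomposition $F_{\omega}=\bigcup_{\sigma\in\omega}F_{\sigma}$, where $\omega=\{\sigma:\ C_{\omega}\subset \stackrel{\circ}{P}_{\sigma}\}$, together with the fibrewise description coming from the homeomorphisms $h_{\sigma}\colon W_{\sigma}/T^{\sigma}\to \stackrel{\circ}{P}_{\sigma}\times F_{\sigma}$ of~\cite{BT-2}, which over the subcell $C_{\omega}\subset \stackrel{\circ}{P}_{\sigma}$ restrict to $C_{\omega}\times F_{\sigma}$ and patch compatibly because their first components are all induced by $\hat{\mu}$.

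Next I would sort the strata in $\omega$ by dimension. An interior admissible polytope has dimension either $n-1$ or $n-2$, since any admissible polytope of dimension $\le n-3$ lies in $\partial\Delta_{n,2}$. The summands with $\dim P_{\sigma}=n-1$ contribute $F_{\sigma}\subset F_{\omega}$ exactly as before, and $F$ itself occurs among them via the main stratum: its admissible polytope is all of $\Delta_{n,2}$, so $C_{\omega}\subset \stackrel{\circ}{\Delta}_{n,2}$ forces $W\in\omega$. For the summands with $\dim P_{\sigma}=n-2$, Proposition~\ref{point} shows each $F_{\sigma}$ is a single point. These points are genuinely distinct in $F_{\omega}$, being the images of distinct strata and therefore disjoint pieces of the one fibre $\hat{\mu}^{-1}(x)$; at the level of the universal space this is confirmed by Proposition~\ref{emptyint}, since any two such $(n-2)$-polytopes have interiors meeting $\stackrel{\circ}{\Delta}_{n,2}$ along $C_{\omega}$, hence disjoint virtual parameter spaces. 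Thus $F_{\omega}$ is the compactification of $F$ given by the $(n-1)$-dimensional $F_{\sigma}$ together with exactly these extra points.

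The remaining point, which I expect to carry the weight, is the count $q\ge 2$. Here I would use that for an interior chamber $0<x_i<1$ on $C_{\omega}$, so none of the hyperplanes $\{x_i=0\}$, $\{x_i=1\}$ of $\mathcal{A}_{n}$ pass through it; by Proposition~\ref{chpol} the cell $C_{\omega}$ is then cut out solely by hyperplanes of $\Pi$. By Theorem~\ref{n-2} each hyperplane $\pi_{S}=\{\sum_{i\in S}x_i=1\}\in\Pi$ through $C_{\omega}$ produces an $(n-2)$-dimensional admissible polytope $P_{S}=\pi_{S}\cap\Delta_{n,2}$ whose relative interior equals $\pi_{S}\cap \stackrel{\circ}{\Delta}_{n,2}$ and hence contains $C_{\omega}$; these $P_{S}$ are precisely the $(n-2)$-dimensional members of $\omega$. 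Since $\dim C_{\omega}\le n-3$ means $C_{\omega}$ has codimension at least $2$ in the ambient plane $\{\sum x_i=2\}$, it must lie on at least two distinct hyperplanes of $\Pi$, so $q\ge 2$; this is exactly what separates the present case from the single extra point of the $\dim C_{\omega}=n-2$ case. Finally, boundary chambers are handled by the inductive boundary structure $\mu^{-1}(\partial\Delta_{n,2})=n\#G_{n-1,2}\cup n\#\C P^{n-2}$, which reduces them to the already treated chambers for $G_{n-1,2}$ and for $\C P^{n-2}$.
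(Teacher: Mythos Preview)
Your argument is correct and follows exactly the route the paper indicates: the paper's own justification is only the single sentence preceding the lemma, ``Since the only interior admissible polytopes are of dimension $n-1$ or $n-2$, \ldots\ we repeat the argument and in an analogous way deduce the following'', so you have simply filled in what the paper leaves implicit. Your added justification that $q\ge 2$ --- via the codimension of $C_{\omega}$ in $\{\sum x_i=2\}$ forcing at least two hyperplanes of $\Pi$ through it --- and your remark that the $(n-2)$-dimensional contributions are distinct points because distinct strata are disjoint in $G_{n,2}/T^n$, are both sound and make explicit what the paper takes for granted; the closing paragraph on boundary chambers is unnecessary here (the context is interior chambers, with the boundary handled later via $G_{n-1,2}$ and $\C P^{n-2}$) but does no harm.
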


Note that the permutation action of the symmetric group $S_{n}$  on $\R ^{n}$ induces  $S_n$-action on $\mathcal{A}_{n}$, which further gives  $S_n$-action on the chambers    of the  chamber decomposition $L(\mathcal{A}_{n,2})$.  Together with Corollary~\ref{cormain} we obtain:
\begin{cor}
The spaces $F_{\omega}$ and  $\mathfrak{s}(F_{\omega})$ are homeomorphic for any $\mathfrak{s}\in S_{n}$ and any   $C_{\omega} \in  L(\mathcal{A}_{n,2})$.
\end{cor}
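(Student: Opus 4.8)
The plan is to assemble this from the definition of $F_{\omega}$ as a moment-map fibre together with the $S_n$-equivariance already established. First I would recall that for a chamber $C_{\omega}\in L(\mathcal{A}_{n,2})$ the space $F_{\omega}$ is, by the preceding lemmas and the remark that the fibres $\hat{\mu}^{-1}(x)$ are symplectic quotients, the common homeomorphism type of the fibres $\hat{\mu}^{-1}(x) = \mu^{-1}_{n,2}(x)/T^n \subset G_{n,2}/T^n$ taken over the points $x\in C_{\omega}$; these fibres are mutually homeomorphic precisely because $x$ ranges over a single chamber, and this holds for chambers of every dimension.

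Next I would use that the permutation action of $S_n$ on $\R^n$ carries each hyperplane of $\mathcal{A}_n$ (the planes $x_i=0$, $x_i=1$, and the planes~\eqref{novo} of $\Pi$) to another such hyperplane, so it preserves the arrangement $\mathcal{A}_n$ and hence the induced chamber decomposition $L(\mathcal{A}_{n,2})$ of $\Delta_{n,2}$. Consequently, for $\mathfrak{s}\in S_n$ the image $\mathfrak{s}(C_{\omega})$ is again a chamber, which we may write as $C_{\mathfrak{s}(\omega)}$, and $\mathfrak{s}(F_{\omega})$ is by construction the space $F_{\mathfrak{s}(\omega)}$ attached to it.

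The conclusion then follows by choosing any $x\in C_{\omega}$, so that $\mathfrak{s}(x)\in \mathfrak{s}(C_{\omega}) = C_{\mathfrak{s}(\omega)}$ and hence $F_{\mathfrak{s}(\omega)}$ is the homeomorphism type of $\mu^{-1}_{n,2}(\mathfrak{s}(x))/T^n$. Applying the $S_n$-part of Corollary~\ref{cormain}, which is valid for every $n$ (in both the cases $n\neq 2k$ and $n=2k$ the symmetric-group homeomorphisms are included), gives a homeomorphism $\mu^{-1}_{n,2}(x)/T^n \cong \mu^{-1}_{n,2}(\mathfrak{s}(x))/T^n$, that is $F_{\omega}\cong \mathfrak{s}(F_{\omega})$.

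I do not expect a genuine obstacle here, since the statement is a direct assembly of earlier results. The only point requiring care is the identification $\mathfrak{s}(F_{\omega}) = F_{\mathfrak{s}(\omega)}$, i.e.\ checking that applying the homeomorphism $\mathfrak{s}$ of $G_{n,2}/T^n$ to the fibre over $x$ yields exactly the fibre over $\mathfrak{s}(x)$. This is the $S_n$-equivariance of $\hat{\mu}_{n,2}$, which holds because $\mathfrak{s}\in T^{n-1}\rtimes S_n$ commutes with the $T^n$-action and satisfies the commuting diagram~\eqref{diag}.
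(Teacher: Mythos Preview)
Your proof is correct and follows essentially the same approach as the paper, which simply records the corollary as an immediate consequence of Corollary~\ref{cormain} together with the observation that the $S_n$-action on $\R^n$ preserves the arrangement $\mathcal{A}_n$ and hence the chamber decomposition $L(\mathcal{A}_{n,2})$. Your write-up makes explicit the identification $\mathfrak{s}(F_{\omega}) = F_{\mathfrak{s}(\omega)}$ via the $S_n$-equivariance of $\hat{\mu}_{n,2}$, which the paper leaves implicit.
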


Altogether we conclude:

\begin{prop}
A manifold  $F_{\omega}$ is a compactification of the space $F\subset (\C P^{1}_{A})^{N}$  given by the equations~\eqref{relat}. This compactification  is given by the   spaces $F_{\sigma}$ such that $C_{\omega}\subset \stackrel{\circ}P_{\sigma}$. Moreover,  a space $F_{\sigma}$ is a point or it is obtained  by  restricting  the hypersurfaces~\eqref{relat} to some  factors  $(\C P^{1}_{B})^{q}\subset (\C P^{1}_{A})^{N} $, where $B=\{(1:0), (0:1)\}$  and $0\leq q\leq l$, $n-1\leq l\leq N$. 
\end{prop}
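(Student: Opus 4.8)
The plan is to assemble the statement from the three lemmas proved just above, which treat separately the cases $\dim C_{\omega}=n-1$, $\dim C_{\omega}=n-2$ and $\dim C_{\omega}\leq n-3$. In each case the relevant lemma supplies the canonical homeomorphism $h_{C_{\omega}}:\hat{C}_{\omega}\to C_{\omega}\times F_{\omega}$ and describes $F_{\omega}$ as the union of the spaces $F_{\sigma}$ over all admissible $P_{\sigma}$ with $C_{\omega}\subset \stackrel{\circ}{P}_{\sigma}$: the full--dimensional ones, together with one point for each $(n-2)$--dimensional admissible polytope containing $C_{\omega}$. Since the only interior admissible polytopes have dimension $n-1$ or $n-2$, this is exactly the decomposition $F_{\omega}=\bigcup_{C_{\omega}\subset \stackrel{\circ}{P}_{\sigma}}F_{\sigma}$ asserted in the second sentence, which for $\dim C_{\omega}=n-1$ is the content of~\eqref{FC}.

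Next I would verify that $F_{\omega}$ is a compactification of $F$. The homeomorphism $h_{C_{\omega}}$ identifies $F_{\omega}$ with the fibre $\hat{\mu}^{-1}(x)$ for any $x\in C_{\omega}$, which is a closed subspace of the compact orbit space $G_{n,2}/T^n$ and hence compact. Because the main stratum $W$ has admissible polytope $\Delta_{n,2}$, whose interior contains every chamber, $W$ occurs among the strata of the union and contributes its parameter space $F=F_{n}=W/(\C^{\ast})^{n}$, embedded in $(\C P^{1}_{A})^{N}$ by the cubic relations~\eqref{relat} as in~\secref{submain}. Since $W$ is open and dense in $G_{n,2}$, its image $F$ is open and dense in $F_{\omega}$, and together with compactness this yields the compactification claim of the first sentence.

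It remains to describe the individual pieces, splitting into two cases according to $\dim P_{\sigma}$. If $\dim P_{\sigma}=n-2$, then Proposition~\ref{point} gives that $F_{\sigma}$ is a single point. If $\dim P_{\sigma}=n-1$ and $F_{\sigma}$ is not a point, then, using the $S_n$--action to place $\Lambda_{12}$ as a vertex so that $W_{\sigma}\subset M_{12}$, the analysis of~\secref{subarb} applies: the stratum is the restriction of the surfaces~\eqref{orbit} to a coordinate subspace $\C^{J}$ with $\|J\|=l\geq n-1$, whence $F_{\sigma}$ is obtained by restricting the intersection of the cubic hypersurfaces~\eqref{relat} to $q$ of the factors $\C P^{1}_{B}=\C P^{1}\setminus B$, $B=\{(1:0),(0:1)\}$, inside $(\C P^{1}_{A})^{N}$, with the bounds $0\leq q\leq l$ and $n-1\leq l\leq N$ recorded there. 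This is precisely the last sentence.

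The step I expect to be the main obstacle is the density claim in the second paragraph: one must check that the parameter space $F$ of the main stratum is genuinely dense in $F_{\omega}$, i.e.\ that each lower piece $F_{\sigma}$ --- in particular each isolated point contributed by an $(n-2)$--dimensional polytope --- is a true limit of points of $F$ and not an isolated component. This is exactly where the degeneration description of the $F_{\sigma}$ as restrictions of the cubic relations~\eqref{relat} is essential, since it exhibits every such piece as lying in the closure $\bar{F}_{n}$ of $F$ inside $(\C P^{1})^{N}$.
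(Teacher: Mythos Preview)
Your approach is correct and matches the paper's intent: the proposition is introduced in the paper by the phrase ``Altogether we conclude:'' and is given no separate proof, so it is meant precisely as the summary you describe, drawn from the three preceding lemmas on $\dim C_{\omega}=n-1$, $n-2$, $\leq n-3$, together with Proposition~\ref{point} and the description of $F_{\sigma}$ in~\secref{subarb}. Your write-up is in fact more explicit than the paper's own treatment, and your flagged concern about density of $F$ in $F_{\omega}$ is a fair point that the paper leaves implicit.
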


\subsection{Chamber decomposition and  virtual spaces of parameters}
Let $\mathcal{F}_{n}$ be an universal space of parameters for $G_{n,2}$ and   consider the chart $M_{12}$. We assigned to any   stratum  $W_{\sigma}$  from $G_{n,2}$ the virtual space of parameters  $\tilde{F}_{\sigma, 12}$  as described in~\cite{BT-2} and in the previous sections. Moreover, for any $\tilde{F}_{\sigma, 12}$ it is defined the projection $p_{12} : \tilde{F}_{\sigma, 12}\to F_{\sigma}$, where  $F_{\sigma}$ is the space of parameters for the stratum $W_{\sigma}$.


For $C_{\omega} \in L(\mathcal{A}_{n,2})$ from Proposition~\ref{emptyint} one directly deduces the following

\begin{cor}\label{cdis}
Let  $C_{\omega}\in L(\mathcal{A}_{n,2})$. Then $\tilde{F}_{\sigma}\cap \tilde{F}_{\bar{\sigma}}=\emptyset$ for any admissible sets $\sigma , \bar{\sigma}$ such that  $C_{\omega} \subset \stackrel{\circ}{P}_{\sigma}. \stackrel{\circ}{P}_{\bar{\sigma}}$.
\end{cor}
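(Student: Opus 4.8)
The plan is to reduce the statement immediately to Proposition~\ref{emptyint}; the hint that this is a \emph{direct} deduction is accurate, and the only real work is checking that the hypothesis of that proposition is satisfied once we know that the two interiors $\stackrel{\circ}{P}_{\sigma}$ and $\stackrel{\circ}{P}_{\bar{\sigma}}$ both contain the same chamber $C_{\omega}$. First I would record the trivial set-theoretic step: the two containments $C_{\omega}\subset \stackrel{\circ}{P}_{\sigma}$ and $C_{\omega}\subset \stackrel{\circ}{P}_{\bar{\sigma}}$ combine into the single inclusion $C_{\omega}\subset \stackrel{\circ}{P}_{\sigma}\cap \stackrel{\circ}{P}_{\bar{\sigma}}$, and $C_{\omega}$ is nonempty since it is an element of the face lattice $L(\mathcal{A}_{n,2})$.

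The substantive point is that $C_{\omega}$ meets $\stackrel{\circ}{\Delta}_{n,2}$, and this is where the particular choice of the arrangement $\mathcal{A}_{n}$ enters. By construction $\mathcal{A}_{n}$ contains all the hyperplanes $\{x_i=0\}$ and $\{x_i=1\}$, and by Lemma~\ref{polytopbound} these are precisely the hyperplanes cutting out the boundary $\partial \Delta_{n,2}$. Consequently a chamber $C_{\omega}$ lying in the relative interior of an admissible polytope is separated from all the boundary hyperplanes and hence is contained in the open hypersimplex $\stackrel{\circ}{\Delta}_{n,2}$; equivalently, the main admissible polytope $\Delta_{n,2}$, whose relative interior is exactly $\stackrel{\circ}{\Delta}_{n,2}$, is always among the polytopes containing $C_{\omega}$.

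Putting the two steps together gives $\stackrel{\circ}{P}_{\sigma}\cap \stackrel{\circ}{P}_{\bar{\sigma}}\cap \stackrel{\circ}{\Delta}_{n,2}\supseteq C_{\omega}\neq \emptyset$, so the intersection $\stackrel{\circ}{P}_{\sigma}\cap \stackrel{\circ}{P}_{\bar{\sigma}}$ has nonempty intersection with $\stackrel{\circ}{\Delta}_{n,2}$. Applying Proposition~\ref{emptyint} to the pair $P_{\sigma}, P_{\bar{\sigma}}$ then yields $\tilde{F}_{\sigma}\cap \tilde{F}_{\bar{\sigma}}=\emptyset$, which is the assertion. I expect the only delicate point to be the verification that $C_{\omega}$ genuinely meets the open hypersimplex rather than sitting inside $\partial \Delta_{n,2}$; this is settled by the observation above that the defining hyperplanes of $\partial \Delta_{n,2}$ are themselves members of $\mathcal{A}_{n}$, so no chamber contained in a relative interior can lie on the boundary. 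Everything else is a formal manipulation of inclusions followed by a single invocation of Proposition~\ref{emptyint}.
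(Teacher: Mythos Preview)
Your proposal is correct and follows exactly the paper's approach: the paper itself gives no proof beyond the one-line remark that the corollary is ``directly deduced'' from Proposition~\ref{emptyint}, and you have supplied precisely the missing verification that the hypothesis of that proposition---namely that $\stackrel{\circ}{P}_{\sigma}\cap \stackrel{\circ}{P}_{\bar{\sigma}}$ meets $\stackrel{\circ}{\Delta}_{n,2}$---is met because the chamber $C_{\omega}$ lies inside both interiors and inside $\stackrel{\circ}{\Delta}_{n,2}$. One small caution: your sentence ``a chamber $C_{\omega}$ lying in the relative interior of an admissible polytope is separated from all the boundary hyperplanes'' is not literally true for admissible polytopes that themselves sit in $\partial\Delta_{n,2}$, but this is harmless here since the paper (see Proposition~\ref{chpol}) treats $L(\mathcal{A}_{n,2})$ as a decomposition of $\stackrel{\circ}{\Delta}_{n,2}$ and handles boundary chambers separately by induction in Corollary~\ref{ind}.
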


Recall that we defined the charts $M_{ij}$ for $G_{n,2}$ in~\ref{adm-n-2}. Together  with Theorem~\ref{main} we obtain:

\begin{cor}\label{ch-univ}
The union 
\begin{equation}
\mathcal{F}_{n} = \bigcup\limits_{C_{\omega}\subset \stackrel{\circ}{P}_{\sigma}}\tilde{F}_{\sigma}.
\end{equation}
is a disjoint union for any $C_{\omega}\in L(\mathcal{A}_{n,2})$.  
Therefore,  for any chamber $C_{\omega}$ and any chart $M_{ij}\subset G_{n,2}$   it is defined the projection $p_{C_{\omega}, ij} : \mathcal{F}_{n} \to F_{\omega}$ by $p_{C_{\omega}, ij}(y) = p_{\sigma, ij}(y)$. where $y\in \tilde{F}_{\sigma, ij}$. 
\end{cor}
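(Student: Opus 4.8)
The plan is to derive both assertions almost directly from the theorem $\tilde{F}_{x}=\mathcal{F}_{n}$ (for $x\in\stackrel{\circ}{\Delta}_{n,2}$) and from Corollary~\ref{cdis}, the only genuine task being to match the two index sets that appear in the two unions. First I would fix a chamber $C_{\omega}\in L(\mathcal{A}_{n,2})$; since only chambers meeting the interior contribute, I may assume $C_{\omega}\subset\stackrel{\circ}{\Delta}_{n,2}$ and choose a point $x$ in its relative interior, so that $x\in\stackrel{\circ}{\Delta}_{n,2}$. By Proposition~\ref{chpol} the chamber decomposition $L(\mathcal{A}_{n,2})$ coincides with the decomposition of $\stackrel{\circ}{\Delta}_{n,2}$ by intersections of interiors of all interior admissible polytopes; hence the set of admissible polytopes whose (relative) interior contains $x$ is constant as $x$ ranges over the relative interior of $C_{\omega}$, and it equals $\omega=\{\sigma:\,C_{\omega}\subset\stackrel{\circ}{P}_{\sigma}\}$. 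Therefore
\[
\tilde{F}_{x}=\bigcup_{x\in\stackrel{\circ}{P}_{\sigma}}\tilde{F}_{\sigma}=\bigcup_{C_{\omega}\subset\stackrel{\circ}{P}_{\sigma}}\tilde{F}_{\sigma},
\]
and the quoted theorem identifies the right-hand side with $\mathcal{F}_{n}$, which proves the first equality.

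Next I would establish disjointness by invoking Corollary~\ref{cdis} directly. If $\sigma,\bar{\sigma}\in\omega$, then $C_{\omega}\subset\stackrel{\circ}{P}_{\sigma}\cap\stackrel{\circ}{P}_{\bar{\sigma}}$, and since $C_{\omega}\subset\stackrel{\circ}{\Delta}_{n,2}$ this common interior meets $\stackrel{\circ}{\Delta}_{n,2}$; thus Proposition~\ref{emptyint} gives $\tilde{F}_{\sigma}\cap\tilde{F}_{\bar{\sigma}}=\emptyset$. Consequently the union over $\sigma\in\omega$ is disjoint, so $\mathcal{F}_{n}$ is partitioned by the pairwise disjoint subsets $\tilde{F}_{\sigma,ij}$, $\sigma\in\omega$. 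For the final assertion I would glue the canonical projections: for each $\sigma\in\omega$ the map $p_{\sigma,ij}:\tilde{F}_{\sigma,ij}\to F_{\sigma}$ exists, and by~\eqref{FC} its image lies in $F_{\omega}=\bigcup_{C_{\omega}\subset\stackrel{\circ}{P}_{\sigma}}F_{\sigma}$. Since the subsets $\tilde{F}_{\sigma,ij}$ are disjoint and cover $\mathcal{F}_{n}$, every $y\in\mathcal{F}_{n}$ lies in a unique $\tilde{F}_{\sigma,ij}$, so the prescription $p_{C_{\omega},ij}(y)=p_{\sigma,ij}(y)$ is unambiguous and yields a well-defined map $p_{C_{\omega},ij}:\mathcal{F}_{n}\to F_{\omega}$.

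The hard part will be the index-set matching in the first step for chambers of dimension $<n-1$: there a point $x$ in the relative interior also lies in the relative interior of $(n-2)$-dimensional admissible polytopes, so the union defining $\tilde{F}_{x}$ picks up the corresponding virtual spaces, and I must make sure these are precisely the $\tilde{F}_{\sigma}$ with $\sigma\in\omega$. I expect to handle this by the observation that within a single element of $L(\mathcal{A}_{n,2})$ the position relative to each hyperplane of $\mathcal{A}_{n}$ is fixed, so membership in each $\stackrel{\circ}{P}_{\sigma}$ is constant on the chamber; Proposition~\ref{chpol} packages exactly this. The disjointness in the mixed-dimension situations that then arise is already covered by the case analysis (cases 2 and 3) of Proposition~\ref{emptyint}, so no new estimate is needed.
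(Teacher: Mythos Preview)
Your proposal is correct and follows essentially the same approach as the paper: the paper deduces the corollary directly from Corollary~\ref{cdis} (disjointness) together with the theorem $\tilde{F}_{x}=\mathcal{F}_{n}$ for $x\in\stackrel{\circ}{\Delta}_{n,2}$, and you have simply spelled out the index-set matching via Proposition~\ref{chpol} and the gluing of the $p_{\sigma,ij}$ that the paper leaves implicit. (Note that the paper's cross-reference ``Theorem~\ref{main}'' in the line preceding the corollary is evidently a mislabeling; the intended input is the theorem $\tilde{F}_{x}=\mathcal{F}_{n}$, exactly as you use it.)
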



\section{Summary - the orbit space $G_{n,2}/T^n$}

Let  $\stackrel{\circ}{G}_{n,2}/T^n = \hat{\mu}^{-1}(\stackrel{\circ}{\Delta}_{n,2})$  and let $\hat{C}_{\omega} = \hat{\mu}^{-1}(C_{\omega})$ be as before.  
\begin{thm}\label{jedan}
The following disjoint decomposition holds:
 \begin{equation}\label{dcd}
\stackrel{\circ}{G}_{n,2}/T^n\cong \bigcup\limits _{\omega} \hat{C} _{\omega}  \cong   \bigcup\limits_{\omega}(C_{\omega}\times F_{\omega}).
\end{equation}
where the topology on the right hand side is given by the induced moment map $\hat{\mu} : G_{n,2}/T^n \to \Delta _{n,2}$ and the natural projection $G_{n,2}/T^n \to G_{n,2}/(\C ^{\ast})^{n}$.
\end{thm}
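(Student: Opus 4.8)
The plan is to assemble the chamber-wise statements proved in the preceding subsection into a single global decomposition, splitting the argument into a set-theoretic step and a topological step. The genuinely new content — the existence of the canonical homeomorphisms $h_{C_\omega}$ and the identification of the compactifications $F_\omega$ — has already been carried out for chambers of every dimension, so here the work is one of organization and of pinning down the global topology.

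First I would record the underlying combinatorial fact that the chambers partition the interior of the hypersimplex. Since $\mathcal{A}_n$ is a hyperplane arrangement, its faces lying in $\stackrel{\circ}{\Delta}_{n,2}$ are pairwise disjoint and cover $\stackrel{\circ}{\Delta}_{n,2}$, and by Proposition~\ref{chpol} these faces are exactly the chambers $C_\omega\in L(\mathcal{A}_{n,2})$. Hence $\stackrel{\circ}{\Delta}_{n,2}=\bigsqcup_\omega C_\omega$ as a disjoint union. Applying the set-theoretic preimage under the induced moment map $\hat{\mu}$, and using $\stackrel{\circ}{G}_{n,2}/T^n=\hat{\mu}^{-1}(\stackrel{\circ}{\Delta}_{n,2})$ together with $\hat{C}_\omega=\hat{\mu}^{-1}(C_\omega)$, yields the first identification $\stackrel{\circ}{G}_{n,2}/T^n=\bigsqcup_\omega \hat{C}_\omega$; disjointness is inherited directly from the disjointness of the chambers.

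Next I would invoke the chamber-wise homeomorphisms. For a chamber of top dimension $n-1$ this is the Corollary giving $h_{C_\omega}:\hat{C}_\omega\to C_\omega\times F_\omega$, while for $\dim C_\omega=n-2$ and for $\dim C_\omega\leq n-3$ the analogous Lemmas of that subsection supply the same conclusion; thus every chamber carries a canonical homeomorphism $h_{C_\omega}$, with $F_\omega$ the compactification $\bigcup_{C_\omega\subset\stackrel{\circ}{P}_\sigma}F_\sigma$. Taking the disjoint union $h=\bigsqcup_\omega h_{C_\omega}$ produces a bijection from $\bigsqcup_\omega \hat{C}_\omega$ onto $\bigcup_\omega(C_\omega\times F_\omega)$ which is a homeomorphism on each piece, establishing the second identification of~\eqref{dcd} at the level of sets and fibrewise.

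The one point requiring care — and what I expect to be the main obstacle — is the global topology, namely that $h$ is a homeomorphism and not merely a piecewise one, since the fibres $F_\omega$ vary with $\omega$ and must fit together continuously across the shared boundaries of adjacent chambers. Here I would use that each $h_{C_\omega}$ has the form $(\hat{\mu}_\sigma,p_\sigma)$ built from the restrictions of two globally defined continuous maps on $\stackrel{\circ}{G}_{n,2}/T^n$: the first coordinate is the moment map $\hat{\mu}$, and the second is induced by the natural projection $G_{n,2}/T^n\to G_{n,2}/(\C^{\ast})^n$, which on $W_\sigma/T^n$ realizes $p_\sigma:W_\sigma/T^\sigma\to F_\sigma$. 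Consequently the maps $h_{C_\omega}$ are the restrictions of a single continuous map $(\hat{\mu},p)$, so $h$ is continuous; endowing the right-hand side with the topology induced by these two maps, exactly as the statement prescribes, makes $h$ a homeomorphism, its inverse being assembled from the continuous inverses $h_{C_\omega}^{-1}$. This yields the disjoint decomposition~\eqref{dcd}.
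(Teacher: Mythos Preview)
Your proposal is correct and follows essentially the same approach as the paper, which in fact states Theorem~\ref{jedan} without a separate proof block, treating it as a direct summary of the chamber-wise homeomorphisms established in the preceding subsection together with the partition of $\stackrel{\circ}{\Delta}_{n,2}$ into chambers. Your write-up makes explicit exactly the assembly step the paper leaves implicit, including the point that the global topology on the right-hand side is \emph{defined} via $\hat{\mu}$ and the projection to $G_{n,2}/(\C^{\ast})^{n}$, so that the piecewise homeomorphisms $h_{C_\omega}$ automatically glue.
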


The symmetric  group $S_n$ acts on $\stackrel{\circ}{G}_{n,2}/T^n$ by $\mathfrak{s}(C_{\omega}\times F_{\omega}) = \mathfrak{s}(C_{\omega})\times \mathfrak{s}(F_{\omega})$, that is $C_{\omega}\times F_{\omega}$ is homeomorphic to  $\mathfrak{s}(C_{\omega}\times F_{\omega})$, which significantly  simplifies the description  of the elements in the union~\eqref{dcd}. This means that there exists $l=l(n)\geq 4$ and  $\omega _1, \ldots , \omega _l$ - the indices of the $S_n$-action orbits representatives such that  
\begin{equation}\label{int}
\stackrel{\circ}{G}_{n,2}/T^n = \bigcup\limits _{i=1}^{l}\bigcup\limits_{\mathfrak{s}\in S_n}\mathfrak{s}(C_{\omega_{i}}\times F_{\omega _{i}}).
\end{equation}

\begin{ex}
For $n=4$, the chambers in $\stackrel{\circ}{\Delta}_{4,2}$ are of dimensions $0,1,2,3$ and  the $S_4$-action has one orbit in each of these dimensions, that is $l(4)=4$. 
\end{ex}

Altogether,
\begin{equation}\label{all-orb}
G_{n,2}/T^n \cong  \stackrel{\circ}{G}_{n,2}/T^n    \cup (\bigcup\limits_{q=1}^{n} G_{n-1,2}(q)/T^{n-1}) \cup (\bigcup\limits_{q=1}^{n}\Delta ^{n-2}(q)).
\end{equation} 
The topology on the right hand side of~\eqref{all-orb} is defined by the canonical embeddings $\C P^{n-2}(q)\to G_{n,2}$ and $G_{n-1, 2}(q)\to G_{n,2}$, $1\leq q\leq n$.

The universal space of parameters for a $\C P^{n-2}(q)$, $1\leq q\leq n$  is a point.
The    canonical embeddings  $i_{q} : G_{n-1,2}(q)\to G_{n,2}$  are defined by the  inclusions $\C ^{n-1}\to \C ^{n}$, $(z_1, \ldots , z_{n-1}) \to (z_1, \ldots z_{q-1}, 0, z_{q}, \ldots z_{n-1})$, $1\leq q\leq n$. Therefore,  it is straightforward  to relate the universal spaces of parameters for $G_{n,2}$ and $G_{n-1, 2}(q)$, $1\leq q\leq n$.

\begin{prop}\label{un-bound}
The universal space of parameters $\mathcal{F}_{n-1, q}$ for $G_{n-1,2}(q)\subset G_{n,2}$, $1\leq q\leq n$  can be obtained from the universal space of parameters $\mathcal{F}_{n}$ for $G_{n,2}$ by the restriction:
\begin{equation}\label{prq}
\mathcal{F}_{n-1, q} = {\mathcal{F}_{n}}|_{\{(c_{ij}:c_{ij}^{'}), i,j\neq q\}},
\end{equation}
which defines the projection $r_q:  \mathcal{F}_{n}\to \mathcal{F}_{n-1, q}$.
\end{prop}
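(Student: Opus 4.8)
The plan is to reduce to a single Pl\"ucker chart, match the parameters of the embedded Grassmannian with the $q$-free parameters of $\mathcal{F}_n$, and then transport the wonderful compactification across the resulting coordinate projection. Since the $S_n$-action permutes the coordinates of $\C^n$ and the charts $M_{ij}$ compatibly, I may take $q=n$ and fix the chart $M_{12}$ (so $n\geq 3$); then the parameters of $G_{n,2}$ are the ratios $(c_{ij}:c_{ij}')$ with $3\leq i<j\leq n$, while those of $G_{n-1,2}(n)$ are the ratios with $3\leq i<j\leq n-1$.

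First I would make the embedding explicit on the chart. The inclusion $\C^{n-1}\hookrightarrow\C^n$ inserting a zero in the $n$-th slot identifies $i_n(G_{n-1,2})$ with the locus $z_n=w_n=0$ of $M_{12}$, which is $\mu_{n,2}^{-1}$ of the boundary face $\{x_n=0\}\cap\Delta_{n,2}\cong\Delta_{n-1,2}$. Under this identification the main stratum of $G_{n-1,2}$ is the stratum $W_{\sigma}$ of $G_{n,2}$ on which $P^{ij}\neq 0$ for $i,j\leq n-1$ while $P^{in}=0$ for all $i$; by the parametrization \eqref{orbit} its space of parameters is recorded exactly by the ratios $(c_{ij}:c_{ij}')=(z_iw_j:z_jw_i)$ with $i,j\leq n-1$, subject to the cubic relations \eqref{relat} in which no index equals $n$. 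This is precisely the main-stratum parameter space $F_{n-1}$ of $G_{n-1,2}$, with closure $\bar F_{n-1}$.

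Next I would introduce the coordinate projection $\pi\colon(\C P^{1})^{N}\to(\C P^{1})^{N'}$, where $N={n-2\choose 2}$ and $N'={n-3\choose 2}$, forgetting the factors that carry the index $n$. Since the relations \eqref{relat} involving only indices $\leq n-1$ form a subsystem of those cutting out $\bar F_n$, the map $\pi$ sends $\bar F_n$ into $\bar F_{n-1}$; it is surjective because any admissible $n$-free system extends to $\bar F_n$ by prescribing $(c_{3n}:c_{3n}')\in\C P^{1}$ freely and then solving \eqref{relatmain1} for the remaining $n$-indexed ratios. Thus $\pi|_{\bar F_n}$ is a surjection with generic fibre $\C P^{1}$, consistent with $\dim_{\R}F_n-\dim_{\R}F_{n-1}=2$.

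The final and decisive step is to lift $\pi$ to the wonderful compactifications. For any set $J$ of triples drawn from $\{3,\ldots,n-1\}$, the preimage under $\pi$ of the blow-up centre $\bar F_J\subset\bar F_{n-1}$ of $\mathcal{F}_{n-1}$ is cut out by the same equations $(c_{ab}:c_{ab}')=(1:1)$ in the $n$-free coordinates, and hence coincides with the generating subvariety $\bar F_J$ of $\bar F_n$ appearing in Theorem~\ref{universal}; since $\mathcal{F}_n$ is obtained by blowing up exactly these subvarieties, the pulled-back ideal becomes locally principal on $\mathcal{F}_n$, so by the universal property of blow-up the composite $\mathcal{F}_n\to\bar F_n\to\bar F_{n-1}$ lifts to a morphism $r_q\colon\mathcal{F}_n\to\mathcal{F}_{n-1}$. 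As $\mathcal{F}_n$ is compact while $F_{n-1}$ is dense in $\mathcal{F}_{n-1}$, the image $r_q(\mathcal{F}_n)$ is closed and contains $F_{n-1}$, hence equals $\mathcal{F}_{n-1}$; and by the universal-space theorem applied to $G_{n-1,2}$ this wonderful compactification is the universal space $\mathcal{F}_{n-1,q}$ of the embedded copy, which is the content of \eqref{prq}. I expect the genuine obstacle to be exactly this compatibility of the blow-up centres with $\pi$ — confirming that dropping the $n$-indexed coordinates creates no new indeterminacy, so that $r_q$ is an honest morphism rather than merely a rational map — with all the earlier steps amounting to routine bookkeeping in the chart $M_{12}$.
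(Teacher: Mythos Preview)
The paper does not actually prove this proposition: it is stated immediately after the sentence ``Therefore, it is straightforward to relate the universal spaces of parameters for $G_{n,2}$ and $G_{n-1,2}(q)$'' and is used without further justification. So there is no approach in the paper to compare against; your write-up is substantially more detailed than anything the authors supply.

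Your argument is sound in outline. The reduction to $q=n$ via the $S_n$-action, the identification of the main stratum of $G_{n-1,2}(n)$ with the locus $z_n=w_n=0$ in $M_{12}$, and the observation that the cubic relations \eqref{relat} with all indices $\leq n-1$ form a subsystem of those defining $\bar F_n$ are all exactly what one wants, and they justify the coordinate projection $\pi\colon \bar F_n\to \bar F_{n-1}$. The one place to be careful is the last step: wonderful compactification is an \emph{iterated} blow-up along a building set, so invoking ``the universal property of blow-up'' once is a shorthand. What you really need is functoriality of wonderful compactifications under morphisms that respect the building sets, namely that each generating subvariety $\bar F_J\subset \bar F_{n-1}$ (for $J$ a set of triples from $\{3,\dots,n-1\}$) pulls back to a member $\bar F_J$ of the building set for $\mathcal{F}_n$. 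You state this correctly, and it is enough to produce the morphism $r_q$; but if you want the argument to be airtight you should either cite the relevant functoriality statement (e.g.\ from Li~Li's treatment of wonderful compactifications) or run the iterated universal property step by step. This is a point of rigour rather than a gap in the idea.
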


It follows that  all previous constructions apply  to  $\mathcal{F}_{n-1,q}$   and $\Delta _{n-1,2}(q)\subset \partial \Delta _{n,2}$ obtained as $\Delta _{n-1,2}(q) = \Delta _{n,2} \cap \{ x_{q}=0\}$, $1\leq q\leq n$. Denote by  $p^{q}_{C_{\omega}, ij} : \mathcal{F}_{n-1,q} \to F_{\omega}$ the map given by Corollary~\ref{ch-univ}  for   the Grassmannian $G_{n-1,2}(q)$, $1\leq q\leq n$.   In this way  we obtain:

\begin{cor}\label{ind}
For any chamber $C_{\omega}\subset \partial \Delta _{n,2}$ and any chart $M_{ij}$ it is defined the projection $p_{C_{\omega}, ij} : \mathcal{F}_{n} \to F_{\omega}$. If $C_{\omega}\subset \Delta ^{n-1}(q)$ this  projection  maps $\mathcal{F}_{n}$ to a point, while  for  $C_{\omega}\subset \Delta _{n-1,2}(q)$ it is defined by  $p_{C_{\omega}, ij}(y) =( p^{q}_{C_{\omega}, ij}\circ r_q)(y)$.
\end{cor}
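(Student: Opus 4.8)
The plan is to prove the statement by splitting into the two types of face of $\partial\Delta_{n,2}$ that can contain the chamber $C_\omega$, as recorded in~\eqref{all-orb}: a simplex face $\Delta^{n-1}(q)$, which is the moment image of $\C P^{n-2}(q)\subset G_{n,2}$, or a hypersimplex face $\Delta_{n-1,2}(q)=\Delta_{n,2}\cap\{x_q=0\}$, which is the moment image of $G_{n-1,2}(q)$. In each case the idea is to reduce the required projection $p_{C_\omega,ij}$ for the boundary chamber to the interior construction of Corollary~\ref{ch-univ}, but applied to the lower-dimensional stratum rather than to $G_{n,2}$ itself. First I would note that for a boundary chamber the defining recipe of Corollary~\ref{ch-univ}, namely $p_{C_\omega,ij}(y)=p_{\sigma,ij}(y)$ for $y\in\tilde F_{\sigma,ij}$, still makes sense once we know that $\mathcal{F}_n=\bigcup_{C_\omega\subset\stackrel{\circ}{P}_\sigma}\tilde F_\sigma$ is a disjoint union; this follows from Proposition~\ref{emptyint} exactly as in the interior case, so $p_{C_\omega,ij}$ is well defined and it remains only to identify it explicitly.

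For the simplex case $C_\omega\subset\Delta^{n-1}(q)$, the chamber lies over $\C P^{n-2}(q)$, on which the torus acts with complexity zero; hence every stratum is a single orbit and every space of parameters $F_\sigma$ is a point. Consequently the compactification $F_\omega=\bigcup_{C_\omega\subset\stackrel{\circ}{P}_\sigma}F_\sigma$ from~\eqref{FC} is a single point, which forces $p_{C_\omega,ij}:\mathcal{F}_n\to F_\omega$ to be constant. This is consistent with the fact, recorded just before the corollary, that the universal space of parameters of $\C P^{n-2}(q)$ is a point.

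For the hypersimplex case $C_\omega\subset\Delta_{n-1,2}(q)$ the key step is to recognise $C_\omega$ as a genuine chamber of the arrangement $\mathcal{A}_{n-1}$ on $\Delta_{n-1,2}(q)$: intersecting the hyperplanes~\eqref{hyp} with $\{x_q=0\}$ and deleting those that involve $x_q$ yields exactly the arrangement defining $L(\mathcal{A}_{n-1,2})$, so $C_\omega$ belongs to the chamber decomposition of $\Delta_{n-1,2}(q)$. The map $p^q_{C_\omega,ij}:\mathcal{F}_{n-1,q}\to F_\omega$ is then already available from Corollary~\ref{ch-univ} applied to $G_{n-1,2}(q)$, and pre-composing it with the restriction $r_q:\mathcal{F}_n\to\mathcal{F}_{n-1,q}$ of Proposition~\ref{un-bound} produces the asserted formula $p_{C_\omega,ij}=p^q_{C_\omega,ij}\circ r_q$.

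The step I expect to be the main obstacle is the compatibility check that legitimises this composition, namely that $r_q$ intertwines the two decompositions $\mathcal{F}_n=\bigcup\tilde F_\sigma$ and $\mathcal{F}_{n-1,q}=\bigcup\tilde F_{\sigma'}$ into virtual spaces of parameters. Concretely, for an admissible set $\sigma$ of $G_{n,2}$ with $C_\omega\subset\stackrel{\circ}{P}_\sigma$, imposing $x_q=0$ drops from the Pl\"ucker relations~\eqref{relat} precisely the parameters carrying the index $q$, which is exactly the restriction~\eqref{prq} defining $\mathcal{F}_{n-1,q}$; one must verify that the admissible polytope $P_\sigma$ restricts to the admissible polytope $P_{\sigma'}$ of $G_{n-1,2}(q)$ still containing $C_\omega$ in its interior, so that $r_q(\tilde F_{\sigma,ij})\subset\tilde F_{\sigma',ij}$ and the chart-wise projections obey $p_{\sigma,ij}=p^q_{\sigma',ij}\circ r_q$. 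This rests on the inductive description of the admissible polytopes via $\partial\Delta_{n,2}$, and once it is in place both explicit descriptions follow and the chart-independence is inherited from the interior case.
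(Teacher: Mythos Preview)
Your proposal is correct and follows essentially the same approach as the paper: the paper gives no separate proof for this corollary, simply writing ``In this way we obtain'' after noting that the universal space of parameters of $\C P^{n-2}(q)$ is a point and that Proposition~\ref{un-bound} and Corollary~\ref{ch-univ} apply verbatim to $\mathcal{F}_{n-1,q}$ and $\Delta_{n-1,2}(q)$. Your write-up spells out exactly these two cases and makes explicit the compatibility of $r_q$ with the virtual-space decompositions, which the paper subsumes under the phrase ``all previous constructions apply.''
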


So, let us now consider the space
\begin{equation}
U_{n} = \Delta _{n,2}\times \mathcal{F}_{n}.
\end{equation}
 It can be  inductively defined the projection from $U_n$ using the following pattern:
\begin{equation}
U_{n} \to (\stackrel{\circ}{\Delta}_{n,2} \times \mathcal{F}_{n})\cup  (\bigcup\limits_{q=1}^{n}U_{n-1,q})\cup (\bigcup\limits_{q=1}^{n}\Delta ^{n-2}(q)),
\end{equation}
for  $U_{n-1, q} = \Delta _{n-1, 2}(q)\times \mathcal{F}_{n-1,q}$, which is an  identity for $x\in \stackrel{\circ}{\Delta}_{n,2}$, it is given  by  $(x, f)\to  (x, r_{q}(f))$ if  $x\in \Delta _{n-1,2}(q)$, while  $(x, f)\to x$ if  $x\in \Delta ^{n-2}(q)$, where  $1\leq q\leq n$. 


Therefore, from Corollary~\ref{ch-univ}, Theorem~\ref{jedan} and Corollary~\ref{ind} we obtain: 

\begin{thm}\label{prefin}
 For any chart $M_{ij}\subset G_{n,2}$  the map
\begin{equation}
G : U_{n} \to G_{n,2}/T^n, \;\; G (x, y) = h_{C_{\omega}}^{-1}(x, p_{C_{\omega}, ij}(y)) \;\; \text{if and only if}\;\;  x\in C_{\omega}
\end{equation}
is correctly defined.
\end{thm}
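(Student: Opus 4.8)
The plan is to verify that every ingredient in the formula $G(x,y)=h_{C_\omega}^{-1}(x,p_{C_\omega,ij}(y))$ is unambiguous, and that the resulting point of $G_{n,2}/T^n$ is independent of the chart $M_{ij}$ used to write it down. First I would observe that the clause ``if and only if $x\in C_\omega$'' is meaningful: the chamber decomposition $L(\mathcal{A}_{n,2})$ partitions $\Delta_{n,2}$ into pairwise disjoint relatively open chambers, so each $x\in\Delta_{n,2}$ lies in a \emph{unique} chamber $C_\omega$, whence the index $\omega$ is a function of $x$ alone. This in turn determines both the compactified parameter space $F_\omega$ and the homeomorphism $h_{C_\omega}\colon\hat{C}_\omega\to C_\omega\times F_\omega$ attached to $x$.

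Next, for a fixed chart $M_{ij}$ and a fixed interior chamber $C_\omega$, I would show that $p_{C_\omega,ij}$ is single-valued on all of $\mathcal{F}_n$. The defining rule $p_{C_\omega,ij}(y)=p_{\sigma,ij}(y)$ for $y\in\tilde{F}_{\sigma,ij}$ is unambiguous precisely because the virtual spaces $\tilde{F}_{\sigma,ij}$ with $\sigma\in\omega$ simultaneously cover and are pairwise disjoint: the covering is the content of the theorem asserting $\tilde{F}_x=\mathcal{F}_n$, so that $\bigcup_{\sigma\in\omega}\tilde{F}_{\sigma,ij}=\mathcal{F}_n$, and the disjointness is Corollary~\ref{cdis}. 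Hence every $y\in\mathcal{F}_n$ lies in exactly one $\tilde{F}_{\sigma,ij}$ with $\sigma\in\omega$, and $p_{C_\omega,ij}(y)=p_{\sigma,ij}(y)\in F_\sigma\subseteq F_\omega$ is well defined; this is exactly Corollary~\ref{ch-univ}. Since $h_{C_\omega}$ is a homeomorphism onto $C_\omega\times F_\omega$ and the pair $(x,p_{C_\omega,ij}(y))$ lies in $C_\omega\times F_\omega$, the point $h_{C_\omega}^{-1}(x,p_{C_\omega,ij}(y))\in\hat{C}_\omega\subset G_{n,2}/T^n$ is determined, so $G(x,y)$ is well defined for each fixed chart. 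For $x\in\partial\Delta_{n,2}$ the same reasoning applies after replacing $p_{C_\omega,ij}$ by the boundary projection of Corollary~\ref{ind}: if $C_\omega$ lies in a simplex face it collapses $\mathcal{F}_n$ to a point, and if it lies in a hypersimplex face $\Delta_{n-1,2}(q)$ it factors as $p^{q}_{C_\omega,ij}\circ r_q$ through the restriction $r_q\colon\mathcal{F}_n\to\mathcal{F}_{n-1,q}$ of Proposition~\ref{un-bound}. Thus the statement for boundary chambers follows inductively from the statement for $G_{n-1,2}$.

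The main obstacle is chart-independence: I must check that $p_{C_\omega,ij}(y)=p_{C_\omega,kl}(y)$ as elements of $F_\omega$ for any two charts $M_{ij}$ and $M_{kl}$. The unique indices $\sigma,\sigma'\in\omega$ with $y\in\tilde{F}_{\sigma,ij}$ and $y\in\tilde{F}_{\sigma',kl}$ may well differ, since the two charts decompose $\mathcal{F}_n$ by different systems of virtual spaces; yet each projection $p_{\sigma,ij}$ and $p_{\sigma',kl}$ is induced by the one canonical map $G_{n,2}\to G_{n,2}/(\C^{\ast})^n$, and by Theorem~\ref{universal} the transition homeomorphism $f_{ij,kl}$ extends to a self-homeomorphism of $\mathcal{F}_n$ carrying the chart-$M_{ij}$ description of each stratum to its chart-$M_{kl}$ description. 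The key step is to verify that these extensions intertwine the two systems of projections with the gluing $F_\omega=\bigcup_{\sigma\in\omega}F_\sigma$, so that the two values represent one and the same point of $F_\omega$; here I would invoke the chart-independence of the assignment $W_\sigma\mapsto\tilde{F}_\sigma$ established via Proposition~9.11 of~\cite{BT-2}, together with the compatibility of the maps $p_\sigma$ with the transition functions. Once this compatibility is secured, $G$ is a genuinely chart-free map $U_n\to G_{n,2}/T^n$, and combining the three verifications with the disjoint decomposition of Theorem~\ref{jedan} completes the argument.
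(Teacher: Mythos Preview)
Your proposal is correct and follows essentially the same route as the paper: the paper's proof consists of the single sentence ``Therefore, from Corollary~\ref{ch-univ}, Theorem~\ref{jedan} and Corollary~\ref{ind} we obtain [the theorem],'' and you have simply unpacked what each of these citations contributes---the disjointness and covering of the $\tilde F_\sigma$ for $\sigma\in\omega$ giving a single-valued $p_{C_\omega,ij}$, the homeomorphism $h_{C_\omega}$ supplying the inverse, and the inductive boundary treatment. Your added paragraph on chart-independence goes somewhat beyond what the paper spells out here, but it is consistent with (and draws on) the chart-independence of the virtual spaces already established earlier in the paper via Proposition~9.11 of~\cite{BT-2} and Theorem~\ref{universal}, after which the paper drops the chart subscript on $\tilde F_\sigma$; so this is a reasonable elaboration rather than a departure.
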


Then from Theorem~\ref{jedan}, formula~\eqref{all-orb} and Proposition~\ref{un-bound} we deduce:
\begin{thm}\label{fin}
The map $G$ is a continuous surjection and the orbit space $G_{n,2}/T^n$ is homeomorphic to the quotient of the space $U_{n}$ by the map $G$.
\end{thm}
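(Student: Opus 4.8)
The plan is to verify the three claims of Theorem~\ref{fin}---that $G$ is continuous, that $G$ is surjective, and that the induced map on fibres is a homeomorphism---in turn, using the structural results already established; recall that $G$ itself is well defined by Theorem~\ref{prefin}.

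\textbf{Surjectivity.} First I would appeal to the global decomposition of the orbit space. By Theorem~\ref{jedan} and the boundary formula~\eqref{all-orb} we have $G_{n,2}/T^{n}=\bigcup_{\omega}\hat{C}_{\omega}$, and for each chamber $h_{C_{\omega}} : \hat{C}_{\omega}\to C_{\omega}\times F_{\omega}$ is a homeomorphism. As $\Delta_{n,2}=\bigcup_{\omega}C_{\omega}$, surjectivity of $G$ reduces to surjectivity of every projection $p_{C_{\omega},ij} : \mathcal{F}_{n}\to F_{\omega}$. This is immediate from Corollary~\ref{ch-univ}, where $\mathcal{F}_{n}=\bigsqcup_{C_{\omega}\subset\stackrel{\circ}{P}_{\sigma}}\tilde{F}_{\sigma}$ is a disjoint union and $p_{C_{\omega},ij}$ restricts on each block to the canonical surjection $p_{\sigma,ij} : \tilde{F}_{\sigma}\to F_{\sigma}$; since $F_{\omega}=\bigcup_{\sigma}F_{\sigma}$ by~\eqref{FC}, every $(x,f)\in C_{\omega}\times F_{\omega}$ is attained, and hence so is every point of $G_{n,2}/T^{n}$.

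\textbf{Continuity.} This is the main obstacle, and I expect the real work to lie here. Since there are only finitely many chambers, I would argue by convergence: given $(x_{k},y_{k})\to(x_{0},y_{0})$ in $U_{n}$, after passing to a subsequence all $x_{k}$ lie in a single top-dimensional chamber $C_{\omega}$ with $x_{0}\in\overline{C}_{\omega}$, while $x_{0}$ itself belongs to a face-chamber $C_{\omega_{0}}$. The point is then to show that $h_{C_{\omega}}^{-1}(x_{k},p_{C_{\omega},ij}(y_{k}))$ converges to $h_{C_{\omega_{0}}}^{-1}(x_{0},p_{C_{\omega_{0}},ij}(y_{0}))$, i.e.\ that the piecewise formula for $G$ matches up as $x$ degenerates from a chamber into one of its walls. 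When the wall lies in $\stackrel{\circ}{\Delta}_{n,2}$, inside an $(n-2)$-dimensional admissible polytope $P_{\sigma'}$, this matching is exactly the facet relation $\tilde{F}_{\sigma}\subseteq\tilde{F}_{\sigma'}$ of Theorem~\ref{univ-(n-2)} and its Corollary, which glues the virtual parameter spaces of the two adjacent $(n-1)$-dimensional strata along the common $(n-2)$-dimensional one; the disjointness of Proposition~\ref{emptyint} and Corollary~\ref{cdis} guarantees that $p_{C_{\omega},ij}$ is single-valued on each block, so no ambiguity arises. When the wall lies in $\partial\Delta_{n,2}$, the matching is provided instead by the restriction $r_{q} : \mathcal{F}_{n}\to\mathcal{F}_{n-1,q}$ of Proposition~\ref{un-bound} together with the factorisation $p_{C_{\omega},ij}=p^{q}_{C_{\omega},ij}\circ r_{q}$ of Corollary~\ref{ind}, so that continuity across boundary walls follows inductively from the statement for $G_{n-1,2}$. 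The chart-independence that makes the formula $G(x,y)=h_{C_{\omega}}^{-1}(x,p_{C_{\omega},ij}(y))$ unambiguous is supplied by Theorem~\ref{universal}: the transition homeomorphisms between charts $M_{ij}$ extend to $\mathcal{F}_{n}$, so the choice of $ij$ does not affect the limit. The explicit computation for $G_{5,2}$ in Section~6---where $\tilde{F}_{14}\subset\tilde{F}_{x}$ is checked for every interior $x$ by passing between $M_{12}$ and $M_{13}$---is the template, and the general case repeats precisely this pattern, chamber by chamber.

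\textbf{The quotient homeomorphism.} Once $G$ is a continuous surjection, its fibres define the equivalence relation, and the universal property of the quotient topology yields a continuous bijection $\overline{G} : U_{n}/{\sim}\to G_{n,2}/T^{n}$. To promote $\overline{G}$ to a homeomorphism I would invoke compactness: $U_{n}=\Delta_{n,2}\times\mathcal{F}_{n}$ is compact, since $\Delta_{n,2}$ is a compact polytope and $\mathcal{F}_{n}$ is the compact (wonderful) compactification of $F_{n}$, so $U_{n}/{\sim}$ is compact; and $G_{n,2}/T^{n}$ is Hausdorff, being the quotient of the compact Hausdorff manifold $G_{n,2}$ by the compact torus $T^{n}$. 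A continuous bijection from a compact space to a Hausdorff space is automatically a homeomorphism, and this finishes the proof. I expect that the $S_{n}$-equivariance recorded earlier can then be carried along for free, since every ingredient---the chambers, the spaces $F_{\omega}$, and the projections---has already been shown to be compatible with the symmetric group action.
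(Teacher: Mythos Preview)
Your proposal is correct and follows essentially the same approach as the paper: surjectivity from the chamber decomposition of Theorem~\ref{jedan} and Corollary~\ref{ch-univ}, continuity across interior walls from the facet inclusions of Theorem~\ref{univ-(n-2)} and across $\partial\Delta_{n,2}$ from Proposition~\ref{un-bound} and Corollary~\ref{ind}, with the $G_{5,2}$ computation serving as the template---the paper itself states only that the argument ``proceeds in an analogous way as the proof of Theorem~11.1 in~\cite{BT-2}''. Your compact-to-Hausdorff step for the final homeomorphism is a clean addition that the paper leaves implicit.
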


The more explicit  proof proceeds in an analogous  way as the proof of Theorem 11.1  in~\cite{BT-2}, which describes  the orbit space $G_{5,2}/T^5$.

Victor M.~Buchstaber\\
Steklov Mathematical Institute, Russian Academy of Sciences\\ 
Gubkina Street 8, 119991 Moscow, Russia\\
E-mail: buchstab@mi.ras.ru
\\ \\ 

Svjetlana Terzi\'c \\
Faculty of Science and Mathematics, University of Montenegro\\
Dzordza Vasingtona bb, 81000 Podgorica, Montenegro\\
E-mail: sterzic@ucg.ac.me 

\end{document}